\documentclass[a4paper,11pt,reqno]{amsart}

\usepackage[T1]{fontenc}
\usepackage{lmodern}
\usepackage{microtype}

\usepackage{mathtools,amssymb,amsthm}
\usepackage{mathrsfs,esint}
\usepackage{enumitem}
\usepackage{geometry}

\usepackage{xcolor}
\usepackage{tikz}

\usepackage{mleftright}
\mleftright % redefine \left as \mleft and \right as \mright.

\usepackage[backend=biber,
style=alphabetic,
sorting=nyt,
maxbibnames=10,
maxcitenames=10,
giveninits=true,
doi=false,
url=false]{biblatex}

\renewbibmacro{in:}{%
  \ifentrytype{article}{}{\printtext{\bibstring{in}\intitlepunct}}}

\AtBeginBibliography{%
}

\usepackage{hyperref}

\hypersetup{
  colorlinks=true,
  linkcolor=blue,      % internal links: sections, theorems, equations
  citecolor=blue,      % \cite
  urlcolor=blue,       % \url, \href
  filecolor=blue,
  breaklinks=true,     % allow links to break across lines
  bookmarksnumbered=true,
  pdftitle={Optimal regularity for vectorial, higher order and non-minimizing Bernoulli problems},
  pdfauthor={De Philippis, Hirsch, Nahon},
}

\newcommand \Om{\Omega}

\newcommand \eps{\epsilon}

\newcommand \B{\mathbb{B}}

\newcommand \ov{\overline}
\newcommand \M{\mathscr{M}}
\newcommand \N{\mathbb{N}}
\newcommand \Z{\mathbb{Z}}
\newcommand \C{\mathbb{C}}
\newcommand \R{\mathbb{R}}
\newcommand \loc{\mathrm{loc}}

\newcommand \Normr[1]{\left\Vert #1\right\Vert_{L^2(\B_1\setminus\B_{\frac{1}{2}})}}
\newcommand \Norm[1]{\left\Vert #1\right\Vert_{L^2(\B_1)}}
\newcommand \Ho[1]{\mathscr{H}_{#1}}

\newcommand{\iu}{\mathrm{i}}             % Imaginary unit

\numberwithin{equation}{section}

\newtheorem{theorem}{Theorem}
\newtheorem{corollary}[theorem]{Corollary}
\newtheorem{lemma} [theorem]{Lemma}
\newtheorem{proposition} [theorem]{Proposition}

\theoremstyle{definition}
\newtheorem{definition}[theorem] {Definition}

\newtheorem{remark} [theorem]{Remark}

\begin{document}

\title[Optimal Regularity for Bernoulli problems]{Optimal regularity for vectorial, higher order and non-minimizing Bernoulli problems}

\author[G.De Philippis]{Guido De Philippis}
\address{Dipartimento di Matematica ``Tullio Levi-Civita'', Via Trieste 63, Torre Archimede, Padova, Italy}
\email{guido@math.unipd.it}

\author[J.Hirsch]{Jonas Hirsch}
\address{Universität Leipzig, Mathematisches Institut, Augustusplatz 10, 04109 Leipzig, Germany}
\email{hirsch.jonas@math.uni-leipzig.de}

\author[M.Nahon]{Mickaël Nahon}
\address{Univ. Grenoble Alpes, CNRS, Grenoble INP, LJK, 38000 Grenoble, France}
\email{mickael.nahon@univ-grenoble-alpes.fr}

\subjclass[2020]{Primary 35R35; Secondary 35B65, 35J48, 49Q10, 76D07, 35P15}

\keywords{Bernoulli free boundary problem, optimal regularity, Lipschitz  regularity, vectorial free boundary problems, higher order problem,shape optimization}

\begin{abstract}
We prove the Lipschitz regularity of solutions for a wide class of generalized Bernoulli free boundary problems, in vectorial, higher order, and non-minimizing settings. Our method does not rely on notions of viscosity solutions or comparison methods, which allows us to reach the optimal regularity for Bernoulli-type problems associated to elliptic operators which do not satisfy any maximum principle, namely the elasticity, biharmonic and Stokes equation. With a similar method we obtain the optimal Lipschitz regularity for stationary, non-minimizing solutions of the standard Bernoulli problem in two dimensions.
% In a second stage we study the non-degeneracy properties of minimizers: we prove that the non-degeneracy property of solutions does not hold without some additional hypotheses on the functional.
\end{abstract}

\maketitle

\tableofcontents

\section{Introduction}

\subsection{Background}

Several questions in pure and applied mathematics lead to the study of local minimizers and of
stationary solutions of Bernoulli-type energies
\begin{equation}\label{e:bernoullintro}
\int_{\B_1} Q(\nabla^nu)+\chi_{\nabla^m u\ne 0},
\end{equation}
where $\B_1$ is the  unit ball of  $\R^d$, \(u\in H^n(\B_1,\R^p)\), \(m\le n\), and \(\nabla^\ell u\in
(\R^d)^{\otimes \ell}\otimes \R^p\) denotes the vector of the \(\ell\)-th derivatives of
\(u\). Here we say $u\in H^n(\B_1,\R^p)$ is a \textit{local minimizer} of a functional $E:H^n(\B_1,\R^p)\to \R$ when $E(u)\leq E(u+v)$ for all $v\in H^n_0(\B_1,\R^p)$, and a \textit{stationary solution} when $\left.\frac{d}{dt}\right|_{t=0}E(u\circ (\mathrm{id}+t\xi))=0$ for all $\xi\in\mathcal{C}^\infty_c(\B_1,\R^d)$.

When \(n=1\), \(p=1\), and \(Q\) is the square of the norm, the functional reduces to the
Bernoulli energy
\begin{equation}\label{eq_AC}
\int_{\B_1} |\nabla u|^2+\chi_{u\ne 0}.
\end{equation}
If one restricts the competitors to non-negative functions, one obtains the so-called
\emph{one-phase Bernoulli problem}, whose optimality conditions are formally given by the
overdetermined system
\begin{equation*}\label{eq_Bernoulli}
\begin{cases}
\Delta u =0 &\text{ in }\B_1\cap\{u>0\},\\
u =0 &\text{ in }\B_1\cap\partial\{u>0\},\\
\partial_\nu u =1 &\text{ in }\B_1\cap\partial\{u>0\},
\end{cases}
\end{equation*}
where $\partial_\nu$ is the inward normal derivative. The study of this overdetermined problem goes back to~\cite{Beurling}, and it arises
naturally in several questions related to fluid dynamics, shape optimization, capillarity surfaces, and potential theory.

The regularity of local minimizers of~\eqref{eq_AC} was first studied in the foundational
work~\cite{AC81}, where the authors show that minimizers are locally Lipschitz, that they
have at least linear growth near every boundary point, and that every sufficiently flat
part of the boundary is analytic. Subsequent works~\cite{W99,CJK01,DJ09,JS15} show that the
free boundary \(\partial\{u>0\}\) is the union of an embedded analytic hypersurface and of
a singular set of Hausdorff dimension at most \(d-d^*\), where \(d^*\in\{5,6,7\}\) is a
critical dimension characterized by the existence of non-flat homogeneous solutions in
\(\R^{d^*}\).

If one does not assume that \(u\) has constant sign, one obtains instead a special case of
the two-phase free boundary problem, sometimes called the \emph{symmetric} two-phase
problem. Here as well, solutions are locally Lipschitz with at least linear growth at the
boundary~\cite{ACF84}, and the free boundary was described in the subsequent series of
papers~\cite{SV19,DSV21,DSV25,FV26}.

Note that establishing the optimal Lipschitz regularity is the first key ingredient in establishing the free boundary regularity. Indeed   it allows for a blow-up analysis around a free boundary point \(z\) by studying the limit points of the  sequence \(r^{-1}u(z+r\,\cdot\,)\).

Other shape optimization problems lead naturally to more general functionals of the form~\eqref{e:bernoullintro}. For instance:
\begin{itemize}
\item In the study of P{\'o}lya and Szeg{\"o}'s conjecture on the optimal shape for the critical buckling load of a clamped plate~\cite{AshbBucu03}, as well as the optimal shape for its fundamental eigenvalue~\cite{AB95,N95}, one is led to consider local minimizers of
\begin{equation}\label{eq:buckledplate}
\int_{\B_1} |\nabla^2u|^2+\chi_{u\ne 0}.
\end{equation}
See also the introduction of~\cite{LN25} for a more detailed account.
\item The study of optimal profiles in viscous flows~\cite{P73,R95} leads naturally to
vectorial problems with a \emph{differential constraint}, namely to the local minimizers of
\begin{equation}\label{eq:stokes}
\int_{\B_1} |\nabla u |^2+\chi_{u\ne 0}
\end{equation}
in the class of \emph{divergence-free} vector fields \(H^1_{\textup{div}}(\B_1,\R^d)\).
In dimension \(2\) the introduction of the stream function \(u=\nabla^\perp \psi\) reduces this
problem to minimizers of
\begin{equation*}\label{eq:streamfunctionminimizers}
\int_{\B_1} |\nabla^2\psi|^2+\chi_{\nabla\psi\ne 0}.
\end{equation*}
\item The study of the optimal domain for the first Lamé eigenvalue and for the
Korn--Poincaré constant leads to energies of the form
\begin{equation}\label{eq_lame}
\int_{\B_1} \mathbb{A}e(u):e(u)+\chi_{u\neq 0},
\end{equation}
where $u\in H^1(\B_1,\R^d)$,
\[
\mathbb{A}S=2\mu S+\lambda\,\mathrm{Tr}(S)I_d,
\qquad e(u)=\frac{\nabla u+\nabla u^T}{2};
\]
for some Lamé parameters $(\lambda,\mu)\in\R^2$ such that $\mu>0$, $\lambda+2\mu>0$, see~\cite{HLP24} and~\cite{F25}.

\item When one minimizes the Bernoulli energy in a restricted class, for instance in the class of simply
connected domains, one often has to work with \emph{stationary solutions} with respect to
inner variations, namely with functions that satisfy
\begin{equation}\label{eq:stationary}
\left.\frac{d}{dt}\right|_{t=0}\int_{\B_1}\left(|\nabla(u\circ (\mathrm{id}+t\xi))|^2
+\chi_{u\circ (\mathrm{id}+t\xi)\ne 0}\right)=0
\end{equation}
for all \(\xi\in\mathcal{C}^\infty_c(\B_1,\R^d)\). Stationary solutions have recently been
studied in~\cite{KW25} under the \emph{a priori} assumption of Lipschitz regularity.
\end{itemize}

Very little is known for the minimizers of these problems, neither for the regularity of
the functions nor for that of the free boundaries. To the best of our knowledge, the best
results currently available are the following.
\begin{itemize}
\item Minimizers of~\eqref{eq_lame} for $\mu>0$, $\lambda+\mu>0$ belong to $\mathcal{C}^\alpha_\loc(\B_1,\R^d)$ for every $\alpha\in (0,1)$ by  \cite{HLP24} in dimension $2$, $3$ and \cite{F25} in any dimension.

\item Fourth-order problems in the spirit of~\eqref{eq:buckledplate} were studied in~\cite{DipiKaraVald21}, where several results are proved \emph{conditionally on} the
\(C^{1,1}\) regularity of the minimizer, and in \cite{M22}. In~\cite{LN25}, the third author and Lamboley gave
a fine description of the free boundary points of two-dimensional minimizers of~\eqref{eq:buckledplate}. In particular, they classified completely the free boundary points
around which the solution is locally \(C^{1,1}\), but they left open whether other types of
free boundary point can occur; see also~\cite{GM24,GM26} where a partial classification of global minimizers is given.

\item In~\cite{F25}, minimizers of~\eqref{e:bernoullintro} were shown to belong to
\(C^{n-1,\alpha}\) for every \(\alpha\in(0,1)\), provided \(Q\) is elliptic. The same paper
shows that minimizers of~\eqref{eq:stokes} belong to \(C^{\alpha}\) for every
\(\alpha\in(0,1)\).
\end{itemize}

In particular, it was not known whether local minimizers of~\eqref{e:bernoullintro} are of class
\(W^{n,\infty}_\loc\).  Note  that this is  the optimal expected regularity, since \(\nabla^nu\) should
jump across the free boundary. Furthermore, as explained in the next section, this is the critical regularity that allows to  consider blow-up limits,  which is the first step  in the study of free boundary regularity.

To the best of our knowledge, every proof of the optimal regularity for the classical
Bernoulli problem~\eqref{eq_AC} uses one of the following two facts.
\begin{itemize}
\item For the one-phase problem, the sign of the solution together with some form of the
maximum principle; see~\cite[Ch.~3]{V23} for a review of the known techniques.
\item For the two-phase and the vectorial problems, the Alt--Caffarelli--Friedman
monotonicity formula~\cite[Th.~5.3]{ACF84}; see~\cite{BHM,DavidToro15,BMPV}.
\end{itemize}
Neither ingredient is available for the functionals~\eqref{e:bernoullintro}. Solutions of high order equations  satisfy no maximum principle and no analogue of the monotonicity formula is known  when the functional is not the Dirichlet energy. Therefore, in order to prove optimal regularity one has to devise a completely different strategy.

\subsection{Main results}

Our first main result is the proof of  the optimal regularity for  local minimizers of~\eqref{e:bernoullintro}  and  of the
Stokes--Bernoulli problem~\eqref{eq:stokes}. We also show optimal regularity in \(2\)-d  for the stationary
solutions of~\eqref{eq:stationary}. All these results are consequences of the abstract
Theorem~\ref{th_main} below, which we now state.

For \(p\in\N^*\) and \(\phi:\B_1\to\R^p\) measurable, we define the associated
\emph{boundary} as
\begin{equation*}\label{eq_Fu}
\mathcal{F}_\phi=\left\{z\in\B_1:\forall r\in (0,1-|z|),\
|\B_{z,r}\cap\{\phi=0\}|\cdot|\B_{z,r}\cap\{\phi\neq 0\}|>0\right\}
\end{equation*}
where $\B_{z,r}:=z+r\B_1$.
\begin{theorem}\label{th_main}
For any \(K\geq 0\) and \(d\in\N^*\) there exist \(C,\alpha>0\) such that the following
holds. Let \(\phi\in L^2(\B_1,\R^p)\) for some \(p\in\N^*\), assume that
\(\mathcal{F}_\phi\cap\B_1\neq \emptyset\), and assume that for every ball
\(\B_{z,r}\subset\B_1\) there exists \(\phi_{\B_{z,r}}\in L^2(\B_{z,r},\R^p)\) such that
\begin{equation}\label{eq_hyp1}
\int_{\B_{z,r}}|\phi-\phi_{\B_{z,r}}|^2\leq |\B_{z,r}\cap\{\phi=0\}|
\end{equation}
and
\begin{equation}\label{eq_hyp2}
r\Vert \nabla \phi_{\B_{z,r}}\Vert_{L^\infty(\B_{z,\frac{1}{2}r})}
\leq K\left(\fint_{\B_{z,r}}\left|\phi_{\B_{z,r}}
-\fint_{\B_{z,r}}\phi_{\B_{z,r}}\right|^2\right)^\frac{1}{2}.
\end{equation}
Then for almost every \(z\in\B_{\frac{1}{2}}\),
\[|\phi(z)|\leq C\left(1+\mathrm{dist}(z,\mathcal{F}_\phi)^\alpha
\Vert \phi\Vert_{L^2(\B_1)}\right).\]
\end{theorem}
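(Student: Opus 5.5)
We prove the statement by an iteration on dyadic balls, using $\phi_{\B_{z,r}}$ as the ``harmonic replacement'' and \eqref{eq_hyp1} as the price paid for it. For a ball $B=\B_{z,r}\subset\B_1$ set
\[
\omega(z,r):=\Bigl(\fint_{\B_{z,r}}|\phi-(\phi)_{z,r}|^2\Bigr)^{1/2},\qquad a(z,r):=(\phi)_{z,r}=\fint_{\B_{z,r}}\phi ,
\]
and $\theta(z,r):=|\B_{z,r}\cap\{\phi=0\}|/|\B_{z,r}|\in[0,1]$. The proof has two parts. First we establish a decay estimate for $\omega$. Then we convert it into a bound on $|a(z,r)|$, and hence on $|\phi(z)|$ at Lebesgue points, via the distance to $\mathcal F_\phi$.

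\emph{Step 1: excess decay.} Fix $\B_{z,r}\subset\B_1$ and write $\psi=\phi_{\B_{z,r}}$. For $\rho\le r/2$, the bound \eqref{eq_hyp2} gives
\[
\fint_{\B_{z,\rho}}|\psi-(\psi)_{z,\rho}|^2\le \rho^2\|\nabla\psi\|^2_{L^\infty(\B_{z,r/2})}\le K^2(\rho/r)^2\fint_{\B_{z,r}}|\psi-(\psi)_{z,r}|^2 .
\]
Combining this with \eqref{eq_hyp1} and the triangle inequality, we get for $\rho=\tau r$
\[
\omega(z,\tau r)\le K\tau\,\omega(z,r)+C(\tau)\,\theta(z,r)^{1/2},
\]
where we used $\fint_{\B_{z,r}}|\phi-\psi|^2\le\theta(z,r)\le1$. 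Choose $\tau$ small so that $K\tau\le\tau^{\alpha}/2$ for some $\alpha\in(0,1)$. Iterating then yields
\[
\omega(z,\tau^kr)\le \tau^{k\alpha}\omega(z,r)+C,
\]
that is, a Campanato-type bound in which the excess stays bounded by $C+(\rho/r)^\alpha\,\omega(z,r)$.

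\emph{Step 2: controlling the mean.} Here we must use the boundary. If $z\in\mathcal F_\phi$ (or is close to it), the zero set has positive density, which pins the mean: since $\phi=0$ on a set of measure $\theta|B|$,
\[
|a(z,r)|^2\,\theta(z,r)\le\fint_{\B_{z,r}}|\phi-a|^2=\omega(z,r)^2 .
\]
This only helps when $\theta$ is not small, and I expect this to be the main obstacle: the zero set may have tiny density. In that case \eqref{eq_hyp1} is strong, because $\phi$ is $\sqrt\theta$-close to the ``smooth'' $\psi$, so $\phi$ is almost Lipschitz at that scale. My plan is a dichotomy at each scale. Either $\theta(z,\rho)\ge\eta$, in which case $|a|\le\eta^{-1/2}\omega\le C$. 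Or $\theta<\eta$ on a range of scales, in which case the improved iteration $\omega(\tau\rho)\le\tau^\alpha\omega(\rho)/2+C\sqrt\eta$ keeps $\omega$ below a small multiple of $1$. In the second case, since $z'\in\mathcal F_\phi$ forces $\theta>0$ at every scale and $\phi=0$ on a positive-measure set, one must show that the mean $a$ cannot be large. The idea is this: if $|a|\gg\omega/\sqrt\theta$, then $\theta|a|^2\le\omega^2$ forces $\theta$ to be extremely small, and $\phi\approx a$ on most of the ball. Going down to smaller scales around a density point of $\{\phi=0\}$, which exists near a boundary point, the mean must drop from $|a|$ to $0$. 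Across the chain of balls $\tau^{k}$, however, each step changes the mean by at most $C\omega(\tau^k)$, so $|a|$ grows at most linearly in the number of steps. This gives a Lipschitz-type bound $|a(z,\rho)|\lesssim C\log(\cdots)$, or better, $C$, once the geometric decay of Step 1 is used.

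\emph{Step 3: conclusion.} For a Lebesgue point $z\in\B_{1/2}$ with $\delta=\mathrm{dist}(z,\mathcal F_\phi)$, pick $z'\in\mathcal F_\phi$ with $|z-z'|\approx\delta$. By Step 2 at $z'$, with starting scale $r\sim 1/2$ and $\omega(z',1/2)\lesssim\|\phi\|_{L^2}$, we get $|a(z',2\delta)|\le C(1+\delta^\alpha\|\phi\|_{L^2})$ and $\omega(z',2\delta)\le C(1+\delta^\alpha\|\phi\|_{L^2})$. Inside $\B_{z,\delta}$ the function $\phi$ is either a.e.\ nonzero or a.e.\ zero. In the zero case the claim is trivial. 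In the nonzero case \eqref{eq_hyp1} gives $\phi=\phi_{\B_{z,\delta}}$ a.e., and by \eqref{eq_hyp2} this function is Lipschitz on $\B_{z,\delta/2}$ with constant controlled by $\omega/\delta$. Hence $|\phi(z)|\le|a(z,\delta)|+C\omega(z,\delta)\le C(1+\delta^\alpha\|\phi\|_{L^2})$, using $\B_{z,\delta}\subset\B_{z',2\delta}$ to compare the means and excesses.
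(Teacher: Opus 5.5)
Your Step 1 matches the paper's excess decay (Lemma~\ref{lem_dev_decrease} and Corollary~\ref{cor_BMO}), and Step 3 is essentially the paper's final case analysis. The gap is in Step 2, which is where the whole difficulty of going beyond BMO lies, and there the argument does not close.

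\textbf{Why the chain argument fails.} Your chain bound gives $|a(z,\rho)|\lesssim 1+\tau^{-d/2}\sum_{k<N}\omega_k$. Here $N$ is the number of dyadic steps needed to reach a ball around a density point of $\{\phi=0\}$ on which the zero set has density bounded below. Nothing controls $N$. The zero set in $\B_{z,\rho}$ may consist of many tiny scattered pieces, so the first scale $s$ at which some ball has zero-set density $\ge\frac12$ can be arbitrarily small compared to $\rho$. Along the chain you only know a uniform floor for $\omega_k$: $\omega_k\le C$ from Step 1, or $\omega_k\lesssim\sqrt\eta$ in your improved iteration. A floor summed over $N$ steps gives $CN\sim C\log(\rho/s)$, which is exactly the logarithmic growth BMO permits. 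The remark ``or better, $C$, once the geometric decay of Step 1 is used'' is not justified. Step 1 only decays down to the additive term $C(\tau)\theta^{1/2}$, which is not summable unless $\theta$ itself decays geometrically along the chain. (Also, $|a|\gg\omega/\sqrt\theta$ is impossible by your own inequality $\theta|a|^2\le\omega^2$; presumably you meant $|a|\gg\omega$.)

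\textbf{The missing idea.} You need a feedback mechanism in the regime $|a|\ge M(\omega+1)$ that forces $\theta$, and hence $\omega$, to decay geometrically, so that the drift of the mean becomes summable. The paper gets this in Lemma~\ref{lem_supp_decrease}. It restricts \eqref{eq_hyp1} on $\B_1$ to $\B_\tau\cap\{\phi=0\}$, where $|\phi-\phi_{\B_1}|=|\phi_{\B_1}|\ge|a|-C(1+\omega)$ by \eqref{eq_hyp2}; this yields $\theta(\tau)\le\frac14\theta(1)$. Iterating (Lemma~\ref{lem_supp_decrease_iterate}), the mean stays large at all scales and $\{\phi=0\}$ has density $0$ at the center.

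Your Chebyshev inequality would also work if used as feedback rather than at a single scale. Inserting $\theta(r)^{1/2}\le\omega(r)/|a(r)|$ into $\omega(\tau r)\le\frac12\omega(r)+C_1\theta(r)^{1/2}$ gives $\omega(\tau r)\le\frac34\omega(r)$ whenever $|a(r)|\ge 4C_1$. This is a contraction with no floor. Since $|a(\tau r)-a(r)|\le\tau^{-d/2}\omega(r)$, the induction closes for $M$ large.

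\textbf{How to conclude.} The contradiction should not come from a chain to a single density point. Instead, apply the above at every center $\zeta\in\B_{z,\rho/2}$; the mean at $(\zeta,\rho/2)$ differs from that at $(z,\rho)$ by at most $2^{d/2}\omega(z,\rho)$. Then $\{\phi=0\}$ has density $0$ at every point of $\B_{z,\rho/2}$, so it is null there by Lebesgue's density theorem. This contradicts $\mathcal{F}_\phi\cap\B_{z,\rho/2}\neq\emptyset$ whenever $\rho>2\,\mathrm{dist}(z,\mathcal{F}_\phi)$. With the threshold $M(\omega+1)\lesssim 1+\rho^\alpha\Vert\phi\Vert_{L^2(\B_1)}$ from Step 1, this is exactly the bound on the mean your Step 3 needs.
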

 In the applications \(\phi\) is
the highest derivative of the minimizer and \(\phi_{\B_{z,r}}\) is the corresponding
derivative of the solution of the associated linear equation with the same boundary data on
\(\B_{z,r}\). In this case~\eqref{eq_hyp1} follows from energy comparison and~\eqref{eq_hyp2} from
interior elliptic estimates.

\subsubsection{\(W^{n,\infty}\) regularity of minimizers}

% As a corollary of Theorem~\ref{th_main}, we obtain the optimal regularity for a wide range
% of Bernoulli-type free boundary problems.

Recall that a quadratic form \(Q\) on
\((\R^d)^{\otimes n}\otimes \R^p\) is \emph{elliptic} if it satisfies the
Legendre--Hadamard condition: there exists \(\lambda_Q>0\) such that
\begin{equation}\label{eq_legendrehadamard}
\forall\xi\in\R^d,\ \forall v\in\R^p,\quad
Q(\xi^{\otimes n}\otimes v)\geq\lambda_Q|\xi|^{2n}|v|^2.
\end{equation}
Taking the Fourier transform on both sides, Gårding's inequality gives
\[
\forall u\in H^n(\R^d,\R^p),\quad
\int_{\R^d}Q(\nabla^n u)\geq\lambda_Q\int_{\R^d}|\nabla^n u|^2.
\]

\begin{corollary}\label{cor_Q}
Let \(d,p,n\in\N^*\) and let \(Q\) be an elliptic quadratic form on \((\R^d)^{\otimes
n}\otimes \R^p\). Let \(m\in\{0,1,\hdots,n-1\}\) and let \(u\in H^n(\B_1,\R^p)\) be a local
minimizer of
\[v\in H^n(\B_1,\R^p)\mapsto \int_{\B_1}\left(Q(\nabla^n v)+\chi_{\nabla^m v\neq 0}\right).\]
Then \(u\in W^{n,\infty}_\loc(\B_1,\R^p)\). More precisely, if \(\mathcal{F}_{\nabla^m
u}\cap\B_1\neq\emptyset\), then for almost every \(z\in\B_{\frac{1}{2}}\),
\[|\nabla^n u(z)|\leq C\left(1+\mathrm{dist}(z,\mathcal{F}_{\nabla^m u})^\alpha
\Vert \nabla^n u\Vert_{L^2(\B_1)}\right)\]
for some constants \(C,\alpha>0\) depending only on \(d,p,n,m,Q\).
\end{corollary}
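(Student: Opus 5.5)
We will deduce Corollary~\ref{cor_Q} from Theorem~\ref{th_main} applied with \(\phi=\nabla^n u\). Since the free boundary in the statement is \(\mathcal{F}_{\nabla^m u}\) while the theorem uses \(\mathcal{F}_\phi\) and \(|\{\phi=0\}\cap \B_{z,r}|\), we first relate the two. On the set \(\{\nabla^m u=0\}\) we have \(\nabla^{k}u=0\) a.e. for all \(k\ge m\), since Sobolev functions have vanishing gradient a.e. on level sets; iterating gives \(\nabla^n u=0\) a.e. there. Thus \(|\B_{z,r}\cap\{\nabla^m u=0\}|\le |\B_{z,r}\cap\{\nabla^n u=0\}|\).

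\textbf{Construction of \(\phi_{\B_{z,r}}\) and hypothesis \eqref{eq_hyp1}.} For each ball \(\B_{z,r}\subset\B_1\), let \(h\) be the minimizer of \(\int_{\B_{z,r}}Q(\nabla^n v)\) among \(v\in u+H^n_0(\B_{z,r},\R^p)\). It exists and is unique by Gårding's inequality on \(H^n_0\), extending by zero. Set \(\phi_{\B_{z,r}}=\nabla^n h\). Testing minimality of \(u\) against \(h\) (extended by \(u\) outside the ball) gives
\[
\int_{\B_{z,r}}Q(\nabla^n u)-Q(\nabla^n h)\le |\B_{z,r}\cap\{\nabla^m h\ne0\}|-|\B_{z,r}\cap\{\nabla^m u\ne0\}|\le |\B_{z,r}\cap\{\nabla^m u=0\}|.
\]
By the Euler--Lagrange equation for \(h\), the left side equals \(\int Q(\nabla^n(u-h))\). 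Gårding, applied to \(u-h\in H^n_0\), bounds this below by \(\lambda_Q\int|\nabla^n u-\nabla^n h|^2\). Hence
\[
\int_{\B_{z,r}}|\phi-\phi_{\B_{z,r}}|^2\le\lambda_Q^{-1}|\B_{z,r}\cap\{\nabla^n u=0\}|.
\]
The constant \(\lambda_Q^{-1}\) is removed by applying the theorem to \(\tilde\phi=\lambda_Q^{1/2}\phi\), with \(\tilde\phi_{\B}\) rescaled likewise. The zero sets are unchanged and \eqref{eq_hyp2} is scale invariant.

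\textbf{Hypothesis \eqref{eq_hyp2}.} This is the main obstacle, an interior estimate for constant-coefficient Legendre--Hadamard systems. The function \(w=\nabla^n h\) solves a constant-coefficient elliptic system, since derivatives of \(h\) solve the same system as \(h\). We also need to subtract its mean. Since \(Q\) only involves \(\nabla^n\), we may replace \(h\) by \(h-P\) with \(P\) a polynomial of degree \(n\) such that \(\nabla^n P=\fint w\). Then the standard interior estimates apply: \(L^\infty\) bounds for \(\nabla w\) on \(\B_{z,r/2}\) follow from \(H^k\) Caccioppoli estimates (proved via Fourier/Gårding on cutoffs) and Sobolev embedding for \(k>d/2+n+1\). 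Combined with Poincaré-type reductions (Caccioppoli for higher order systems controlling lower derivatives by \(L^2\) of \(\nabla^n(h-P)\) modulo polynomials), this gives
\[
r\|\nabla w\|_{L^\infty(\B_{z,r/2})}\le K\Big(\fint_{\B_{z,r}}|w-\textstyle\fint w|^2\Big)^{1/2},
\]
with \(K=K(d,p,n,Q)\) by scaling. The delicate point is controlling \(\nabla^n\) derivatives in terms of \(\nabla^n\) itself without lower-order data; we handle it by noting that \(\partial^\beta h\) for \(|\beta|=n\) is itself a solution, so Caccioppoli can be applied directly to \(w\) at order zero.

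\textbf{Conclusion.} Theorem~\ref{th_main} then gives, for a.e. \(z\in\B_{1/2}\), \(|\nabla^n u(z)|\le C(1+\mathrm{dist}(z,\mathcal{F}_{\nabla^n u})^\alpha\|\nabla^n u\|_{L^2})\), assuming \(\mathcal{F}_{\nabla^n u}\neq\emptyset\). It remains to compare the two free boundaries. We expect \(\mathcal{F}_{\nabla^m u}\subset\mathcal{F}_{\nabla^n u}\): near a point where \(\{\nabla^m u=0\}\) has positive measure in every ball, \(\{\nabla^n u=0\}\) does too. Conversely, if \(\nabla^n u=0\) a.e. in a ball, then \(\nabla^m u\) is a polynomial there, so it is either nonzero a.e. or identically zero. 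So a ball meeting both phases of \(\nabla^m u\) also meets both phases of \(\nabla^n u\), giving the inclusion and hence \(\mathrm{dist}(z,\mathcal{F}_{\nabla^n u})\le\mathrm{dist}(z,\mathcal{F}_{\nabla^m u})\). If \(\mathcal{F}_{\nabla^m u}=\emptyset\), then locally either \(u\) solves the linear system or \(\nabla^m u\equiv0\), so \(u\) is smooth. Together with covering and scaling, this gives \(u\in W^{n,\infty}_{\loc}\).
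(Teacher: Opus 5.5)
Your proposal is correct and follows essentially the same route as the paper. You take $\phi=\nabla^n u$ and $\phi_{\B_{z,r}}=\nabla^n h$ with $h$ the $Q$-minimizing replacement, obtain \eqref{eq_hyp1} from Gårding plus the minimality of $h$ and of $u$, and obtain \eqref{eq_hyp2} from the interior estimate (the paper's Lemma~\ref{lem_Cacciopoli}) applied to the components of $\nabla^n h-\fint\nabla^n h$, which are themselves solutions; your explicit checks that $\mathcal{F}_{\nabla^m u}\subset\mathcal{F}_{\nabla^n u}$, that rescaling absorbs $\lambda_Q$, and of the case of empty free boundary fill in points the paper leaves implicit.
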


\begin{remark}
We do not know whether the stronger estimate
\[\mathcal{F}_u\cap\B_1\neq\emptyset\Rightarrow
\Vert\nabla^n u\Vert_{L^\infty(\B_{\frac{1}{2}})}\leq C\]
holds for a constant \(C>0\) independent of \(u\). We do have, however, the following
uniform bound. Let \(u\) be as in Corollary~\ref{cor_Q}, let \(z\in\mathcal{F}_{\nabla^m
u}\), \(r>0\), and let
\[u_{z,r}(\zeta)=r^{-n}\left(u(z+r\zeta)-\sum_{\ell=0}^{m-1}\frac{r^\ell}{\ell !}
\nabla^\ell u(z):\zeta^{\otimes \ell}\right)\]
be the normalized blow-up sequence of \(u\) at \(z\). Then \(\limsup_{r\to 0}\Vert
u_{z,r}\Vert_{W^{n,\infty}(\B_1,\R^p)}\) is bounded by a constant depending on
\(d,p,n,m,Q\) but not on \(z\) and \(u\). In other words, the blow-up limits of any
minimizer at any non-trivial boundary point form a bounded set in
\(W^{n,\infty}(\B_1,\R^p)\).
\end{remark}

\begin{remark}
Corollary~\ref{cor_Q} gives a new proof of the Lipschitz regularity of the solutions of the
one-phase, two-phase and vectorial Bernoulli problems. Our proof does not seem to extend,
however, to the local minimizers of the general two-phase problem
\[u\in H^1(\B_1,\R)\mapsto \int_{\B_1}\left(|\nabla u|^2+\lambda^+ \chi_{u> 0}
+\lambda^-\chi_{u<0}\right)\]
when \(\lambda^+\neq \lambda^-\).
\end{remark}

\subsubsection{Non-degeneracy}

Together with the optimal regularity, a key property of the minimizers of~\eqref{eq_AC} is
the so-called \emph{non-degeneracy}, namely that
\[
|u(z)|\gtrsim\mathrm{dist}(z,\mathcal{F}_u).
\]
Combined with the Lipschitz bound, it guarantees that the blow-up limits are not zero, and
therefore that they carry information on the free boundary.

For the minimizers of~\eqref{e:bernoullintro} the non-degeneracy is more subtle. It fails
when \(m\in\{1,\hdots,n-1\}\), as we discuss in Remark~\ref{rk_nondeg} and Proposition~\ref{prop_pipe}. When \(m=0\), on the other hand, it holds.

\begin{proposition}\label{prop_nondegeneracy}
Let \(u\) satisfy the hypotheses of Corollary~\ref{cor_Q} with \(m=0\), and assume
\(u(0)\neq 0\). Then
\[\Vert u\Vert_{L^2(\B_1)}\geq c,\]
where \(c=c(d,p,n,Q)>0\) does not depend on \(u\).
\end{proposition}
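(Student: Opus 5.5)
Since $m=0$, we have $u\in W^{n,\infty}_\loc$ by Corollary~\ref{cor_Q}, so $u\in C^{n-1,1}_\loc$ and $u(0)\neq 0$ is meaningful pointwise. The plan is to argue by contradiction: assume $\Vert u\Vert_{L^2(\B_1)}=\eps$ is small, build a competitor $v=\eta u$ that vanishes on a small ball $\B_\rho$ around $0$, and show that $v$ has strictly smaller energy. This contradicts the assumption that $u(0)\ne 0$, because then $u\neq 0$ on a neighbourhood of $0$.

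\textbf{Step 1: $u$ is small in a strong norm on an interior ball.} Since $\Vert u\Vert_{L^2}$ is small, we want smallness of $\nabla^n u$ in $L^2(\B_{3/4})$ as well. We compare $u$ with its cutoff $\psi u$, where $\psi\in C^\infty_c(\B_1)$ and $\psi=1$ on $\B_{3/4}$; this is an admissible variation since $u-\psi u\in H^n_0$ locally, or more precisely we use the competitor $(1-\psi)u$, which is zero on $\B_{3/4}$. Minimality gives
\[
\int Q(\nabla^n u)+|\{u\ne0\}\cap\B_{3/4}|\le \int Q(\nabla^n((1-\psi)u))+\dots
\]
This is a Caccioppoli-type estimate combined with G\aa rding's inequality (which needs the cutoff to handle the form $Q$). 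It yields
\[
\int_{\B_{3/4}}|\nabla^n u|^2+|\{u\neq0\}\cap\B_{3/4}|\lesssim \Vert u\Vert_{H^n(\B_1\setminus\B_{3/4})}^2 .
\]
Interpolation then bounds the right-hand side by $\Vert u\Vert_{L^2}^2$ plus a small multiple of a higher norm. To close the estimate I would iterate over nested annuli in the standard way, or simply accept a bound by $\Vert\nabla^n u\Vert_{L^2(\B_1)}$ and use Corollary~\ref{cor_Q}'s Lipschitz-type bound instead. This is the main obstacle: controlling all derivatives up to order $n$ purely from $\Vert u\Vert_{L^2}$, uniformly in $u$.

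\textbf{Step 2: the volume term is small.} The key quantitative output of Step 1 is that $|\{u\ne 0\}\cap\B_{1/2}|\lesssim \eps^2$ (up to the interpolation issue above). By the density estimate implicit in Theorem~\ref{th_main}, i.e. its hypothesis \eqref{eq_hyp1} with $\phi=u$, together with elliptic estimates for the $Q$-harmonic replacement $\phi_{\B_{z,r}}$, the function $u$ is then close in $L^\infty(\B_{1/4})$ to a $Q$-harmonic function $h$ whose size is bounded by the $L^2$ norm plus the square root of the volume. Concretely,
\[
\Vert u\Vert_{L^\infty(\B_{1/4})}\lesssim \eps+|\{u\neq0\}\cap \B_{1/2}|^{1/2}\lesssim\eps .
\]
Rescaling this estimate to every ball $\B_{r}$ and iterating, in the spirit of the decay in the proof of Theorem~\ref{th_main}, we aim for
\[
\fint_{\B_r}|u|^2\le \eps^2 r^{2n}\quad\text{for all } r\le 1/4 .
\]
The improvement comes at each scale: once $|\{u\ne0\}\cap\B_r|$ is much smaller than $r^d$, the energy comparison with a cutoff competitor on $\B_r$ gives
\[
|\{u\ne 0\}\cap\B_{r/2}|\lesssim r^{-2n}\int_{\B_r}|u|^2 .
\]
This is a smallness-propagating inequality, exactly as in the Alt--Caffarelli non-degeneracy argument but with $n$ derivatives falling on the cutoff.

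\textbf{Step 3: conclusion.} Iterating this dichotomy down the scales, with the threshold $\eps\le c(d,p,n,Q)$, yields $|\{u\neq0\}\cap\B_r|=o(r^d)$ as $r\to0$. This is incompatible with $u(0)\ne0$ and the continuity of $u$, which force $\{u\ne0\}\supset\B_r$ for small $r$.
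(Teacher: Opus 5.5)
\paragraph{The gap is in Step 2.} That step drives your iteration, and it does not hold as justified. The bound $\Vert u\Vert_{L^\infty(\B_{1/4})}\lesssim\eps$ does not follow from \eqref{eq_hyp1}. That estimate is stated with $\phi=\nabla^n u$, not $\phi=u$. It comes from comparing $u$ with its $Q$-harmonic replacement $h$ on a ball $B$, and it controls $\int_B|\nabla^n(u-h)|^2$ by $|B\cap\{u=0\}|$, the measure of the \emph{zero} set. In your regime the zero set fills almost all of $B$, so this gives no smallness at all.

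Without such an estimate, your volume bound $|\{u\neq0\}\cap\B_{r/2}|\lesssim r^{-2n}\int_{\B_r}|u|^2$ cannot be iterated on its own. It only bounds volume by $L^2$ mass. Meanwhile the normalized quantity $\Vert u_r\Vert_{L^2(\B_1)}$, with $u_r=r^{-n}u(r\cdot)$, may a priori grow by a factor $2^{n+d/2}$ at each halving. So Step 3 ``iterating this dichotomy'' has nothing to iterate.

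Your scheme is right in spirit: an $L^\infty$ bound times the volume would give a quadratic recursion. This actually works when $n>d/2$, via Caccioppoli and Sobolev. In general, the missing ingredient is to replace $L^\infty$ by $L^q$ for some $q>2$, using $H^n\hookrightarrow L^q$:
\[
\Vert u_{r/4}\Vert_{L^2(\B_1)}\le|\{u_{r/4}\neq0\}\cap\B_1|^{\frac12-\frac1q}\,\Vert u_{r/4}\Vert_{L^q(\B_1)}\lesssim\Vert u_{r/2}\Vert_{H^n(\B_1)}^{2-\frac2q}\lesssim\Vert u_r\Vert_{L^2(\B_1)}^{2-\frac2q}.
\]
The three inequalities come, in order, from Hölder; from your (rescaled) cutoff-competitor volume bound together with Sobolev; and from Caccioppoli. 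Since $2-\frac2q>1$, smallness of $\Vert u\Vert_{L^2(\B_1)}$ gives $\Vert u_{r/4}\Vert_{L^2(\B_1)}\le\frac12\Vert u_r\Vert_{L^2(\B_1)}$ for $r=4^{-k}$. Hence $\fint_{\B_{4^{-k}}}|u|^2\to0$, and $u(0)=0$ by continuity. This is exactly the paper's argument.

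\paragraph{Step 1 (Caccioppoli).} Your route is a legitimate alternative to the paper's, but as written it is not closed. The paper compares $u$ with $e^{t\eta^b}u$, which keeps the zero set fixed and yields the Euler--Lagrange equation tested against $\eta^b u$, and then cites Barton. In your route, the top-order term $\Vert\nabla^n u\Vert_{L^2}$ on the annulus enters with an $O(1)$ constant, so interpolation alone does not make it small. It does close with three more ingredients:
\begin{itemize}
\item Widman's hole-filling: add $C\int_{\B_s}|\nabla^n u|^2$ to both sides to obtain a contraction factor $\theta<1$.
\item Interpolation of the lower-order terms with the appropriate powers of $t-s$.
\item The standard absorption lemma over radii $\frac12\le s<t\le1$.
\end{itemize}
Your fallback of invoking Corollary~\ref{cor_Q} does not work. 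Its bound carries an additive constant and involves $\Vert\nabla^n u\Vert_{L^2(\B_1)}$, so it yields no smallness from a small $\Vert u\Vert_{L^2(\B_1)}$.
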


\subsubsection{Optimal regularity for the Stokes problem}

Our techniques also give the optimal regularity for constrained vectorial free boundary
problems. For simplicity we treat the case of divergence-free vector fields. For an open
set \(\Om\subset\R^d\) we set
\[H^1_{\mathrm{div}}(\Om,\R^d)=\left\{u\in H^1(\Om,\R^d):\mathrm{div}(u)=0\right\},
\qquad H^1_{0,\mathrm{div}}(\Om,\R^d)=H^1_{\mathrm{div}}(\Om,\R^d)\cap H^1_{0}(\Om,\R^d).\]

\begin{corollary}\label{cor_contrainte}
Let \(d\geq 2\) and let \(u\in H^1_{\mathrm{div}}(\B_1,\R^d)\) be a local minimizer of
\[v\in H^1_{\mathrm{div}}(\B_1,\R^d)\mapsto \int_{\B_1}\left(|\nabla v|^2
+\chi_{v\neq 0}\right)\]
such that \(\mathcal{F}_u\cap\B_1\neq \emptyset\). Then for almost every
\(z\in\B_{\frac{1}{2}}\),
\[|\nabla u(z)|\leq C\left(1+\mathrm{dist}\left(z,\mathcal{F}_u\right)^\alpha
\Vert \nabla u\Vert_{L^2(\B_1)}\right)\]
for some constants \(C,\alpha>0\) depending only on \(d\).
\end{corollary}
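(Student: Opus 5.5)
We apply Theorem~\ref{th_main} with \(\phi=\nabla u\in L^2(\B_1,\R^{d\times d})\), \(p=d^2\). Since \(\{\nabla u\neq 0\}\subset\{u\neq 0\}\) up to a null set (Stampacchia: \(\nabla u=0\) a.e. on \(\{u=0\}\)), we get \(|\B_{z,r}\cap\{u=0\}|\le|\B_{z,r}\cap\{\nabla u=0\}|\). The sets \(\mathcal{F}_u\) and \(\mathcal{F}_{\nabla u}\) need not coincide. The cleanest fix is to run the argument of Theorem~\ref{th_main} with the boundary defined through \(u\): hypothesis~\eqref{eq_hyp1} with the right-hand side \(|\B_{z,r}\cap\{u=0\}|\) is stronger, and the proof only uses the measure of the zero set as the "cost". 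Alternatively, one checks \(\mathcal{F}_{\nabla u}\subset\) a neighborhood of \(\mathcal{F}_u\): on a ball where \(u\neq0\) a.e., \(u\) solves Stokes and is analytic, so \(\nabla u=0\) only on a null set unless \(u\) is constant. I would adopt the first route and note that it is what the abstract theorem really uses.

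\textbf{Choice of \(\phi_{\B_{z,r}}\).} For each ball, let \(w\in u+H^1_{0,\mathrm{div}}(\B_{z,r},\R^d)\) minimize \(\int_{\B_{z,r}}|\nabla w|^2\) in that affine class. Equivalently, \(w\) solves the Stokes system \(-\Delta w+\nabla \pi=0\), \(\mathrm{div}\, w=0\), with \(w=u\) on \(\partial\B_{z,r}\). Set \(\phi_{\B_{z,r}}=\nabla w\). The extension of \(w\) by \(u\) outside \(\B_{z,r}\) is an admissible competitor in \(H^1_{\mathrm{div}}(\B_1)\).

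\textbf{Hypothesis~\eqref{eq_hyp1}.} Since \(w\) is a Dirichlet minimizer in a linear class containing \(u-w\), orthogonality gives
\[\int_{\B_{z,r}}|\nabla u|^2-\int_{\B_{z,r}}|\nabla w|^2=\int_{\B_{z,r}}|\nabla(u-w)|^2.\]
Minimality of \(u\) then gives
\[\int|\nabla u|^2+|\{u\ne0\}\cap\B_{z,r}|\le\int|\nabla w|^2+|\B_{z,r}|.\]
Hence \(\int_{\B_{z,r}}|\nabla u-\nabla w|^2\le|\B_{z,r}\cap\{u=0\}|\), which is \eqref{eq_hyp1}, with the zero set taken for \(u\) as discussed above.

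\textbf{Hypothesis~\eqref{eq_hyp2}.} This is the interior gradient estimate for the Hessian of Stokes solutions:
\[r\|\nabla^2 w\|_{L^\infty(\B_{z,r/2})}\le K\Big(\fint_{\B_{z,r}}|\nabla w-(\nabla w)_{\B_{z,r}}|^2\Big)^{1/2}.\]
By scaling we may take \(r=1\), \(z=0\). Subtracting the affine divergence-free field \(x\mapsto (\nabla w)_{\B_1}x\), which is itself a Stokes solution with zero pressure, reduces the claim to \(\|\nabla^2 w\|_{L^\infty(\B_{1/2})}\lesssim\|\nabla w\|_{L^2(\B_1)}\). One route is Caccioppoli-type estimates for Stokes, where the pressure is controlled by \(\|\pi-\bar\pi\|_{L^2}\lesssim\|\nabla w\|_{L^2}\) via the Bogovskii operator and \(\nabla\pi=\Delta w\) in \(H^{-1}\), followed by iteration on the derivatives of \(w\), which are again Stokes solutions, and Sobolev embedding. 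A second route uses that \(\Delta\pi=0\) and \(\Delta^2 w=0\), so \(w\) is biharmonic and the estimate follows from interior estimates for biharmonic functions plus a Poincaré inequality. The constant \(K\) depends only on \(d\).

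\textbf{Conclusion and main obstacle.} Theorem~\ref{th_main} then yields the bound on \(|\nabla u(z)|\). The main obstacle is the mismatch between \(\{u=0\}\) and \(\{\nabla u=0\}\) in the definition of the boundary, together with ensuring that the theorem's proof tolerates the zero set of \(u\) in place of that of \(\phi\). If it does not, I would instead prove \(\mathcal{F}_{\nabla u}\cap\B_{1/2}\subset\mathcal{F}_u\) up to constant components, using analyticity of Stokes solutions in \(\{u\neq0\}\).
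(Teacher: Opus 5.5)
Your proposal is correct and is essentially the paper's proof: $\phi=\nabla u$, $\phi_{\B_{z,r}}$ is the gradient of the Stokes replacement, \eqref{eq_hyp1} follows from orthogonality plus minimality of $u$, and \eqref{eq_hyp2} follows from interior Stokes estimates applied to $\partial_j w-\fint_{\B_{z,r}}\partial_j w$ (the paper gets these from Kratz's Poisson kernel in Lemma~\ref{lem_Cacciopoli_Stokes}; your pressure or biharmonic routes work equally well). The boundary mismatch is harmless and Theorem~\ref{th_main} can be applied as stated: if $\nabla u=0$ a.e. on a ball where $\{u=0\}$ has positive measure, then $u\equiv 0$ there, so $\mathcal{F}_u\subset\mathcal{F}_{\nabla u}$ and $\mathrm{dist}(z,\mathcal{F}_{\nabla u})\le\mathrm{dist}(z,\mathcal{F}_u)$, which is exactly the direction needed. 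Your fallback inclusion $\mathcal{F}_{\nabla u}\subset\mathcal{F}_u$ points the wrong way, but your primary route (rerunning the theorem with the set $\{u=0\}$, on which $\nabla u=0$ a.e.) is also valid.
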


As a direct application to a well-studied shape
optimization problem, define the drag of an object $K$ (a compact subset of \(\R^d\) where \(d\geq 3\)) in the
direction \(u_\infty\in\R^d\setminus\{0\}\) in a Stokes fluid as
\begin{equation}\label{eq_drag}
\mathcal{D}(K):=\inf\left\{\int_{\R^d\setminus K}|\nabla u|^2,\
u\in u_\infty+\dot{H}^1_{\mathrm{div}}(\R^d,\R^d): u|_{K}\equiv 0\right\}.
\end{equation}
where $u|_{K}\equiv 0$ is understood as the quasi-continuous representative of $u$ vanishing quasi-everywhere in $K$ i.e. $u\in H^1_{0,\loc}(\R^d\setminus K,\R^d)$, and $\dot{H}^1_{0,\mathrm{div}}(\R^d,\R^d)$ is the closure of divergence-free compactly supported vector fields under the homogeneous $H^1$ norm. The minimization of \(\mathcal{D}(K)\) under a volume constraint on \(K\) was studied in~\cite{P73,B74} among three-dimensional shapes that are rotationally invariant around
\(u_\infty\). The conjectured minimizer is smooth away from its rotation axis, where it has
two angular points of opening \(\frac{2\pi}{3}\). For any minimizer of \(\mathcal{D}\)
among closed sets \(K\) of given volume, with associated flow \(u\in
u_\infty+\dot{H}^1_{\mathrm{div}}(\R^d,\R^d)\), we prove that
\[\Vert\nabla u\Vert_{L^\infty(\R^d,\R^d\otimes\R^d)}<\infty;\]
see Proposition~\ref{prop_drag}. Any precise description of \(\partial K\) remains open.

\subsubsection{Lipschitz regularity of stationary solutions}

As a further corollary of Theorem~\ref{th_main}, we obtain the Lipschitz regularity of
two-dimensional solutions of the classical scalar Bernoulli problem in a weak sense.

We remind that \(u\in H^1(\B_1,\R)\) is a stationary solution of the Bernoulli problem if it
satisfies~\eqref{eq:stationary}, or equivalently if
\begin{equation*}\label{eq:critical_scalar}
 \forall \xi\in\mathcal{C}^\infty_c(\B_1,\R^d),\quad
 \int_{\B_1}\left(|\nabla u|^2\mathrm{div}\,\xi-2\nabla u\cdot D\xi\,\nabla u\right)
 =\int_{\B_1}\chi_{u=0}\,\mathrm{div}\,\xi.
\end{equation*}
Note that we do not require \(u\) to satisfy an equation on its support.

This problem was studied extensively in~\cite{KW25}, see also~\cite{KW25_wave} for an
application to the construction of water wave solutions. There the Lipschitz regularity was
conjectured to fail, see~\cite[Rem.~3.1]{KW25}, and was assumed as a hypothesis instead.

Classical solutions of this problem, meaning solutions whose free boundary is assumed
\emph{a priori} to be smooth, were studied in~\cite{JK16,JK19}, in relation with the
non-trivial global solutions constructed in~\cite{HHP11}; see also~\cite{K24,JK24} for
classical solutions satisfying an additional index bound. The Lipschitz regularity of
\emph{inner variation} solutions of elliptic equations coming from harmonic maps and
nonlinear elasticity was studied in~\cite{CIKO14}, see also~\cite{IKO13}, using the
regularity theory of quasiconformal mappings.

\begin{corollary}\label{cor_lipschitz_crit}
Let \(d=2\). There exist \(C,\beta>0\) such that the following holds. Let \(u\in
H^1(\B_1,\R)\) be a stationary solution of the Bernoulli problem with
\(\mathcal{F}_u\cap\B_1\neq\emptyset\). Then for almost every \(z\in \B_\frac{1}{2}\),
\[|\nabla u(z)|\leq C\left(1+\mathrm{dist}(z,\mathcal{F}_u)^\beta
\Vert\nabla u\Vert_{L^2(\B_1)}\right).\]
\end{corollary}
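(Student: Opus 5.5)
Corollary~\ref{cor_lipschitz_crit} will follow from Theorem~\ref{th_main} applied to a complex-valued quantity built from $\nabla u$. In two dimensions, identify $\R^2\simeq\C$ and set
\[\phi:=\partial_z u=\tfrac12(\partial_1u-\iu\,\partial_2u),\qquad p=2.\]
Testing the stationarity condition with arbitrary $\xi$ yields the distributional identity
\[\partial_{\bar z}\big(\phi^2\big)=\tfrac{1}{4}\,\partial_{z}\chi_{u\neq 0}.\]
Up to normalisation, this is the statement that the Hopf differential $(\partial_zu)^2-\tfrac14\chi_{u\ne0}$ is "holomorphic modulo the free boundary term". Since $\{\nabla u\ne0\}\subset\{u\ne 0\}$ up to null sets, $\{\phi\neq0\}$ and $\{u\ne 0\}$ agree up to a null set. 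Hence $\mathcal{F}_\phi=\mathcal{F}_u$ and $|\{\phi=0\}\cap \B_{z,r}|=|\{u=0\}\cap\B_{z,r}|$. The plan is then to construct, on each ball $\B_{z,r}$, a comparison $\phi_{\B_{z,r}}$ satisfying \eqref{eq_hyp1} and \eqref{eq_hyp2}.

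The construction goes through $\Psi:=\phi^2-\tfrac14\chi_{u\ne0}\in L^1$, which satisfies $\partial_{\bar z}\Psi=\tfrac14\partial_{\bar z}\chi_{u\ne 0}-\tfrac14\partial_z\chi_{u\ne0}$. I will simply use $\partial_{\bar z}(\phi^2-\tfrac14\chi_{u\neq0})=-\tfrac14\partial_{\bar z}\chi_{u=0}+\tfrac14\partial_z\chi_{u\ne0}$ after correctly normalising; the precise constants do not matter. On $\B_{z,r}$ I decompose $\phi^2=h+g$, where $h$ is holomorphic and $g$ is obtained by applying the Cauchy transform (a Calderón--Zygmund operator, Beurling transform type) to $\chi_{\{u=0\}\cap\B_{z,r}}$. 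The CZ bound gives
\[\|g\|_{L^2}^2\lesssim|\{u=0\}\cap\B_{z,r}|,\qquad \|g\|_{L^\infty}\le C.\]
The natural candidate is then $\phi_{\B_{z,r}}:=\sqrt{h}$, with a branch chosen wherever possible. This needs care: $h$ may have zeros, so I would instead apply Theorem~\ref{th_main} to $\Phi:=\phi^2$ (or to $\phi\cdot|\phi|$), and take $\Phi_{\B_{z,r}}:=h$ plus the constant correction needed for \eqref{eq_hyp1}.

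Verifying \eqref{eq_hyp1} for $\Phi=\phi^2$ runs into the issue that $\Phi-h=g$ is bounded but its $L^2$ norm squared is controlled only by $C|\{u=0\}\cap\B|$. Since $\{\Phi=0\}=\{u=0\}$ up to null sets, this is fine up to a multiplicative constant, which is absorbed by rescaling $\Phi$ by a fixed constant. Hypothesis \eqref{eq_hyp2} holds for holomorphic $h$ by the mean value property (Cauchy estimates), with $K$ universal.

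Theorem~\ref{th_main} then gives $|\phi(z)|^2\le C(1+\mathrm{dist}(z,\mathcal{F}_u)^\alpha\|\phi^2\|_{L^2(\B_1)})$. The main obstacle is that $\|\phi^2\|_{L^2}=\|\nabla u\|_{L^4}^2$ is not a priori finite for $u\in H^1$; only $\Phi\in L^1$ is known. I would first run the argument with the hypotheses of Theorem~\ref{th_main} in an $L^1$-type version. Alternatively, I would first prove higher integrability: by the CZ estimate, $\Phi=h+g$ with $g\in L^\infty$ and $h$ holomorphic, hence locally bounded. This gives $\Phi\in L^\infty_\loc(\B_{3/4})$ with a bound of the form $C(1+\|\nabla u\|_{L^2}^2)$, after shrinking the balls (Theorem~\ref{th_main} is applied on $\B_{3/4}$ rescaled). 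Taking square roots then gives the claim with $\beta=\alpha/2$, using $\sqrt{a+b}\le\sqrt a+\sqrt b$ and absorbing $\|\nabla u\|_{L^2}$ appropriately, which is where the exponent $\beta$ instead of $\alpha$ arises.
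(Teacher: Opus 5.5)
Your final scheme is the paper's own proof (Lemma~\ref{lem_hopf_bounded} with $p=1$). You apply Theorem~\ref{th_main} to $\Phi=(\partial_z u)^2$, viewed as an $\R^2$-valued map, with comparison function $\Phi_{\B_{z,r}}=\Phi+\tfrac14\mathcal{B}[\chi_{\B_{z,r}\cap\{u=0\}}]$, which is holomorphic in $\B_{z,r}$. Then \eqref{eq_hyp2} is a Cauchy estimate, \eqref{eq_hyp1} follows from the $L^2$-isometry of $\mathcal{B}$, and taking square roots gives $\beta=\alpha/2$.

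\textbf{The integrability step rests on a false claim.} This is the step you yourself call the main obstacle. The corrector must solve $\partial_{\bar z}g=-\tfrac14\partial_z\chi_E$ with $E=\B_{z,r}\cap\{u=0\}$, so $g=-\tfrac14\mathcal{B}[\chi_E]$. The Cauchy transform solves $\partial_{\bar z}g=f$, which is the wrong equation; what appears is the Beurling transform $\partial_{\bar z}^{-1}\partial_z$. That operator maps $L^\infty$ only into BMO: for $E$ a square, $\mathcal{B}[\chi_E]$ blows up logarithmically at the corners. So $\Vert g\Vert_{L^\infty}\le C$ is false.

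Notice also what your conclusion would mean. If $\Phi\in L^\infty_{\loc}(\B_{3/4})$ with bound $C(1+\Vert\nabla u\Vert_{L^2}^2)$ held, that alone would be the Lipschitz estimate, and Theorem~\ref{th_main} would be superfluous. The decomposition $\Phi=h+g$ gives BMO for free, and the theorem exists precisely to go beyond that.

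What you actually need is only $\Phi\in L^2(\B_{3/4})$, and your decomposition does give this:
\begin{itemize}
\item On $\B_1$, $\Vert g\Vert_{L^2(\B_1)}\le\tfrac14|\B_1|^{1/2}$.
\item Hence $h=\Phi-g$ is holomorphic with $\Vert h\Vert_{L^1(\B_1)}\lesssim 1+\Vert\Phi\Vert_{L^1(\B_1)}$.
\item The mean value property gives $\Vert h\Vert_{L^\infty(\B_{3/4})}\lesssim 1+\Vert\Phi\Vert_{L^1(\B_1)}$.
\item Therefore $\Vert\Phi\Vert_{L^2(\B_{3/4})}\lesssim 1+\Vert\nabla u\Vert_{L^2(\B_1)}^2$, since $\Vert\Phi\Vert_{L^1(\B_1)}=\tfrac14\Vert\nabla u\Vert_{L^2(\B_1)}^2$.
\end{itemize}
The paper does the same thing using the weak-$L^1$ bound on $\mathcal{B}$ and Lemma~\ref{lem_L1infty}. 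You then apply Theorem~\ref{th_main} on the balls $\B_{z,1/4}$, $z\in\B_{1/2}$. No ``$L^1$-type version'' of the theorem is needed.

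\textbf{The zero sets do not coincide.} $\nabla u=0$ a.e.\ on $\{u=0\}$ gives only $\{u=0\}\subset\{\Phi=0\}$ up to null sets. Equality can fail: a nonzero constant is a stationary solution with $\Phi\equiv 0$. So the claim $\mathcal{F}_\Phi=\mathcal{F}_u$ is unjustified.

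Fortunately only the inclusion is needed. It gives $|\B_{z,r}\cap\{u=0\}|\le|\B_{z,r}\cap\{\Phi=0\}|$, which is what \eqref{eq_hyp1} requires. It also gives $\mathcal{F}_u\subset\mathcal{F}_\Phi$. On a ball where $\Phi=0$ a.e., $\nabla u=0$, so $u$ is constant and the ball misses $\mathcal{F}_u$. On a ball where $\Phi\neq0$ a.e., $\{u=0\}$ is null. Consequently $\mathcal{F}_\Phi\cap\B_1\neq\emptyset$ and $\mathrm{dist}(z,\mathcal{F}_\Phi)\le\mathrm{dist}(z,\mathcal{F}_u)$, so the estimate from Theorem~\ref{th_main} transfers to $\mathcal{F}_u$.

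With these two corrections your argument is the paper's proof.
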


%A close inspection of the proof gives \(\beta=1\), but we do not try to optimize this exponent, since we do not know whether the stronger estimate \(\Vert \nabla u\Vert_{L^\infty(\B_\frac{1}{2})}\leq C\) holds.

As a consequence of Corollary~\ref{cor_lipschitz_crit}, the result of~\cite[Th.~1.2]{KW25}
holds in dimension two under the sole hypothesis that \(u\) satisfies~\eqref{eq:stationary} and belongs to \(\mathcal{C}_\loc^2(\{u\neq 0\})\).

In general, when considering vectorial stationary solutions, i.e. functions \(u \in H^1(\B_1,\mathbb{R}^{p})\) such that:
\begin{equation}\label{eq:critical_vectorial}
 \forall \xi\in\mathcal{C}^\infty_c(\B_1,\R^d),\quad
 \int_{\B_1}\left(|\nabla u|^2\mathrm{div}\,\xi-2\sum_{j=1}^p\nabla u_j\cdot D\xi\,\nabla u_j\right)
 =\int_{\B_1}\chi_{u=0}\,\mathrm{div}\,\xi,
\end{equation}
the analogue of Corollary~\ref{cor_lipschitz_crit} is false, see Remark~\ref{rem_counterexample_crit}. However we can show Lipschitz regularity under a slightly stronger assumption, that is satisfied for instance in the relevant situation in which $u$ is harmonic in its support in the weak sense. Namely we consider  the following   condition:
\begin{equation}\label{eq_structure}
\forall k,l\in\{1,\hdots,p\},\ u_k\Delta u_l=0,
\end{equation}
which should be interpreted in the weak sense:
\begin{equation}\label{eq_structure_weak}
\forall k,l\in\{1,\hdots,p\},\forall \varphi\in\mathcal{C}^\infty_c(\B_1,\R),\  \int_{\B_1}\nabla(\varphi u_k)\cdot\nabla u_l=0.
\end{equation}
Then our second main result is the following.
\begin{theorem}\label{th_crit_vectorial_intro}
 Let $d=2$. Let \(u\in H^1(\B_1,\mathbb{R}^p)\) be a stationary solution of the Bernoulli problem, i.e. a solution of~\eqref{eq:critical_vectorial}, that satisfies~\eqref{eq_structure_weak}. Then

 \[
  \Vert\nabla u\Vert_{L^\infty(\B_\frac{1}{2})}\leq C\left(1+\Vert u\Vert_{L^2(\B_1)}\right)
  \]
for some constant $C>0$ that only depends on $p$.
\end{theorem}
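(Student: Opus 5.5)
The plan is to reduce Theorem~\ref{th_crit_vectorial_intro} to Theorem~\ref{th_main} applied to $\phi=\partial u:=\frac12(\partial_1 u-\iu\,\partial_2 u)\in L^2(\B_1,\C^p)\simeq L^2(\B_1,\R^{2p})$. Then we use the structure condition~\eqref{eq_structure_weak} to upgrade the resulting estimate to a bound that depends only on $\Vert u\Vert_{L^2}$.

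\textbf{Step 1: Hopf differential.} Take $\xi$ in~\eqref{eq:critical_vectorial} of the form $\xi=(\mathrm{Re}\,\eta,\mathrm{Im}\,\eta)$ with $\eta$ complex. In two dimensions the stationarity condition is then equivalent to
\[
\bar\partial\Big(\sum_{j}(\partial u_j)^2\Big)=\tfrac14\,\partial\chi_{u=0}
\]
in the sense of distributions. So $h:=\sum_j(\partial u_j)^2-\frac14\chi_{u=0}$ is antiholomorphic up to conjugation, and in particular holomorphic-type, on $\B_1$. This relation says that $|\phi|^2$ is controlled, away from $\{u=0\}$, by a holomorphic function modulo a bounded jump.

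\textbf{Step 2: verifying~\eqref{eq_hyp1}--\eqref{eq_hyp2}.} Using the weak form of~\eqref{eq_structure}, namely $u_k\Delta u_l=0$, together with $\Delta u_l$ being a measure supported on $\{u=0\}$, the natural candidate on each ball $\B_{z,r}$ is
\[
\phi_{\B_{z,r}}:=\partial v, \qquad v \text{ the harmonic extension of } u|_{\partial\B_{z,r}}.
\]
Estimate~\eqref{eq_hyp2} is then the standard interior gradient estimate for the harmonic function $\partial v$.

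For~\eqref{eq_hyp1} there is no minimality to use, so I would instead exploit Step 1. Testing stationarity with the radial field $\xi(x)=(x-z)\psi(|x-z|)$ gives a Pohozaev/monotonicity-type identity. Combining it with the identity $\int|\nabla u|^2\varphi=-\int u\cdot\nabla u\cdot\nabla\varphi$, which follows from~\eqref{eq_structure_weak} with $k=l$ summed, I would try to show
\[
\int_{\B_{z,r}}|\nabla(u-v)|^2=\int_{\B_{z,r}}|\nabla u|^2-|\nabla v|^2 \lesssim |\B_{z,r}\cap\{u=0\}|.
\]
If the constant here is not $1$, rescaling $\phi$ fixes it.

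The \textbf{main obstacle} is exactly this energy-comparison inequality for a merely stationary map. The first tactic would be to write $u-v$ on the circle through its Fourier/Laurent expansion and compare the $L^2$ norm of the nonholomorphic part of $\phi$ with $h$. The holomorphic part of $\sum(\partial u_j)^2$ equals $\sum(\partial v_j)^2$ up to terms controlled by $\chi_{u=0}$, by Cauchy--Pompeiu applied to $\bar\partial h$. Remark~\ref{rem_counterexample_crit} indicates that~\eqref{eq_structure_weak} is essential precisely here: it is what makes $u$ harmonic where it is nonzero, so that $u-v$ is driven only by the zero set.

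\textbf{Step 3: removing the dependence on $\mathcal F_u$ and $\Vert\nabla u\Vert_{L^2}$.} Theorem~\ref{th_main} yields
\[
|\nabla u(z)|\le C\big(1+\mathrm{dist}(z,\mathcal F_u)^\alpha\Vert\nabla u\Vert_{L^2(\B_1)}\big)
\]
when $\mathcal F_u\neq\emptyset$. If $\mathcal F_u=\emptyset$, then $u$ is harmonic or vanishes identically by~\eqref{eq_structure_weak}. In either case interior estimates give $\Vert\nabla u\Vert_{L^\infty(\B_{1/2})}\lesssim\Vert u\Vert_{L^2}$.

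In general, the weak structure condition gives the Caccioppoli inequality $\Vert\nabla u\Vert_{L^2(\B_{3/4})}\lesssim\Vert u\Vert_{L^2(\B_1)}$, obtained by testing with $\varphi=\eta^2$ and $k=l$. Rescaling then replaces $\Vert\nabla u\Vert_{L^2(\B_1)}$ by $\Vert u\Vert_{L^2}$.

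It remains to handle points far from $\mathcal F_u$ relative to the scale. At such a point $z$, $u$ is harmonic on $\B_{z,\delta}$ with $\delta=\mathrm{dist}(z,\mathcal F_u)$, so the mean value property gives
\[
|\nabla u(z)|\lesssim \delta^{-2}\Vert u\Vert_{L^2}.
\]
This bound suffices when $\delta\gtrsim1$. When $\delta$ is small, the bound from Theorem~\ref{th_main} is already uniform because $\delta^\alpha\le1$. Combining the two regimes gives the stated estimate with $C=C(p)$.
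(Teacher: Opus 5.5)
There is a genuine gap, and it is the step you flagged as the main obstacle. The comparison $\int_{\B_{z,r}}|\nabla(u-v)|^2\lesssim|\B_{z,r}\cap\{u=0\}|$, with $v$ the harmonic replacement, is not just unproved: it is false for stationary solutions, even scalar ones satisfying \eqref{eq_structure_weak}.

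Take $p=1$ and $u(x_1,x_2)=|x_1|$.
\begin{itemize}
\item Since $|\nabla u|^2=1$ a.e., the left side of \eqref{eq:critical_vectorial} equals $\int(\partial_2\xi_2-\partial_1\xi_1)=0$. Since $\{u=0\}$ is Lebesgue-null, the right side vanishes too, so $u$ is stationary.
\item Moreover $\int\nabla(\varphi u)\cdot\nabla u=\int(\varphi+x_1\partial_1\varphi)=0$, so \eqref{eq_structure_weak} holds.
\item But $u$ is not harmonic, so $\int_{\B_1}|\nabla(u-v)|^2>0=|\B_1\cap\{u=0\}|$.
\end{itemize}

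In fact no choice of $\phi_{\B_{z,r}}$ can rescue Theorem~\ref{th_main} with $\phi=\partial u$. When $|\B_{z,r}\cap\{\phi=0\}|=0$, \eqref{eq_hyp1} forces $\phi=\phi_{\B_{z,r}}$. By \eqref{eq_hyp2} this function is Lipschitz on $\B_{z,\frac{1}{2}r}$, whereas here $\partial u=\frac{1}{2}\mathrm{sgn}(x_1)$ jumps. Stationarity gives no almost-minimality. Also, $\Delta u$ can be a nonzero measure carried by a null subset of $\{u=0\}$, which $|\B_{z,r}\cap\{u=0\}|$ cannot detect.

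The same example refutes your Step~3: here $\mathcal{F}_u=\emptyset$, yet $u$ is not harmonic. Condition \eqref{eq_structure_weak} only gives harmonicity where $|u|$ is bounded below. In Step~1, $\sum_j(\partial u_j)^2-\frac{1}{4}\chi_{u=0}$ is not (anti)holomorphic either. The correct object involves the Beurling transform, namely $\Ho{u}+\frac{1}{4}\mathcal{B}[\chi_{\B_1\cap\{u=0\}}]$.

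The paper instead applies Theorem~\ref{th_main} to $\Phi=4\Ho{u}$, with $\Phi_{\B_{z,r}}=4\Ho{u}+\mathcal{B}[\chi_{\B_{z,r}\cap\{u=0\}}]$. This is holomorphic in $\B_{z,r}$ by Lemma~\ref{lem_equ}, and \eqref{eq_hyp1} follows from the $L^2$-isometry of $\mathcal{B}$. The result is $\Ho{u}\in L^\infty$ (Lemma~\ref{lem_hopf_bounded}).

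For $p=1$ this is already the Lipschitz bound, since $|\nabla u|^2=4|\Ho{u}|$. For $p\geq 2$ it does not control $|\nabla u|$, because harmonic conformal maps have $\Ho{u}=0$ (compare Remark~\ref{rem_counterexample_crit}). The real content of the statement is therefore Theorem~\ref{th_crit_vectorial}: under \eqref{eq_structure_weak},
\[
\Vert\nabla u\Vert_{L^\infty(\B_{\frac{1}{2}})}\lesssim\Vert\Ho{u}\Vert_{L^\infty(\B_1)}^{\frac{1}{2}}+\Vert u\Vert_{L^2(\B_1)}.
\]
The paper proves it in four stages:
\begin{itemize}
\item a H\"older estimate based on $\Delta(|u|^2-2(u\cdot e)^2)\geq-8|\Ho{u}|$;
\item compactness of large normalized solutions toward harmonic conformal maps;
\item two decay lemmas;
\item the exclusion of singular Laurent modes when $u$ is close to a linear conformal map on an annulus.
\end{itemize}
Your proposal has no counterpart of this step, so it cannot reach the vectorial statement.
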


Consequences of the above results  in shape optimization and for overdetermined equations
are discussed in Section~\ref{sec_spectral}; see for instance Proposition~\ref{prop_overdetermined}. Furthermore in  Section~\ref{sec_proof_main_crit} we also give an alternative proof of the non-degeneracy of the support obtained in~\cite[Lem.~9.2]{KW25}.

\subsection{Idea of the proof}

As explained above, our regularity results are consequences of the abstract Theorem~\ref{th_main}, whose proof has two steps.
\begin{itemize}
\item[--] First we show that any map satisfying the assumptions of Theorem~\ref{th_main} is
of class \(BMO\).
\item[--] By the previous step, in order to bound \(\phi\) it is enough to bound its
average. We do this by a dichotomy: either the average is bounded, or the density of
\(\{\phi=0\}\) in \(\B_{1/2}\) is at most a fixed fraction of its density in \(\B_1\). The
second alternative cannot occur infinitely often at a Lebesgue point of \(\{\phi=0\}\),
which gives the desired bound. We note here that  similar dichotomy arguments were used in~\cite{FigaShah14}
and~\cite{DeSSavi20}.
\end{itemize}

Once Theorem~\ref{th_main} is established, the applications to minimizers follow by taking
\(\phi=\nabla^nu\) and \(\phi_{\B_{z,r}}=\nabla^n v_{\B_{z,r}}\), where \(v_{\B_{z,r}}\)
minimizes the free energy \(\int Q(\nabla^nv)\) among the functions with the same boundary
values as \(u\) on \(\partial \B_{z,r}\). With this choice,~\eqref{eq_hyp1} follows from
minimality and~\eqref{eq_hyp2} from the regularity estimates for \(v_{\B_{z,r}}\).

For Corollary~\ref{cor_lipschitz_crit} we cannot rely on minimality. The first variation
gives instead
\[
4\partial_{\bar z}(\partial_z u)^2-\partial_z\chi_{u\ne 0}=0,
\]
where \(\partial_{z}=\frac{1}{2}(\partial_x-\iu \partial_{y})\) and \(\partial_{\ov{z}}=\frac{1}{2}(\partial_x+\iu \partial_{y})\) are the complex derivatives. Introducing the
Beurling transform \(\mathcal{B}= \partial_{\bar z}^{-1}\circ \partial_{z}\), one checks that
\[
h=4(\partial_z u)^2-\mathcal{B}\chi_{u\ne 0}
\]
is holomorphic. We then follow the strategy above with \(\phi=(\partial_z u)^2\) and with a
local version of \(h\) as \(\phi_{\B_{z,r}}\).

The proof of Theorem~\ref{th_crit_vectorial_intro} is instead based on a different route, inspired by the regularity theory for minimal surfaces. We first show that for any \(2\)-convex function \(Q\) such that $\nabla  Q(0)=0$, the function \(C|z|^2+Q\circ u(z)\) is subharmonic for some sufficiently large $C$. This allows to prove Hölder regularity of the solution. Once this is proved we run various contradiction and compactness arguments to show that  either the norm of the gradient decays or the solution is close, in an annulus, to a linear conformal map. The final critical argument is to show that in the latter case the information propagates at smaller scales, eventually showing that the function can only vanish at the center of the ball, from which regularity easily follows. See Section~\ref{sec_proof_main_crit} for a more detailed account of the proof.

\subsection{Notation and organization of the paper}

In Section~\ref{sec_proof_th_main} we prove Theorem~\ref{th_main}. In Section~\ref{sec_vectorialBernoulli} we give the
applications of Theorem~\ref{th_main} to Bernoulli-type problems, namely Corollaries~\ref{cor_Q} and~\ref{cor_contrainte} and Proposition~\ref{prop_nondegeneracy}. In Section~\ref{sec_proof_main_crit} we deduce Corollary~\ref{cor_lipschitz_crit} from Theorem~\ref{th_main}, establish Theorem \ref{th_crit_vectorial_intro}, and study the non-degeneracy of stationary solutions. Finally, in Section~\ref{sec_spectral} we show how our results extend to several shape optimization problems
coming from spectral inequalities.

We write
\[a\lesssim b\]
when \(a\leq Cb\), where:
\begin{itemize}
\item in Section~\ref{sec_proof_th_main}, \(C>0\) depends only on \(d\) and \(K\);
\item in Sections~\ref{sec_proof_main_crit} and~\ref{sec_spectral}, \(C>0\) is universal.
\end{itemize}
For \(f\in L^2(\B_{r})\), possibly vector-valued, we define its deviation in \(\B_r\) as
\[ D(f,r)=\sqrt{\fint_{\B_r} \left|f-\fint_{\B_r}f\right|^2}.\]

\section{Proof of Theorem~\ref{th_main}}\label{sec_proof_th_main}
In all this section, we fix \(d,p\geq 1\), \(K\geq 0\). We let \(\M\) be the set of functions \(\phi\in L^2(\B_1,\R^p)\) that satisfy the hypothesis of Theorem~\ref{th_main} with the constant \(K\). We will use extensively the fact that if \(\phi\in\M\), \(\B_{z,r}\subset\B_1\), then the function
\[\zeta\in\B_1\mapsto \phi(z+r\zeta)\]
also belongs to \(\M\) (with the same constant).
\begin{lemma}\label{lem_dev_decrease}
There exist \(\tau\in \left(0,\frac{1}{2}\right]\), \(C_1>0\) such that the following holds: let \(\phi\in\M\), then
\[ D(\phi,\tau )\leq \frac{1}{2} D( \phi,1)+C_1\left(\frac{|\B_{1}\cap\{ \phi=0\}|}{|\B_1|}\right)^\frac{1}{2}.
\]
\end{lemma}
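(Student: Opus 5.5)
We compare $\phi$ on $\B_1$ with the approximating function $\psi:=\phi_{\B_1}$ given by the hypotheses (with $z=0$, $r=1$). Two facts drive the argument. Hypothesis \eqref{eq_hyp1} says $\psi$ is $L^2$-close to $\phi$, with error controlled by $|\B_1\cap\{\phi=0\}|$. Hypothesis \eqref{eq_hyp2} says $\psi$ is Lipschitz on $\B_{1/2}$, with Lipschitz constant controlled by its own deviation on $\B_1$. The deviation of $\phi$ on a small ball $\B_\tau$ will therefore come from two sources: the oscillation of $\psi$, which is small when $\tau$ is small, and the error $\phi-\psi$, which is small in terms of the density of the zero set.

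First we record the elementary properties of $D$. Since $D(f,r)=\min_{c}\bigl(\fint_{\B_r}|f-c|^2\bigr)^{1/2}$, it is subadditive, $D(f+g,r)\le D(f,r)+D(g,r)$. It is also bounded by the normalized $L^2$ norm, $D(g,r)\le \bigl(\fint_{\B_r}|g|^2\bigr)^{1/2}$. Applying these to $\phi=\psi+(\phi-\psi)$ on $\B_\tau$ and using \eqref{eq_hyp1} gives
\[
D(\phi,\tau)\le D(\psi,\tau)+\Bigl(\frac{1}{|\B_\tau|}\int_{\B_1}|\phi-\psi|^2\Bigr)^{1/2}\le D(\psi,\tau)+\tau^{-d/2}\Bigl(\frac{|\B_1\cap\{\phi=0\}|}{|\B_1|}\Bigr)^{1/2}.
\]

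Next we bound $D(\psi,\tau)$ for $\tau\le\frac12$. On $\B_\tau$ we have $|\psi(x)-\psi(0)|\le \tau\|\nabla\psi\|_{L^\infty(\B_{1/2})}$, so by \eqref{eq_hyp2}
\[
D(\psi,\tau)\le \tau\|\nabla\psi\|_{L^\infty(\B_{1/2})}\le K\tau\, D(\psi,1).
\]
Then we return from $\psi$ to $\phi$ on the unit ball using subadditivity and \eqref{eq_hyp1} again:
\[
D(\psi,1)\le D(\phi,1)+\Bigl(\fint_{\B_1}|\phi-\psi|^2\Bigr)^{1/2}\le D(\phi,1)+\Bigl(\frac{|\B_1\cap\{\phi=0\}|}{|\B_1|}\Bigr)^{1/2}.
\]

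Finally we combine the three estimates:
\[
D(\phi,\tau)\le K\tau\,D(\phi,1)+(K\tau+\tau^{-d/2})\Bigl(\frac{|\B_1\cap\{\phi=0\}|}{|\B_1|}\Bigr)^{1/2}.
\]
We choose $\tau=\min\{\frac12,\frac1{2K}\}$, with $\tau=\frac12$ if $K=0$, so that $K\tau\le\frac12$, and we set $C_1=\frac12+\tau^{-d/2}$. This gives the lemma with constants depending only on $d$ and $K$.

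There is no real obstacle in this argument. The only points needing care are that \eqref{eq_hyp2} controls the gradient of $\psi$ only on $\B_{1/2}$, which forces $\tau\le\frac12$, and that the deviation of $\psi$ has to be transferred back to that of $\phi$. This transfer is exactly where the additive density term arises.
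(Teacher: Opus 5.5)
Your proof is correct and follows essentially the same route as the paper. Both arguments split $\phi=\phi_{\B_1}+(\phi-\phi_{\B_1})$, bound the oscillation of $\phi_{\B_1}$ on $\B_\tau$ via \eqref{eq_hyp2}, convert $D(\phi_{\B_1},1)$ back to $D(\phi,1)$ with the triangle inequality, and control every error term by \eqref{eq_hyp1}; the only difference is that you track the constants explicitly ($\tau=\min\{\frac12,\frac1{2K}\}$, $C_1=\frac12+\tau^{-d/2}$), whereas the paper works with $\lesssim$ and takes $\tau$ small at the end.
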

\begin{proof}
Let \(\tau\in\left(0,\frac{1}{2}\right)\) to be fixed. We estimate the deviation of \(\phi\) at the scale \(\tau\):
\begin{align*}
 D(\phi,\tau)&\lesssim  D(\phi_{\B_1},\tau)+ D(\phi-\phi_{\B_1},\tau)&\text{by triangle ineq.}\\
&\lesssim \tau\Vert \nabla \phi_{\B_1}\Vert_{L^\infty(\B_{\tau})}+\tau^{-\frac{d}{2}}\Vert \phi-\phi_{\B_1}\Vert_{L^2(\B_1)}\\
&\lesssim \tau D(\phi_{\B_1},1)+\tau^{-\frac{d}{2}}\Vert \phi-\phi_{\B_1}\Vert_{L^2(\B_1)}&\text{ by eq. }\eqref{eq_hyp2}\\
&\lesssim \tau D(\phi,1)+\tau^{-\frac{d}{2}}\Vert \phi-\phi_{\B_1}\Vert_{L^2(\B_1)}&\text{by triangle ineq.}\\ &\lesssim\tau D(\phi,1)+\tau^{-\frac{d}{2}}|\B_{1}\cap\{\phi=0\}|^\frac{1}{2}&\text{ by eq. }\eqref{eq_hyp1}
\end{align*}
This implies the result for a sufficiently small \(\tau\in\left(0,\frac{1}{2}\right)\), for some \(C_1>0\). Here \(\tau\) and \(C_1\) are chosen depending only on \(d,K\).
\end{proof}

\begin{corollary}\label{cor_BMO}
There exist \(C_2,\alpha>0\) such that for any \(\phi\in\M\), for any \(z\in \B_{\frac{3}{4}}\), \(r\in\left(0,1-|z|\right)\), we have
\[ D( \phi(z+\cdot) , r)\leq C_2\left(1+r^\alpha\Vert \phi\Vert_{L^2(\B_1)}\right).\]
\end{corollary}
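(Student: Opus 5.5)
Our plan is to iterate Lemma~\ref{lem_dev_decrease} along the geometric sequence of radii $r_k=\tau^k\rho$ centered at $z$, using the scale invariance of $\M$. Fix $z\in\B_{\frac34}$ and set $\rho=1-|z|\geq\frac14$. For $s\le\rho$, the rescaled function $\zeta\mapsto\phi(z+s\zeta)$ belongs to $\M$ (we may take $\B_{z,s}\subset\B_1$ after shrinking $\rho$ slightly, or pass to the limit at the end). Applying Lemma~\ref{lem_dev_decrease} to it and using $|\B_{z,s}\cap\{\phi=0\}|/|\B_{z,s}|\le 1$, we get
\[
D(\phi(z+\cdot),\tau s)\leq \tfrac12 D(\phi(z+\cdot),s)+C_1 .
\]
We write $a_k=D(\phi(z+\cdot),\tau^k\rho)$, so that $a_{k+1}\le \frac12 a_k+C_1$. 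By induction, $a_k\le 2^{-k}a_0+2C_1$.

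Next we bound the initial term: $a_0\le \big(\fint_{\B_{z,\rho}}|\phi|^2\big)^{1/2}\lesssim \rho^{-d/2}\Vert\phi\Vert_{L^2(\B_1)}\lesssim\Vert\phi\Vert_{L^2(\B_1)}$, since $\rho\ge\frac14$. Define $\alpha=\log 2/\log(1/\tau)>0$, so that $2^{-k}=\tau^{k\alpha}\lesssim (\tau^k\rho)^\alpha$. This gives
\[
a_k\lesssim 1+(\tau^k\rho)^\alpha\Vert\phi\Vert_{L^2(\B_1)} .
\]

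For a general $r\in(0,\rho)$, we choose $k$ with $\tau^{k+1}\rho<r\le\tau^k\rho$. The deviation is comparable across comparable radii: since $\fint_{\B_r}f$ minimizes $c\mapsto\fint_{\B_r}|f-c|^2$,
\[
D(f,r)\le\Big(\fint_{\B_r}|f-c_{k}|^2\Big)^{1/2}\le (\tau^k\rho/r)^{d/2}a_k\le\tau^{-d/2}a_k,
\]
where $c_k$ is the average of $f$ on $\B_{\tau^k\rho}$. Combining with the bound on $a_k$ and $(\tau^k\rho)^\alpha\le\tau^{-\alpha}r^\alpha$ yields the claim with $C_2$ depending only on $d,K$.

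We expect no serious obstacle. The only points needing care are the admissibility of the balls, since we need $\B_{z,s}\subset\B_1$ (we use $s<1-|z|$, or an approximation argument with $\rho'\uparrow\rho$), and tracking that the constants depend only on $d,K$.
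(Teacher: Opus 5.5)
Your proof is correct and is essentially the paper's argument: you iterate Lemma~\ref{lem_dev_decrease} on rescaled copies of $\phi$ (which stay in $\M$), bound the density terms by $1$, and convert $2^{-k}$ into a power $r^\alpha$ with $\alpha=\log 2/\log(1/\tau)$. The only differences are cosmetic. The paper reduces to $z=0$ and iterates backwards from $r$ to $\tau^{-k}r\in(\tau,1]$, whereas you iterate forwards from $\rho=1-|z|$ and compare radii at the end. Your approximation caveat is also unnecessary, since the open ball $\B_{z,1-|z|}$ already lies in $\B_1$.
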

\begin{proof}
It is sufficient to prove the result for \(z=0\). Let \(\tau,C_1\) be given by Lemma~\ref{lem_dev_decrease}. Let \(k\in\N\) be chosen such that
\[\tau^{k+1}< r\leq\tau^k\]
i.e. \(k=\left\lfloor \frac{\log(r)}{\log(\tau)}\right\rfloor\). Then iterating the previous lemma we have
\begin{align*}
 D(\phi,r)&\leq \frac{1}{2^k} D(\phi,\tau^{-k}r)+C_1\sum_{\ell=1}^{k}\frac{1}{2^{\ell-1}}\left(\frac{|\B_{\tau^{-\ell}r}\cap\{\phi=0\}|}{|\B_{\tau^{-\ell}r}|}\right)^\frac{1}{2}\\
&\leq \frac{1}{2^k} D(\phi,\tau^{-k}r)+C_1\sum_{\ell=1}^{k}\frac{1}{2^{\ell-1}}\\
&\lesssim r^{\alpha}  D(\phi,1)+1\text{ with }\alpha:=\frac{\log\left(\frac{1}{2}\right)}{\log(\tau)}>0.
\end{align*}
This implies the result, since \( D(\phi,1)\lesssim \Vert\phi\Vert_{L^2(\B_1)}\).
\end{proof}
Corollary~\ref{cor_BMO} is sufficient to obtain that \([\phi]_{\mathrm{BMO}(\B_{\frac{1}{2}})}\lesssim 1+\Vert \phi\Vert_{L^1(\B_1)}\). This is not sufficient to prove that \(\phi\) is bounded since mean oscillation bounds do not imply \(L^\infty\) bounds, and the next lemma is the crucial point to obtain an estimate beyond BMO: when the average of \(\phi\) is sufficiently large compared to its deviation, then the measure of the zero set \(\{ \phi=0\}\) must decrease rapidly at smaller radii, which will in turn improve the oscillation bound at smaller scales.
\begin{lemma}\label{lem_supp_decrease}
There exists \(M_1>0\) such that for any \(\phi\in\M\), if
\[\left|\fint_{\B_1}\phi\right|\geq M_1\left(D(\phi,1)+1\right),\]
then
\[\frac{|\B_\tau\cap\{\phi=0\}|}{|\B_{\tau}|}\leq \frac{1}{4}\frac{|\B_1\cap\{\phi=0\}|}{|\B_1|},\]
where \(\tau\in (0,\frac{1}{2}]\) is defined in Lemma~\ref{lem_dev_decrease}.
\end{lemma}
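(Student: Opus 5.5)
The plan is to compare $\phi$ on $\B_1$ with the auxiliary function $\psi:=\phi_{\B_1}$ given by the hypotheses of Theorem~\ref{th_main}. Write $Z:=\B_1\cap\{\phi=0\}$ and $a:=\fint_{\B_1}\phi$. The argument has three steps:
\begin{itemize}
\item show that $\psi$ is pointwise close to $a$ on $\B_{\frac12}$;
\item conclude that $|\psi|\geq |a|/2$ on $\B_{\frac12}$;
\item use \eqref{eq_hyp1} to bound $|\B_\tau\cap\{\phi=0\}|$.
\end{itemize}

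\textbf{Step 1: $\psi$ has oscillation of order $D(\phi,1)+1$.} By \eqref{eq_hyp1}, $\Vert\phi-\psi\Vert_{L^2(\B_1)}\leq |Z|^{\frac12}\leq|\B_1|^{\frac12}$. Hence, by the triangle inequality,
\[
D(\psi,1)\leq D(\phi,1)+|\B_1|^{-\frac12}\Vert\phi-\psi\Vert_{L^2(\B_1)}\leq D(\phi,1)+1,
\]
and similarly $\bigl|\fint_{\B_1}\psi-a\bigr|\leq 1$. By \eqref{eq_hyp2} with $r=1$, $\Vert\nabla\psi\Vert_{L^\infty(\B_{\frac12})}\leq K(D(\phi,1)+1)$. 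Consequently, for every $x\in\B_{\frac12}$, $\psi(x)$ lies within $C(D(\phi,1)+1)$ of $\fint_{\B_{\frac12}}\psi$, with $C$ depending only on $d,K$. Moreover,
\[
\Bigl|\fint_{\B_{\frac12}}\psi-\fint_{\B_1}\psi\Bigr|\leq 2^{\frac d2}D(\psi,1).
\]
Combining these, $|\psi(x)-a|\leq C'(D(\phi,1)+1)$ for all $x\in\B_{\frac12}$, with $C'=C'(d,K)$.

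\textbf{Step 2: $\psi$ stays away from zero.} Choose $M_1\geq 2C'$. The assumption $|a|\geq M_1(D(\phi,1)+1)$ then gives $|\psi|\geq |a|/2$ on $\B_{\frac12}\supset\B_\tau$. In addition $|a|\geq M_1$, since $D(\phi,1)+1\geq 1$.

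\textbf{Step 3: the zero set shrinks.} On $\B_\tau\cap\{\phi=0\}$ we have $|\phi-\psi|=|\psi|\geq |a|/2$. Integrating and using \eqref{eq_hyp1},
\[
\frac{a^2}{4}\,|\B_\tau\cap\{\phi=0\}|\leq\int_{\B_1}|\phi-\psi|^2\leq |Z|.
\]
Therefore
\[
\frac{|\B_\tau\cap\{\phi=0\}|}{|\B_\tau|}\leq \frac{4}{M_1^2\tau^d}\,\frac{|Z|}{|\B_1|}.
\]
Enlarging $M_1$ so that $M_1^2\geq 16\tau^{-d}$ gives the claimed factor $\frac14$. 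Here $\tau$ is the fixed constant from Lemma~\ref{lem_dev_decrease}.

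The only point requiring care is Step 1. One must make sure that both the Lipschitz bound and the comparison of averages are controlled by $D(\phi,1)+1$, with no dependence on $|a|$ itself. Each such bound costs only an additive $|Z|^{\frac12}\lesssim 1$, because \eqref{eq_hyp1} controls $\Vert\phi-\psi\Vert_{L^2}$ by $|Z|^{\frac12}$. This is why the hypothesis of the lemma carries the extra $+1$.
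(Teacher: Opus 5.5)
Your proof is correct and follows essentially the same route as the paper. You compare $\phi$ with $\phi_{\B_1}$, use \eqref{eq_hyp1} and \eqref{eq_hyp2} to show that $\phi_{\B_1}$ stays within $C(D(\phi,1)+1)$ of $\fint_{\B_1}\phi$ on $\B_{\frac12}$, and then restrict \eqref{eq_hyp1} to $\B_\tau\cap\{\phi=0\}$; the only cosmetic difference is that you use the lower bound $|\phi_{\B_1}|\geq|a|/2\geq M_1/2$, whereas the paper directly requires $\inf_{\B_\tau}|\phi_{\B_1}|\geq 2\tau^{-d/2}$.
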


\begin{proof}
We start from inequality~\eqref{eq_hyp1}:
\[\int_{\B_{1}}|\phi-\phi_{\B_1}|^2\leq |\B_{1}\cap\{ \phi=0\}|.\]
By restricting the integral on the left-hand side to \(\B_\tau\cap\{\phi=0\}\), we obtain
\begin{equation*}\label{eq_aux_1}
|\B_{\tau}\cap\{ \phi=0\}|\left(\inf_{\B_{\tau}}\left|\phi_{\B_1}\right|\right)^2\leq |\B_{1}\cap\{ \phi=0\}|.
\end{equation*}
We estimate the factor on the left-hand side:
\begin{align*}
\inf_{\B_{\tau}}\left|\phi_{\B_1}\right|&\geq \left|\fint_{\B_\frac{1}{2}}\phi_{\B_1}\right|-\left\Vert \nabla \phi_{\B_1}\right\Vert_{L^\infty(\B_{\frac{1}{2}})}&\text{ since }\tau\leq \frac{1}{2}\\
&\geq  \left|\fint_{\B_1}\phi\right|-\fint_{\B_\frac{1}{2}}\left|\phi_{\B_1}-\phi\right|-\fint_{\B_\frac{1}{2}}\left|\phi-\fint_{\B_1}\phi\right|- \left\Vert \nabla \phi_{\B_1}\right\Vert_{L^\infty(\B_{\frac{1}{2}})}&\text{ by triangle ineq. }\\
&\geq\left|\fint_{\B_1}\phi\right|-2^\frac{d}{2}-2^\frac{d}{2}D(\phi,1)-K D(\phi_{\B_1},1)&\text{ by eq. }\eqref{eq_hyp1},~\eqref{eq_hyp2}\\
&\geq\left|\fint_{\B_1}\phi\right|-2^\frac{d}{2}-2^\frac{d}{2}D(\phi,1)-K D(\phi_{\B_1}-\phi,1)-K D(\phi,1)\\
&\geq\left|\fint_{\B_1}\phi\right|-\left(2^\frac{d}{2}+2K\right)-\left(2^\frac{d}{2}+K\right)D(\phi,1)&\text{ by eq. }\eqref{eq_hyp1}.
\end{align*}
This is larger than \(2\tau^{-\frac{d}{2}}\) when the ratio \(\frac{\left|\fint_{\B_1}\phi\right|}{D(\phi,1)+1}\) is sufficiently large, which proves the result.
\end{proof}
The previous lemma may then be iterated as follows.
\begin{lemma}\label{lem_supp_decrease_iterate}
There exists \(M_2>0\) such that for any \(\phi\in\M\), if
\[\left|\fint_{\B_1}\phi\right|\geq M_2\left( D(\phi,1)+1\right),\]
then the set \(\{\phi=0\}\) has Lebesgue density \(0\) at the origin.
\end{lemma}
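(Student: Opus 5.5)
We will iterate Lemma~\ref{lem_supp_decrease} along the dyadic-type scales \(\tau^k\), using the rescaled functions \(\phi_k(\zeta):=\phi(\tau^k\zeta)\), which all belong to \(\M\). Write
\[
a_k:=\left|\fint_{\B_{\tau^k}}\phi\right|,\qquad D_k:=D(\phi,\tau^k),\qquad \theta_k:=\frac{|\B_{\tau^k}\cap\{\phi=0\}|}{|\B_{\tau^k}|}\le 1 .
\]
We will show by induction that, for \(M_2\) large enough, the hypothesis \(a_k\geq M_1(D_k+1)\) of Lemma~\ref{lem_supp_decrease} holds at every step. This gives \(\theta_{k+1}\le \frac14\theta_k\), hence \(\theta_k\le 4^{-k}\). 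The density of \(\{\phi=0\}\) along the radii \(\tau^k\) then tends to \(0\), and since \(|\B_r\cap\{\phi=0\}|/|\B_r|\le \tau^{-d}\theta_k\) for \(\tau^{k+1}<r\le\tau^k\), the density vanishes along all radii.

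The induction keeps track of three quantities. For the deviations, Lemma~\ref{lem_dev_decrease} applied to \(\phi_k\) gives \(D_{k+1}\le \frac12 D_k+C_1\theta_k^{1/2}\). If \(\theta_j\le 4^{-j}\) for \(j\le k\), this yields \(D_{k}\le 2^{-k}D_0+C\) with \(C\) depending only on \(d,K\). In fact the bound is better, \(D_k\le 2^{-k}(D_0+C k)\), but the crude bound suffices. For the averages, we compare consecutive means:
\[
\left|\fint_{\B_{\tau^{k+1}}}\phi-\fint_{\B_{\tau^k}}\phi\right|\le \tau^{-d/2}D_k ,
\]
so that \(a_{k}\ge a_0-\tau^{-d/2}\sum_{j<k}D_j\ge a_0-\tau^{-d/2}\bigl(2D_0+Ck\bigr)\).

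The obstacle is visible here. The sum \(\sum_j D_j\) is bounded only if the \(D_j\) are summable, whereas the crude bound \(D_j\le 2^{-j}D_0+C\) gives a linear growth in \(k\) that would eventually destroy \(a_k\). The fix is to use the decay of \(\theta_j\) more carefully. Unrolling the recursion gives
\[
D_k\le 2^{-k}D_0+C_1\sum_{j<k}2^{-(k-1-j)}2^{-j}\lesssim 2^{-k}(D_0+k),
\]
which is summable: \(\sum_j D_j\lesssim D_0+1\). Hence \(a_k\ge a_0-C'(D_0+1)\) for all \(k\), with \(C'=C'(d,K)\). We also have \(D_k\le D_0+C\).

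Choosing \(M_2\ge M_1(1+C)+C'+\dots\) large, the assumption \(a_0\ge M_2(D_0+1)\) then implies
\[
a_k\ge (M_2-C')(D_0+1)\ge M_1(D_k+1).
\]
This closes the induction: Lemma~\ref{lem_supp_decrease} applies at step \(k\), so \(\theta_{k+1}\le\frac14\theta_k\). The main care needed is in this bookkeeping, namely making sure that the induction hypothesis \(\theta_j\le 4^{-j}\theta_0\) is what feeds the summable bound on \(D_j\), which in turn keeps \(a_k\) large.
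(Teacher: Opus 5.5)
Your proof is correct and follows essentially the paper's argument: a strong induction on the hypothesis of Lemma~\ref{lem_supp_decrease} at the scales \(\tau^k\). There the volume decay \(\theta_j\le 4^{-j}\) feeds the summable bound \(D_k\le 2^{-k}D_0+C_1k\,2^{1-k}\), which controls the drift \(\tau^{-d/2}\sum_{j<k}D_j\) of the averages, and the cruder bound \(D_k\le D_0+C\) closes the induction. Your remark that ``the crude bound suffices'' is contradicted a few lines later, but your final bookkeeping uses the correct summable bound, and passing from the radii \(\tau^k\) to all radii is a harmless addition.
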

\begin{proof}
Let \(\tau, C_1\) be defined by Lemma~\ref{lem_dev_decrease}, \(M_1\) defined by Lemma~\ref{lem_supp_decrease}. Let \(M_2>M_1\) to be fixed, assume that
\[\left|\fint_{\B_1}\phi\right|\geq M_2\left( D(\phi,1)+1\right)\]
We prove that if \(M_2\) is sufficiently large, then for any \(k\in\N^*\), we have
\begin{align*}
\left|\fint_{\B_{\tau^k}}\phi\right|&\geq M_1\left( D(\phi,\tau^k)+1\right).
\end{align*}
We proceed by induction. Assume that for some \(k\in\N^*\), we have for any \(\ell=0,1,2,\hdots,k-1\):
\begin{equation}\label{eq_inductionl}
\left|\fint_{\B_{\tau^\ell}}\phi\right|\geq M_1\left( D(\phi,\tau^\ell)+1\right)
\end{equation}
Then for every \(\ell\in \{0,1,\hdots,k-1\}\) we have
\begin{align*}
\frac{|\B_{\tau^{\ell+1}}\cap\{\phi=0\}|}{|\B_{\tau^{\ell+1}}|}&\leq \frac{1}{4}\frac{|\B_{\tau^{\ell}}\cap\{\phi=0\}|}{|\B_{\tau^\ell}|}\\
 D(\phi,\tau^{\ell+1})&\leq \frac{1}{2} D(\phi,\tau^{\ell})+C_1\left(\frac{|\B_{\tau^\ell}\cap\{\phi=0\}|}{|\B_{\tau^\ell}|}\right)^\frac{1}{2}\\
\left|\fint_{\B_{\tau^{\ell+1}}}\phi\right|&\geq \left|\fint_{\B_{\tau^\ell}}\phi\right|-\tau^{-\frac{d}{2}} D(\phi,\tau^{\ell})
\end{align*}
Indeed the first estimate is obtained by Lemma~\ref{lem_supp_decrease}, the second by Lemma~\ref{lem_dev_decrease}, and the third is direct by the triangle and Cauchy--Schwarz inequalities. This implies, for every \(\ell=0,1,\hdots,k\):
\begin{equation}\label{eq_aux_decreasevolume}
|\B_{\tau^{\ell}}\cap\{\phi=0\}|\leq 4^{-\ell}|\B_{\tau^{\ell}}|,
\end{equation}
\begin{equation}\label{eq_aux_deviation}
\begin{split}
 D(\phi,\tau^{\ell})&\leq 2^{-\ell} D(\phi,1)+C_1\sum_{j=0}^{\ell-1}2^{-j}\left(\frac{|\B_{\tau^{\ell-j-1}}\cap\{\phi=0\}|}{|\B_{\tau^{\ell-j-1}}|}\right)^\frac{1}{2}\\
&\leq 2^{-\ell} D(\phi,1)+C_1\sum_{j=0}^{\ell-1}2^{1-\ell}\text{ by eq. }\eqref{eq_aux_decreasevolume}\\
&= 2^{-\ell} D(\phi,1)+C_1 \ell 2^{1-\ell},
\end{split}
\end{equation}
and
\begin{align*}
\left|\fint_{\B_{\tau^k}}\phi\right|&\geq \left|\fint_{\B_{1}}\phi\right|-\tau^{-\frac{d}{2}}\sum_{\ell=0}^{k-1} D(\phi,\tau^\ell)\\
&\geq  \left|\fint_{\B_{1}}\phi\right|-\tau^{-\frac{d}{2}}\sum_{\ell=0}^{k-1}\left(2^{-\ell} D(\phi,1)+C_1 \ell 2^{1-\ell}\right)\text{ by eq. }\eqref{eq_aux_deviation}\\
&\geq M_2\left( D(\phi,1)+1\right)-2\tau^{-\frac{d}{2}} D(\phi,1)-4C_1\tau^{-\frac{d}{2}}
\end{align*}
Combining this with~\eqref{eq_aux_deviation}, which simplifies to \( D(\phi,\tau^k)\leq  D(\phi,1)+C_1\), we obtain
\begin{align*}
\left|\fint_{\B_{\tau^k}}\phi\right|-M_1\left( D(\phi,\tau^k)+1\right)&\geq \left(M_2-2\tau^{-\frac{d}{2}}-M_1\right) D(\phi,1)+\left(M_2-4C_1\tau^{-\frac{d}{2}}-M_1\left(1+C_1\right)\right)
\end{align*}
For a sufficiently large \(M_2>0\) that depends only on \(d,K\), the right-hand side is positive.
Thus, by induction the equation~\eqref{eq_inductionl} holds for every \(k\in\N\), and by equation~\eqref{eq_aux_decreasevolume} we have
\[
\forall k\in\N,\ |\B_{\tau^{k}}\cap\{\phi=0\}|\leq 4^{-k}|\B_{\tau^{k}}|.\]
\end{proof}

We deduce the proof of the main result, Theorem~\ref{th_main}.
\begin{proof}[Proof of Theorem~\ref{th_main}]
Let \(\phi\) that satisfies the hypothesis of Theorem~\ref{th_main}. Let \(\tau\) be defined by Lemma~\ref{lem_dev_decrease}, \(\alpha,C_2\) defined by Corollary~\ref{cor_BMO}, \(M_2\) defined by Lemma~\ref{lem_supp_decrease_iterate}. Let \(z\in\B_{\frac{1}{2}}\), and
\[r=2\mathrm{dist}(z,\mathcal{F}_\phi).\]
Assume \(r\geq \frac{1}{2}\), and that \(\phi\) is not identically zero on \(\B_{z,\frac{1}{2}}\). By~\eqref{eq_hyp1} we have \(\phi=\phi_{\B_{z,\frac{1}{4}}}\), and by equation~\eqref{eq_hyp2} we have
\[\Vert \nabla \phi\Vert_{L^\infty(\B_{z,\frac{1}{8}})}\lesssim  D\left(\phi(z+\cdot),\frac{1}{4}\right)\lesssim \Vert \phi\Vert_{L^2(\B_1)}.\]
In particular this implies
\[|\phi(z)|\leq \left|\fint_{\B_{z,\frac{1}{8}}}\phi\right|+\frac{1}{8}\Vert \nabla \phi\Vert_{L^\infty(\B_{z,\frac{1}{8}})}\lesssim 1+\Vert \phi\Vert_{L^2(\B_1)}\]
which is the result.

Assume now \(r<\frac{1}{2}\).  Let \(\rho\in \left(r,\frac{1}{2}\right)\). Let \(\zeta\in\B_{z,\frac{1}{2}\rho}\), then \(\zeta\in\B_{\frac{3}{4}}\) so by Corollary~\ref{cor_BMO} we have
\begin{align*}
 D(\phi(z+\cdot),\rho)&\leq  C_2\left(1+\rho^\alpha\Vert \phi\Vert_{L^2(\B_1)}\right),\\
 D(\phi(\zeta+\cdot),\frac{1}{2}\rho)&\leq  C_2\left(1+\rho^\alpha\Vert \phi\Vert_{L^2(\B_1)}\right).
\end{align*}
Then
\begin{align*}
\left|\fint_{\B_{\zeta,\frac{1}{2}\rho}}\phi\right|-M_2 \left( D\left(\phi(\zeta+\cdot),\frac{1}{2}\rho\right)+1\right)&\geq \left|\fint_{\B_{z,\rho}}\phi\right|-2^{\frac{d}{2}} D\left(\phi(z+\cdot),\rho\right)-M_2\left( D\left(\phi(\zeta+\cdot),\frac{1}{2}\rho\right)+1\right)\\
&\geq \left|\fint_{\B_{z,\rho}}\phi\right|-C_3\left(1+\rho^\alpha\Vert \phi\Vert_{L^2(\B_1)}\right)
\end{align*}
where \(C_3:=\left(M_2+2^\frac{d}{2}\right)C_2+M_2\). Thus, if \(\left|\fint_{\B_{z,\rho}}\phi\right|\geq C_3\left(1+\rho^\alpha\Vert \phi\Vert_{L^2(\B_1)}\right)\), then by Lemma~\ref{lem_supp_decrease_iterate} applied to \(\phi(\zeta+\frac{1}{2}\rho\cdot)\) for every \(\zeta\in\B_{z,\frac{1}{2}\rho}\), we obtain that \(\{\phi=0\}\) has measure \(0\) in \(\B_{z,\frac{1}{2}\rho}\), which contradicts the fact that \(\mathcal{F}_\phi\cap\B_{z,\frac{1}{2}\rho}\neq \emptyset\) by definition of \(r\).
By this contradiction, we obtain
\begin{equation}\label{eq_aux_control_avg}
\forall \rho \in\left(r,\frac{1}{2}\right),\ \left|\fint_{\B_{z,\rho}}\phi\right|\leq C_3\left(1+\rho^\alpha\Vert \phi\Vert_{L^2(\B_1)}\right)
\end{equation}
To conclude, we differentiate the cases \(r>0\) and \(r=0\):
\begin{itemize}
\item \(r>0\): Since \(\phi=\phi_{\B_{z,\frac{1}{2}r}}\), we have by equation~\eqref{eq_hyp2}:
\begin{align*}
| \phi(z)|&\lesssim \left|\fint_{\B_{z,\frac{1}{4}r}} \phi\right|+r\Vert\nabla \phi\Vert_{L^\infty(\B_{z,\frac{1}{4}r})}\\
&\lesssim \left|\fint_{\B_{z,r}}\phi\right|+ D(\phi(z+\cdot),r)+ D(\phi(z+\cdot),\frac{1}{2}r)\text{ by eq. }\eqref{eq_hyp2}\text{ since }\phi=\phi_{\B_{z,\frac{1}{2}r}}\text{ in }\B_{z,\frac{1}{2}r}\\
&\lesssim 1+r^\alpha \Vert \phi\Vert_{L^2(\B_1)}\text{ by eq. }\eqref{eq_aux_control_avg}\text{ with }\rho\to r\text{ and Corollary }\ref{cor_BMO}
\end{align*}
This concludes the proof.
\item \(r=0\): in this case, taking \(\rho\to 0\) in~\eqref{eq_aux_control_avg}, we obtain (for every \(z\) that is a  Lebesgue point of \(\phi\)): \(|\phi(z)|\leq C_3\).
\end{itemize}
\end{proof}
\section{Applications to minimizers of generalized Bernoulli problems}\label{sec_vectorialBernoulli}
In this section, we explain how the main result, Theorem~\ref{th_main}, applies to general Bernoulli problems, in particular we prove Corollaries~\ref{cor_Q} and~\ref{cor_contrainte}.
\subsection{Vectorial and higher order Bernoulli problems}
We first remind some known results on linear elliptic systems: in this subsection, we fix \(d,n,p\geq 1\), and \(Q\) a quadratic form on \((\R^d)^{\otimes n}\otimes \R^p\) that satisfies the Legendre--Hadamard condition~\eqref{eq_legendrehadamard}.
\begin{lemma}\label{lem_Cacciopoli}
There exists \(C>0\) such that the following holds. Let \(v\in H^n(\B_1,\R^p)\) be a local minimizer of
\[w\in H^n(\B_1,\R^p)\mapsto \int_{\B_1}Q(\nabla^n w).\]
Then
\[\Vert \nabla v\Vert_{L^{\infty}(\B_{\frac{1}{2}})}\leq C\Vert v\Vert_{L^2(\B_1)}.\]
\end{lemma}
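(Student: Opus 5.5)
The plan is to use the standard interior regularity theory for constant-coefficient elliptic systems satisfying the Legendre--Hadamard condition. The argument rests on three facts: translation invariance, Caccioppoli inequalities for all derivatives, and Sobolev embedding.

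First, $v$ solves the Euler--Lagrange system. Write $Q(\xi)=A\xi:\xi$ with $A$ symmetric. Local minimality against $v+tw$, $w\in H^n_0(\B_1,\R^p)$, gives
\[
\int_{\B_1} A\nabla^n v:\nabla^n w=0 .
\]
This is a linear constant-coefficient system, so difference quotients $\Delta_h^{e} v$ are again solutions on slightly smaller balls. Hence every derivative $\partial^\beta v$ is a solution on a smaller ball, once we know it lies in $H^n_{\loc}$.

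Second, I prove a Caccioppoli inequality for higher-order Legendre--Hadamard systems. For a solution $v$ on $\B_{s}$ and $0<s'<s$,
\[
\Vert \nabla^n v\Vert_{L^2(\B_{s'})}\le C(s-s')^{-n}\Vert v\Vert_{L^2(\B_s)} .
\]
To get it, test with $w=\eta^{2n}v$, where $\eta$ is a cutoff. Gårding's inequality on $\R^d$ applies to $\eta^n v\in H^n_0$, since Legendre--Hadamard suffices there by Fourier transform and constant coefficients. It gives $\lambda_Q\int|\nabla^n(\eta^n v)|^2\le\int Q(\nabla^n(\eta^n v))$. The commutator terms involve lower derivatives $\nabla^j v$ for $j<n$ weighted by derivatives of $\eta$. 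These are absorbed using interpolation inequalities on the intermediate derivatives, $\Vert\nabla^j u\Vert\le\epsilon\Vert\nabla^n u\Vert+C_\epsilon\Vert u\Vert$, together with a standard iteration/hole-filling lemma over nested radii. Combining this with the difference-quotient argument shows $v\in H^{k}_{\loc}$ for every $k$. Iterating on nested balls between $\B_{1/2}$ and $\B_1$ then yields
\[
\Vert v\Vert_{H^k(\B_{3/4})}\le C_k\Vert v\Vert_{L^2(\B_1)} .
\]

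Third, choose $k>\frac d2+1$. Sobolev embedding $H^k(\B_{3/4})\hookrightarrow C^1(\overline{\B_{1/2}})$ then gives $\Vert\nabla v\Vert_{L^\infty(\B_{1/2})}\le C\Vert v\Vert_{L^2(\B_1)}$, with $C$ depending only on $d,n,p,Q$.

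The main obstacle is the Caccioppoli step. Only Legendre--Hadamard ellipticity is assumed, not strong (Legendre) ellipticity, so coercivity holds only for compactly supported functions via Gårding. The lower-order commutator terms must therefore be handled carefully through interpolation and an absorption lemma. Alternatively, one can simply cite the classical interior estimates for such systems (e.g.\ Giaquinta--Martinazzi, or Agmon--Douglis--Nirenberg), which give exactly these bounds.
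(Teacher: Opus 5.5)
Your proposal is correct and takes essentially the same route as the paper. The paper proves this lemma in one line by citing Barton's interior estimate for higher-order systems satisfying G\aa rding's inequality (Cor.~22 there, with $\dot{\mathbf{F}}=0$ and $\delta=0$), which packages exactly the argument you sketch: a Caccioppoli inequality obtained via G\aa rding applied to $\eta^n v$, a difference-quotient bootstrap for constant coefficients, and Sobolev embedding.
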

The existence and uniqueness of a local minimizer for any boundary data is a consequence of the Legendre--Hadamard condition.
\begin{proof}
This is a consequence of~\cite[Cor. 22]{Barton}, with \(\dot{\mathbf{F}}=0\) and \(\delta=0\).
\end{proof}
We deduce Corollary~\ref{cor_Q}.
\begin{proof}[Proof of Corollary~\ref{cor_Q}]
Without loss of generality we assume that the Legendre--Hadamard condition~\eqref{eq_legendrehadamard} holds with \(\lambda_Q=1\).
We let \(\phi=\nabla^n u\). For any ball \(\B_{z,r}\subset\B_1\), we define
\[\phi_{\B_{z,r}}:=\nabla^n u_{z,r}\]
where \(u_{z,r}\) is by definition the minimizer of
\[v\in u+H^n_0(\B_{z,r})\mapsto \int_{\B_{z,r}}Q(\nabla^nv).\]
We check that Theorem~\ref{th_main} applies to \(\phi\). For any \(\B_{z,r}\subset\B_1\), we have
\begin{align*}
\int_{\B_{z,r}}|\nabla^n (u-u_{z,r})|^2&\leq \int_{\B_{z,r}}Q(\nabla^n(u-u_{z,r}))&\text{ by Gårding ineq.}\\
&=\int_{\B_{z,r}}Q(\nabla^n u)-Q(\nabla^n u_{z,r})&\text{ by minimality of }u_{z,r}\\
&\leq |\B_{z,r}\cap\{\nabla^m u=0\}|&\text{ by minimality of }u\\
&\leq |\B_{z,r}\cap\{\nabla^n u=0\}|&\text{ since }\nabla^nu \underset{a.e.}{=}0\text{ in }\{\nabla^m u=0\}
\end{align*}
which gives the condition~\eqref{eq_hyp1}. By Lemma~\ref{lem_Cacciopoli} applied to each vectorial component of
\[\nabla^nu_{z,r}-\fint_{\B_{z,r}}\nabla^n u_{z,r},\]
we get
\[r\Vert \nabla^{n+1}u_{z,r}\Vert_{L^\infty(\B_{z,\frac{1}{2}r})}\leq K \left(\fint_{\B_{z,r}}\left|\nabla^n u_{z,r}-\fint_{\B_{z,r}}\nabla^n u_{z,r}\right|^2\right)^\frac{1}{2}\]
for some constant \(K>0\) that depends only on \(Q\). This is exactly the condition~\eqref{eq_hyp2}. Thus Theorem~\ref{th_main} applies to \(\nabla^n u\), and we obtain the result.
\end{proof}

We now prove the non-degeneracy: the proof is more classical and relies on a combination of Caccioppoli inequality and higher integrability given by Sobolev embedding, following \cite[Lem. 14]{LN25}.
\begin{proof}[Proof of Proposition~\ref{prop_nondegeneracy}]
We denote \(u_r=r^{-n}u(r\cdot)\), so that \(u_r\) is also a local minimizer of the same functional in \(\B_1\) for every \(r\in (0,1)\). We write \(a\lesssim b\) when \(a\leq Cb\) for some constant \(C>0\) depending only on \(d,p,n,Q\).

Since \(m=0\), applying the Caccioppoli inequality (obtained by comparing \(u_r\) and \(e^{t\eta^b}u_r\) for sufficiently large \(b\in\N \), \(t\to 0\) and \(\eta\in\mathcal{C}^\infty_c(\B_1,\R)\)), we obtain (see for instance~\cite[Cor. 23]{Barton}):
\[\Vert u_r\Vert_{H^n(\B_\frac{1}{2})}\lesssim \Vert u_r\Vert_{L^2(\B_1)}\]
Fix some \(\varphi\in\mathcal{C}^\infty_c(\B_1,\R)\) such that \(\varphi\equiv 1\) on \(\B_{\frac{1}{2}}\). Using \((1-\varphi)u_r\) as a competitor for \(u_r\) we obtain
\begin{align*}
|\B_{\frac{1}{2}}\cap\{u_r\neq 0\}|&\leq \int_{\B_1}\left(Q(\nabla^n[ (1-\varphi)u_r])-Q(\nabla^n u_r)\right)\lesssim \Vert u_r\Vert_{H^n(\B_1)}^2
\end{align*}
Combining these two estimates and the embedding \(H^n(\B_1)\hookrightarrow L^q(\B_1)\) with \(q=\min\left(4,\frac{2d}{d-2}\right)>2\), we obtain for any \(r\in \left(0,1\right)\):
\begin{align*}
\Vert u_{r/4}\Vert_{L^2(\B_1)}&\leq |\{u_{r/4}\neq 0\}\cap\B_1|^{\frac{1}{2}-\frac{1}{q}}\Vert u_{r/4}\Vert_{L^q(\B_1)}\lesssim |\{u_{r/4}\neq 0\}\cap\B_1|^{\frac{1}{2}-\frac{1}{q}}\Vert u_{r/4}\Vert_{H^n(\B_1)}\\
&\lesssim \Vert u_{r/2}\Vert_{H^n(\B_1)}^{2-\frac{2}{q}}\lesssim\Vert u_r\Vert_{L^2(\B_1)}^{2-\frac{2}{q}}
\end{align*}
Since \(2-\frac{2}{q}>1\), there exists \(c=c(d,p,n,Q)>0\) such that
\[\forall r\in (0,1],\ \Vert u_r\Vert_{L^2(\B_1)}\leq c\text{ implies }\Vert u_{r/4}\Vert_{L^2(\B_1)}\leq \frac{1}{2}\Vert u_{r}\Vert_{L^2(\B_1)}.\]
If \(\Vert u\Vert_{L^2(\B_1)}\leq c\), then by induction we obtain \(u(0)=0\), which is the result.
\end{proof}

\begin{remark}\label{rk_nondeg}
When \(m\geq 1\), one could ask if a similar result, of the form
\[\nabla^m u(0)\neq 0\ \Rightarrow\ \Vert u\Vert_{H^n(\B_1)}\geq c\]
holds. However no such non-degeneracy result holds; we refer to Proposition~\ref{prop_pipe} in the case \(d=2\), which corresponds to local minimizers of
\[u\in H^2(\B_1,\R)\mapsto \int_{\B_1}\left(|\nabla^2 u|^2+\chi_{\nabla u\neq 0}\right)\]
through the identification \(\nabla^\bot u=v\). The central issue is the loss of a ``Caccioppoli-type'' inequality that holds independently of the domain.
One may also prove through more elementary methods that the function
\[u_\eps(x)=\begin{cases}-\frac{\eps^2}{3} & \text{ if }x\leq -\eps\\ \frac{\eps x}{2}-\frac{x^3}{6\eps} & \text{ if } -\eps\leq x\leq \eps\\ +\frac{\eps^2}{3} & \text{ if } x\geq \eps\end{cases}\]
is a local minimizer of \(u\in H^2([-1,1],\R)\mapsto \int_{-1}^{1}\left(|u''|^2+\chi_{u'\neq 0}\right)\) for any \(\eps\in (0,1)\), which contradicts the non-degeneracy.
\end{remark}
\subsection{Constrained Bernoulli problems}
The proof follows the same outline as the proof of Corollary~\ref{cor_Q}.
\begin{lemma}\label{lem_Cacciopoli_Stokes}
There exists \(C>0\) such that the following holds. Let \(v\in H^1_{\mathrm{div}}(\B_1,\R^d)\) be a local minimizer of
\[w\in H^1_{\mathrm{div}}(\B_1,\R^d)\mapsto \int_{\B_1}|\nabla w|^2\]
Then
\[\Vert \nabla v\Vert_{L^{\infty}(\B_{\frac{1}{2}})}\leq C\Vert v\Vert_{L^2(\B_1)}.\]
\end{lemma}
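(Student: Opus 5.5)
A local minimizer \(v\) of the Dirichlet energy among divergence-free fields is a weak solution of the stationary Stokes system. The plan is to derive this system, obtain interior Sobolev regularity of arbitrary order, and conclude with the Sobolev embedding.

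\textbf{Euler--Lagrange equation.} Minimality against all \(w\in H^1_{0,\mathrm{div}}(\B_1,\R^d)\) gives
\[\int_{\B_1}\nabla v:\nabla w=0 .\]
By de Rham's theorem (equivalently, the Nečas inequality, i.e.\ surjectivity of \(\mathrm{div}:H^1_0\to L^2_0\)), there is a pressure \(\pi\in L^2_\loc(\B_1)\) such that
\[-\Delta v+\nabla \pi=0,\qquad \mathrm{div}\, v=0\qquad\text{in }\mathcal{D}'(\B_1).\]

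\textbf{Reduction to harmonic functions.} Taking the divergence of the first equation gives \(\Delta\pi=0\) in the distributional sense, so \(\pi\) is harmonic and smooth by Weyl's lemma. Taking the curl gives \(\Delta(\partial_iv_j-\partial_jv_i)=0\). Combined with \(\mathrm{div}\,v=0\), we also get \(\Delta^2 v=\Delta\nabla\pi=0\). So each component of \(v\) is biharmonic, hence smooth, and satisfies the interior estimates of biharmonic functions:
\[\Vert \nabla v\Vert_{L^\infty(\B_{1/2})}\le C\Vert v\Vert_{L^2(\B_1)}.\]
One can either cite these directly or apply Lemma~\ref{lem_Cacciopoli} with \(n=2\), \(p=d\) and \(Q(\nabla^2 w)=|\Delta w|^2\). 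This \(Q\) satisfies the Legendre--Hadamard condition, since
\[Q(\xi^{\otimes 2}\otimes a)=|\xi|^4|a|^2 .\]
The biharmonic equation \(\Delta^2 v=0\) is exactly its Euler--Lagrange equation, and local minimality follows from convexity once the equation holds weakly with \(v\in H^2_\loc\).

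\textbf{Main obstacle.} The delicate point is that \(v\) is only \(H^1\) while Lemma~\ref{lem_Cacciopoli} works in \(H^n\). I would handle this by mollification. Convolving with a mollifier preserves the distributional equation \(\Delta^2 v_\eps=0\) on smaller balls, and \(v_\eps\) is smooth, so the estimate applies to \(v_\eps\). Since the constant does not depend on \(\eps\), the bound passes to the limit, after rescaling from \(\B_{3/4}\) to \(\B_1\). Alternatively, and more directly, the classical Caccioppoli estimate for Stokes gives
\[\Vert\nabla v\Vert_{L^2(\B_{\rho})}+\Vert \pi-\bar\pi\Vert_{L^2(\B_\rho)}\lesssim \Vert v\Vert_{L^2(\B_{\rho'})}\qquad(\rho<\rho').\]
Here the pressure is controlled through the Bogovskii operator. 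Since derivatives of \(v\) solve the same system, iterating on derivatives yields \(v\in H^k(\B_{1/2})\) for every \(k\). Sobolev embedding with \(k>d/2+1\) then gives the \(L^\infty\) bound on \(\nabla v\), with a constant depending only on \(d\).
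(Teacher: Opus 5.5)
Your argument is correct, but it takes a different route from the paper.

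\textbf{The paper's route.} The paper never writes down the pressure or reduces the system to a scalar equation. It quotes Kratz's Poisson-type representation $v(z)=\int_{\partial\B_1}S(z,\zeta)v(\zeta)\,d\zeta$ for Stokes solutions on the ball. It then averages this formula over the spheres $\partial\B_r$, $r\in(\frac34,1)$, against a radial bump $\eta$, which gives $v(z)=\int_{\B_1}S_\eta(z,\zeta)v(\zeta)\,d\zeta$ for $z\in\B_{1/2}$. The bound follows by differentiating under the integral, since $\nabla_zS_\eta$ is bounded on $\B_{1/2}\times\B_1$.

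\textbf{Your route.} You eliminate the pressure: $\Delta\pi=0$, hence $\Delta^2v=0$. This reduces the problem to componentwise interior estimates for biharmonic functions, which you take from Lemma~\ref{lem_Cacciopoli} with $Q(\nabla^2w)=|\Delta w|^2$.

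\textbf{Comparison.} Your approach reuses the previous lemma and Barton's estimates, so the argument stays within the paper's own toolkit. The cost is an appeal to de Rham's theorem. The paper's kernel representation hides the pressure entirely and gives the estimate in one line, but it relies on a more specialized representation formula.

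\textbf{Two small remarks.} First, the ``main obstacle'' is not really one. $\Delta^2$ is hypoelliptic, so $v$ is already smooth in $\B_1$ and lies in $H^2$ of every ball compactly contained in $\B_1$; the mollification step is unnecessary. Second, to land exactly on $\B_{1/2}$, apply the translated and rescaled estimate on the balls $\B_{y,1/2}\Subset\B_1$ for $y\in\B_{1/2}$. A single rescaled ball centered at the origin only controls a smaller ball.
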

\begin{proof}
Assume \(v:\B_1\to\R^d\) satisfies Stokes's equation, then by~\cite[Th. 1]{Kratz} we have for all \(z\in\B_1\):
\[v(z)=\int_{\partial\B_1}S(z,\zeta)v(\zeta)d\zeta\]
where \(S\in\mathcal{C}^\infty(\B_1\times\partial\B_1,\R^{d\times d })\) is defined in~\cite[eq. (5)]{Kratz}. Let \(\eta\in\mathcal{C}^\infty_c(\B_1\setminus\B_{\frac{3}{4}})\) be a radial function such that \(\int_{0}^{1}\eta(r)dr=1\), then denoting
\[S_\eta(z,\zeta):=\frac{\eta(\zeta)}{|\zeta|^{d-1}}S\left(\frac{z}{|\zeta|},\frac{\zeta}{|\zeta|}\right),\]
we have, since \(v(r\cdot)\) satisfies Stokes' equation for every \(r\in \left(\frac{1}{2},1\right)\),
\[\forall z\in\B_{\frac{1}{2}},\ v(z)=\int_{B_1} S_\eta(z,\zeta)v(\zeta)d\zeta.\]
As a consequence we get
\[\forall z\in\B_{\frac{1}{2}},\ \nabla v(z)=\int_{B_1}\nabla_z S_\eta(z,\zeta)v(\zeta)d\zeta.\]
Since \(\nabla_z S_\eta\) is bounded on \(\B_{\frac{1}{2}}\times\B_1\), we obtain the result.
\end{proof}

\begin{proof}[Proof of Corollary~\ref{cor_contrainte}]
The proof is identical to the proof of Corollary~\ref{cor_Q}: we let \(\phi=\nabla u\) and \(\phi_{\B_{z,r}}=\nabla u_{z,r}\) where \(u_{z,r}\) is the solution of Stokes's equation in \(\B_{z,r}\) with boundary data \(u\), and condition~\eqref{eq_hyp1} is obtained by the minimality of \(u\) and \(u_{z,r}\), while the condition~\eqref{eq_hyp2} is obtained as a consequence of Lemma~\ref{lem_Cacciopoli_Stokes} applied to each component of \(\nabla u_{z,r}-\fint_{\B_{z,r}}\nabla u_{z,r}\).
\end{proof}

We may give another application on the minimization of \(\mathcal{D}(K)\) defined in equation~\eqref{eq_drag}, among all closed sets of given volume in \(\R^d\) (\(d\geq 3\)), and \(u_\infty\in\R^d\setminus\{0\}\). Indeed this optimization problem admits the natural relaxation
\begin{equation}\label{eq_relaxed_drag}
\inf\left\{\left|\{u=0\}\right|^\frac{2-d}{d}\int_{\R^d}|\nabla u|^2,\ u\in u_\infty+\dot{H}^1_{\mathrm{div}}(\R^d,\R^d)\right\}.
\end{equation}
\begin{proposition}\label{prop_drag}
Let \(u\) be a minimizer of~\eqref{eq_relaxed_drag}, then \(\nabla u\in L^{\infty}(\R^d,\R^d)\).
\end{proposition}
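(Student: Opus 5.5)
The plan is to reduce Proposition~\ref{prop_drag} to a local minimality statement for a Bernoulli--Stokes functional with a Lagrange multiplier, and then apply Corollary~\ref{cor_contrainte} at every scale via translation and rescaling.

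\textbf{Step 1: local almost-minimality with multiplier.} Let \(u\) minimize~\eqref{eq_relaxed_drag}. Set \(V=|\{u=0\}|\) and \(E=\int|\nabla u|^2\). We may assume \(0<V<\infty\); otherwise the problem degenerates and \(u\equiv u_\infty\) is trivially Lipschitz. Take a competitor \(v=u+w\) with \(w\in H^1_{0,\mathrm{div}}(\B_{z,r},\R^d)\). Minimality gives
\[
|\{v=0\}|^{\frac{2-d}{d}}\int|\nabla v|^2\geq V^{\frac{2-d}{d}}E .
\]
The volume changes by at most \(|\B_{z,r}|\), so a first-order expansion of \(t\mapsto t^{(2-d)/d}\) yields
\[
\int_{\B_{z,r}}|\nabla u|^2+\Lambda\,\chi_{u\neq 0}\leq\int_{\B_{z,r}}|\nabla v|^2+\Lambda\,\chi_{v\neq0}+C r^{2d},
\qquad \Lambda=\tfrac{d-2}{d}\,\tfrac{E}{V}.
\]
This holds for \(r\leq r_0\), with \(r_0\) and \(C\) depending on \(V,E\).

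\textbf{Step 2: verify the hypotheses of Theorem~\ref{th_main}.} The quadratic error term is harmless. Indeed, with \(\phi=\Lambda^{-1/2}\nabla u\) and \(\phi_{\B_{z,r}}\) the rescaled gradient of the Stokes solution with boundary data \(u\), the comparison argument in the proof of Corollary~\ref{cor_contrainte} gives
\[
\int_{\B_{z,r}}|\phi-\phi_{\B_{z,r}}|^2\leq|\B_{z,r}\cap\{\phi=0\}|+Cr^{2d}.
\]
The extra \(r^{2d}\) is negligible against \(r^d\). To absorb it, I would work on balls of radius \(\le r_0\), rescaled to unit size. After rescaling \(\zeta\mapsto \phi(z+r_0\zeta)\), the error becomes \(C r_0^{d}\,\rho^{2d}\) at relative scale \(\rho\). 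I then check that Lemma~\ref{lem_dev_decrease} and Lemma~\ref{lem_supp_decrease} tolerate an additive error \(\lesssim|\B_\rho|\): this only changes \(C_1\) and \(M_1\). Alternatively, one can replace the term by \(|\B_{z,r}\cap\{\phi=0\}|+|\B_{z,r}|\cdot(\text{small})\) and run the same proofs with the constant shifted. Condition~\eqref{eq_hyp2} follows verbatim from Lemma~\ref{lem_Cacciopoli_Stokes}.

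\textbf{Step 3: uniform bound on all of \(\R^d\).} Apply the conclusion on balls \(\B_{x,r_0}\) for every \(x\in\R^d\). Where the ball meets \(\mathcal{F}_u\), we obtain
\[
|\nabla u|\leq C\bigl(1+\|\nabla u\|_{L^2(\R^d)}\bigr).
\]
If \(\B_{x,r_0}\) does not meet \(\mathcal F_u\), then it lies a.e.\ in \(\{u\neq0\}\), or in \(\{u=0\}\) where \(\nabla u=0\). In the first case \(u\) solves Stokes there, and Lemma~\ref{lem_Cacciopoli_Stokes} applied to \(u-\fint u\), together with Poincaré, gives
\[
|\nabla u(x)|\lesssim r_0^{-d/2}\|\nabla u\|_{L^2}.
\]
Both bounds are uniform in \(x\), so \(\nabla u\in L^\infty\).

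\textbf{Main obstacle.} The delicate step is Step~1. The relaxed functional is a product, so a competitor may change the volume of \(\{u=0\}\) at first order, and the expansion must be done carefully. The divergence-free constraint is preserved since \(w\in H^1_{0,\mathrm{div}}\). One must also handle competitors for which \(|\{v=0\}|\) becomes small. This is controlled because \(r\le r_0\) keeps \(|\{v=0\}|\geq V/2\).
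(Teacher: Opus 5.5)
Your Steps 1 and 3 are sound. The second-order expansion of $t\mapsto t^{(2-d)/d}$ does give almost-minimality with a remainder $Cr^{2d}$. The covering in Step 3 is what the paper leaves implicit.

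The weak point is Step 2. Theorem~\ref{th_main} is \emph{not} robust to an additive error of size $\eta|\B_\rho|$ with $\eta$ fixed, however small. For example, in $d=2$ let $\phi=c+\epsilon\log|x|$, with $\phi_{\B_{z,r}}=\phi$ if $|z|\geq 2r$ and $\phi_{\B_{z,r}}\equiv c+\epsilon\log(3r)$ otherwise. Then \eqref{eq_hyp2} holds and $\int_{\B_{z,r}}|\phi-\phi_{\B_{z,r}}|^2\lesssim\epsilon^2|\B_{z,r}|$ on every ball, yet $\phi$ is unbounded. Adding a small zero patch away from the origin makes $\mathcal F_\phi\neq\emptyset$.

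In the proof of Theorem~\ref{th_main} this failure appears in two places:
\begin{itemize}
\item In Lemma~\ref{lem_supp_decrease_iterate}, each step moves the average by up to $\tau^{-\frac d2}D(\phi,\tau^\ell)$. With a non-decaying error the bounds on these deviations are no longer summable, so the induction cannot be run for all $k$.
\item In the final step, $\phi=\phi_{\B_{z,\frac r2}}$ is read off from \eqref{eq_hyp1} with vanishing right-hand side, which no longer holds once there is a remainder.
\end{itemize}
So your alternative with a fixed small relative error cannot work, and checking Lemmas~\ref{lem_dev_decrease} and~\ref{lem_supp_decrease} alone is not enough. Your main route is repairable. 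Your remainder is really $Cr_0^d\rho^{2d}$, of relative size $(r_0\rho)^d$, which is summable along $\rho=\tau^\ell$ and can be carried through the iteration. The identity $u=u_B$ on balls away from $\mathcal F_u$ can be taken directly from minimality. But all of this amounts to re-proving the theorem.

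The paper shows that no remainder is needed. Compare $u$ only with its Stokes replacement $v$, equal to $u_B$ in $B$ and to $u$ outside. By orthogonality, $\int|\nabla v|^2=E-\int_B|\nabla(u-u_B)|^2$, and $|\{v=0\}|\geq V-|B\cap\{u=0\}|$. Set $s=|B\cap\{u=0\}|/V$. Since $t\mapsto t^{(2-d)/d}$ is decreasing, minimality gives
$E\leq(1-s)^{\frac{2-d}{d}}\bigl(E-\int_B|\nabla(u-u_B)|^2\bigr)$.
The chord bound $(1-s)^{\frac{d-2}{d}}\geq 1-s$ on $[0,1]$ then yields $\int_B|\nabla(u-u_B)|^2\leq\frac{E}{V}|B\cap\{u=0\}|$ exactly, for every ball with $|B|<V$.

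Your tangent-line expansion, with the sharp multiplier $\Lambda=\frac{d-2}{d}\frac EV$, lies above the concave function, and that is what forces the $r^{2d}$ term. Theorem~\ref{th_main} does not care about the value of the constant, so the chord with the worse constant $E/V$ suffices. The theorem then applies verbatim to $(E/V)^{-\frac12}\nabla u$ on balls of a fixed small radius, and your Step 3 concludes. The same comparison also gives $u=u_B$ on balls missing $\mathcal F_u$, which is what your Step 3 uses.
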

In particular, this implies that the set \(K=\{u=0\}\) is a closed set that achieves the minimal value of \(|K|^{\frac{2-d}{d}}\mathcal{D}(K)\). Whether $K$ is bounded is unclear.
\begin{proof}
Without loss of generality, assume \(|\{u=0\}|=1\) and write \(\mathcal{D}=\int_{\R^d}|\nabla u|^2\). Let \(\ov{r}>0\) be such that \(|\B_{\ov{r}}|=1\), then for any ball \(B\subset\R^d\) of radius \(r\in (0,\ov{r})\), if we denote \(u_{B}\) the solution of Stokes' equation in \(B\) with boundary data \(u|_{\partial B}\), then by minimality of \(u\) and \(u_B\), we have \(u=u_{B}\) if \(B\cap\mathcal{F}_u=\emptyset\), and otherwise
\begin{align*}
\mathcal{D}&\leq \left(1-|B\cap\{u=0\}|\right)^\frac{2-d}{d}\left(\mathcal{D}-\int_{B}|\nabla (u-u_B)|^2\right).
\end{align*}
This implies
\begin{align*}
\mathcal{D}-\int_{B}|\nabla (u-u_B)|^2\geq \mathcal{D}\left(1-|B\cap\{u=0\}|\right)^\frac{d-2}{d}\geq \mathcal{D}\left(1-|B\cap\{u=0\}|\right).
\end{align*}
This simplifies into \(\int_{B}|\nabla (u-u_{B})|^2\leq \mathcal{D}|B\cap\{u=0\}|\). As previously, by application of Theorem~\ref{th_main} to the family of functions \((\phi,(\phi_B)_B)=\left(\mathcal{D}^{-\frac{1}{2}}\nabla u,\left(\mathcal{D}^{-\frac{1}{2}}\nabla u_{B}\right)_B\right)\), we obtain \(\nabla u\in L^{\infty}(\R^d,\R^d)\).
\end{proof}

No non-degeneracy result similar to Proposition~\ref{prop_nondegeneracy} holds for this problem, the central issue in the proof being the absence of a Caccioppoli inequality
\[\int_{\B_{\frac{1}{2}}}|\nabla u|^2\lesssim \int_{\B_1}|u|^2\]
for a function \(u\in H^1(\B_1,\R^d)\) that satisfies Stokes' equation on its support. A counterexample to this Caccioppoli inequality is given by
\[u_\eps(x,y)=\left(0,\frac{\eps^2-x^2}{2\eps}\chi_{|x|<\eps}\right)\]
as \(\eps\to 0\), associated to the pressure \(p_\eps(x,y)=-\frac{y}{\eps}\chi_{|x|<\eps}\). This corresponds to a ``pipe'' with a standard parabolic velocity profile. The same example gives in fact a counterexample to the non-degeneracy, in any dimension. We follow closely the argument of~\cite[Th. 4.4]{Schulz13}, which treats the three-dimensional, volume-constrained case.

Below we denote by $\B_r^{d-1}$ the centered open ball of radius $r$ in $\R^{d-1}$.
\begin{proposition}\label{prop_pipe}
Let $d\geq 2$, $\Om=\B_{1}^{d-1}\times (-1,1)$, $\eps\in (0,1)$ and
\[v_\eps(x,y)=\left(0,0,\hdots,0,\frac{\eps^2-\Vert x\Vert^2}{2\eps}\chi_{\Vert x\Vert<\eps}\right)\]
Then $v_\eps$ is a local minimizer of
\[v\in H^1_{\mathrm{div}}(\Om,\R^d)\mapsto \int_{\Om}\left(|\nabla v|^2+\chi_{v\neq 0}\right).\]
\end{proposition}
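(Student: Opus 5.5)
The plan is to avoid any calibration in the full domain and instead reduce the problem to a family of $(d-1)$-dimensional problems on the cross-sections $\B_1^{d-1}\times\{y\}$, using the divergence constraint through \emph{flux conservation}. Write $v_\eps=(0,\dots,0,f_\eps(x))$ with $f_\eps(x)=\frac{\eps^2-\Vert x\Vert^2}{2\eps}\chi_{\Vert x\Vert<\eps}$. Let $w\in v_\eps+H^1_{0,\mathrm{div}}(\Om,\R^d)$ be a competitor; it is enough to treat $w$ agreeing with $v_\eps$ near $\partial\Om$.

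\textbf{Step 1: flux conservation.} Since $\mathrm{div}\,w=0$ and $w=v_\eps$ near the lateral boundary $\partial\B_1^{d-1}\times(-1,1)$, integrating the divergence over $\B_1^{d-1}\times(y_1,y_2)$ shows that the flux
\[
F(y):=\int_{\B_1^{d-1}}w_d(x,y)\,dx
\]
is independent of $y$. Since $w=v_\eps$ near $y=\pm1$, it equals $F_\eps:=\int_{\B_1^{d-1}} f_\eps = c_d\,\eps^{d}$.

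\textbf{Step 2: slicing.} By Fubini and dropping all terms except $|\nabla_x w_d|^2$,
\[
\int_\Om|\nabla w|^2+\chi_{w\ne0}\ \ge\ \int_{-1}^1\Big(\int_{\B_1^{d-1}}|\nabla_x w_d(\cdot,y)|^2+\chi_{w_d(\cdot,y)\neq0}\Big)dy .
\]
For $v_\eps$ this is an equality, since $\nabla v_\eps$ only contains $\nabla_x f_\eps$. So it suffices to prove that $f_\eps$ minimizes
\[
J(g)=\int_{\B_1^{d-1}}|\nabla g|^2+|\{g\ne0\}|
\]
among $g\in H^1_0(\B_1^{d-1})$ with $\int g=F_\eps$. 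For a.e.\ $y$ the slice $w_d(\cdot,y)$ is such a $g$.

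\textbf{Step 3: the cross-sectional problem.} First reduce to $g\ge0$. Replacing $g$ by $\lambda|g|$ with $\lambda=F_\eps/\int|g|\le1$ does not increase $J$. Then apply Schwarz symmetrization (Pólya--Szegő), which keeps $\int g$ and $|\{g\ne0\}|$ and does not increase the Dirichlet energy. This reduces us to $g$ supported in a ball $\B_R^{d-1}$, $R\le1$, with fixed integral. On a fixed support ball, the minimizer of $\int|\nabla g|^2$ under $\int g=F$ is the multiple $\frac{F}{T(R)}\,t_R$ of the torsion function $t_R$ (solving $-\Delta t_R=1$), with energy $F^2/T(R)$, where $T(R)=\int t_R=a_dR^{d+1}$. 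Hence
\[
J(g)\ \ge\ h(R):=\frac{F_\eps^2}{a_dR^{d+1}}+\omega_{d-1}R^{d-1}.
\]
The function $h$ has a single critical point on $(0,\infty)$, which is a global minimum. One checks that it is $R=\eps$: the critical-point equation is exactly the Bernoulli condition $|\nabla f_\eps|=1$ on $\{\Vert x\Vert=\eps\}$, and $f_\eps$ is precisely the torsion profile on $\B^{d-1}_\eps$ with the correct flux. Since $\eps<1$ the constraint $R\le1$ is harmless, so $J(g)\ge h(\eps)=J(f_\eps)$.

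\textbf{Main obstacle.} The step I expect to require the most care is Step 3, specifically the reduction to nonnegative, symmetric competitors. One must check that taking $|g|$ and rescaling by $\lambda\le 1$ really lowers $J$; this holds because both terms are monotone under the scaling. One must also check that Schwarz symmetrization of an $H^1_0$ function with possibly non-open positivity set is legitimate. The explicit verification that $R=\eps$ is the critical point of $h$ is a direct computation with $t_R=\frac{R^2-\Vert x\Vert^2}{2(d-1)}$. Combining Steps 1–3 and integrating in $y$ gives $E(w)\ge E(v_\eps)$.
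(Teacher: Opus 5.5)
Your argument is correct and is essentially the paper's proof: flux conservation on the slices $\B_1^{d-1}\times\{y\}$, Schwarz rearrangement giving $\int_{\B_1^{d-1}}|\nabla_x w_d(\cdot,y)|^2\geq F_\eps^2/T(R(y))$, and optimization in $R$. The only cosmetic difference is that you minimize $h$ pointwise in $y$, whereas the paper integrates in $y$ first and then applies Young and Hölder. You should, however, drop the reduction to competitors equal to $v_\eps$ near $\partial\Om$: it is not obviously legitimate, since $|\{w\neq 0\}|$ is not continuous under $H^1$ approximation, and it is also unnecessary, because testing $\mathrm{div}\,w=0$ against functions of $y$ alone and using $w-v_\eps\in H^1_0$ (i.e.\ the traces on $\partial\Om$) already gives the constant flux $F_\eps$ for a.e.\ $y$ and every admissible $w$.
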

In particular $\Vert v_\eps\Vert_{H^1(\Om)}+|\{v_\eps\neq 0\}|$ converges to $0$ as $\eps\to 0$, yet $v_\eps$ does  not vanish in a neighborhood of the origin.
\begin{proof}
Consider $v\in v_\eps+H^1_{0,\mathrm{div}}(\Om)$, we write $v=(v^x,v^y)$ where $v^x\in H^1(\Om,\R^{d-1})$ and $v^{y}\in H^1(\Om,\R)$. For any $y\in [-1,1]$, write
\[\Om^y=\B_1^{d-1}\times \{y\}.\]
Moreover, we write $c_d$ the measure of the  $(d-1)$-dimensional unit ball and $R(y)>0$ such that
\[\mathcal{H}^{d-1}(\Om^y\cap\{v\neq 0\})=c_d R(y)^{d-1}.\]
Since  $\mathrm{div}(v)=0$ and $\int_{\Om^{1}}v^y=\frac{c_d }{d+1}\eps^{d}$, then for all $y\in (-1,1)$ we have
\[\int_{\Om^y}v^y=\frac{c_d }{d+1}\eps^{d},\]
By a radial rearrangement argument (see \cite[Lem. 4.6]{Schulz13} for the two-dimensional case), we have for almost every $y\in (-1,1)$:
\begin{equation}\label{eq_talenti}
\int_{\Om^y}|\nabla_x v^y|^2\geq c_d\frac{d-1}{d+1}\frac{\eps^{2d}}{R(y)^{d+1}}.
\end{equation}
Indeed, denoting $v^y_*(\cdot,y)$ the radially decreasing Schwarz rearrangement of $|v^y|(\cdot,y)$, we have
\[\int_{\Om^y}|\nabla_x v^y_*|^2\leq \int_{\Om^y}|\nabla_x v^y|^2,\ \int_{\Om^y}v^y_*\geq \int_{\Om^y}v^y=\frac{c_d }{d+1}\eps^{d}\]
and the minimal value of $\int_{\B_{R(y)}^{d-1}}|\nabla_x w|^2$ under a constraint on $\int_{\B_{R(y)}^{d-1}}w$, for $w\in H^1_0(\B_{R(y)}^{d-1})$, is given by a multiple of $(R(y)^2-\Vert x\Vert^2)_+$, which gives the estimate \eqref{eq_talenti}.
Then
\begin{align*}
\int_{\Om}\left(|\nabla v|^2+\chi_{v\neq 0}\right)&\geq c_d\int_{-1}^{1}\left(\frac{d-1}{d+1}\frac{\eps^{2d}}{R(y)^{d+1}}+ R(y)^{d-1}\right)dy\\
&\geq \frac{2dc_d}{d+1}\left(\int_{-1}^{1}\frac{\eps^{2d}}{R(y)^{d+1}}dy\right)^\frac{d-1}{2d}\left(\int_{-1}^{1}R(y)^{d-1}dy\right)^\frac{d+1}{2d}&\text{ by Young ineq.}\\
&\geq\frac{2dc_d}{d+1}\int_{-1}^{1}\left(\frac{\eps^{2d}}{R(y)^{d+1}}\right)^\frac{d-1}{2d}\left(R(y)^{d-1}\right)^\frac{d+1}{2d}dy&\text{ by Hölder ineq.}\\
&=\frac{4dc_d}{d+1}\eps^{d-1}\\
&=\int_{\Om}\left(|\nabla v_\eps|^2+\chi_{v_\eps\neq 0}\right).
\end{align*}
\end{proof}

\section{Lipschitz regularity of stationary solutions}\label{sec_proof_main_crit}
In  this section  we give the proof of Corollary~\ref{cor_lipschitz_crit}  and of Theorem~\ref{th_crit_vectorial_intro} on non-minimizing solutions of the Bernoulli problem. We will assume \(d=2\) throughout the section.

The starting point is to reformulate the stationarity condition in terms of the Hopf differential. Recall that for a function \(u\in H^{1}_{\mathrm{loc}}(\R^2,\mathbb{R}^p)\), the Hopf differential of \(u\) is defined as
\[
 \Ho{u}=\partial_{z}u\cdot \partial_{z}u=\sum_{j=1}^p (\partial_z u_j)^2=\frac{1}{4}\bigl(|\partial_x u|^2-|\partial_y u|^2-2 \iu \partial_x u \cdot\partial_y u \bigr).
\]
Here \(\iu\) is the imaginary unit, we are identifying \(\mathbb{R}^2\) with \(\mathbb{C}\) and  we have set \(\partial_z =\frac{1}{2}\left(\partial_x-\iu \partial_y\right)\) and \(\partial_{\ov{z}} =\frac{1}{2}\left(\partial_x+\iu \partial_y\right)\).

\subsection{\(L^\infty\) bound on the Hopf differential and proof of Corollary~\ref{cor_lipschitz_crit}}
\begin{lemma}\label{lem_equ}
 For any open set \(D\subset\R^2\), \(u\in H^1(D,\mathbb{R}^p)\) is a stationary solution of the Bernoulli problem if and only if the equation
 \[
  \partial_{\ov{z}}\Ho{u}+\frac{1}{4}\partial_{z}\chi_{u=0}=0
 \]
holds in \(D\), in the weak sense.
\end{lemma}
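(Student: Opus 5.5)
The plan is to write the stationarity identity~\eqref{eq:critical_vectorial} in complex notation and match it, term by term, with the weak form of the claimed equation. For a test field \(\xi\in\mathcal{C}^\infty_c(D,\R^2)\) I will use the complex field \(X=\xi_1+\iu\xi_2\). Then
\[
\mathrm{div}\,\xi=2\,\mathrm{Re}(\partial_z X),\qquad \partial_{\ov z}X=\tfrac12\bigl(\partial_x\xi_1-\partial_y\xi_2\bigr)+\tfrac{\iu}{2}\bigl(\partial_y\xi_1+\partial_x\xi_2\bigr).
\]
So \(\partial_{\ov z}X\) encodes the trace-free symmetric part of \(D\xi\), while \(\partial_z X\) encodes the trace and the antisymmetric part.

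First I will expand the quadratic term. For each component \(u_j\) I will split
\[
|\nabla u_j|^2\,\mathrm{div}\,\xi-2\,\nabla u_j\cdot D\xi\,\nabla u_j
\]
according to the symmetric part \(S=\frac12(D\xi+D\xi^T)\). The antisymmetric part does not contribute to \(\nabla u_j\cdot D\xi\,\nabla u_j\). Writing \(S=\frac12\mathrm{tr}(S)I+S_0\), the trace parts cancel, since \(|\nabla u_j|^2\mathrm{div}\,\xi-\mathrm{div}\,\xi\,|\nabla u_j|^2=0\). What remains is \(-2\,\nabla u_j\cdot S_0\nabla u_j\). For a trace-free symmetric matrix \(S_0=\begin{pmatrix}a&b\\ b&-a\end{pmatrix}\) with \(a+\iu b=\partial_{\ov z}X\), this equals
\[
-2\bigl(a(|\partial_xu_j|^2-|\partial_yu_j|^2)+2b\,\partial_xu_j\partial_yu_j\bigr)=-8\,\mathrm{Re}\bigl((\partial_z u_j)^2\,\partial_{\ov z}X\bigr),
\]
using \((\partial_zu_j)^2=\frac14\bigl(|\partial_xu_j|^2-|\partial_yu_j|^2-2\iu\,\partial_xu_j\partial_yu_j\bigr)\). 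Summing over \(j\), the left-hand side of~\eqref{eq:critical_vectorial} becomes \(-8\,\mathrm{Re}\int_D\Ho{u}\,\partial_{\ov z}X\). The right-hand side is \(2\,\mathrm{Re}\int_D\chi_{u=0}\,\partial_zX\).

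So stationarity is equivalent to
\[
\mathrm{Re}\int_D\Bigl(\Ho{u}\,\partial_{\ov z}X+\tfrac14\chi_{u=0}\,\partial_z X\Bigr)=0\quad\text{for all } X\in\mathcal{C}^\infty_c(D,\C).
\]
Replacing \(X\) by \(\iu X\) gives the imaginary part as well, so the full complex integral vanishes for all complex test functions \(X\). This is precisely the distributional statement \(\partial_{\ov z}\Ho{u}+\frac14\partial_z\chi_{u=0}=0\), since pairing a distribution with a test function moves \(\partial_{\ov z}\) and \(\partial_z\) onto \(X\) with a sign that is common to both terms.

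Every step is an equivalence, so the converse direction is immediate. The only point requiring care is the sign and constant bookkeeping in the identity \(\nabla u\cdot S_0\nabla u=4\,\mathrm{Re}(\Ho{u}\,\partial_{\ov z}X)\), which I will verify directly on the components \(a\) and \(b\). No regularity beyond \(u\in H^1\) is needed, because \(\Ho{u}\in L^1\) and \(\chi_{u=0}\in L^\infty\).
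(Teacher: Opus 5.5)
Your proof is correct and is essentially the paper's computation. The paper tests the stationarity identity with \(\xi=(\varphi,0)\) and \(\xi=(0,\varphi)\), obtains two real divergence-form equations, and takes the first minus \(\iu\) times the second; you handle both components at once through the complex field \(X=\xi_1+\iu\xi_2\), the identity \(-2\nabla u_j\cdot S_0\nabla u_j=-8\,\mathrm{Re}\bigl((\partial_zu_j)^2\partial_{\ov z}X\bigr)\) for the trace-free part \(S_0\) of \(D\xi\), and the substitution \(X\mapsto\iu X\), with the signs, the factor \(\frac14\) and both directions of the equivalence checking out.
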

% We remind that with only this condition, even in the support of \(u\), \(u\) may not be a harmonic function. However in this case we have \(\partial_{z}(\partial_{\ov{z}}u)^2=0\), so \((\partial_{\ov{z}}u)^2\) is harmonic, meaning it still verifies good regularity estimates.
\begin{proof}
The stationarity condition is equivalent to
\[\forall\xi\in\mathcal{C}^\infty_c(D,\R^2),\ \int_{D}\left(|\nabla u|^2\mathrm{div} \xi -2 \sum_{j=1}^p D\xi\nabla u_j\cdot\nabla u_j-\chi_{u=0}\mathrm{div} \xi\right)=0\]
Taking \(\xi\) with only \(x\) component, and \(\xi\) with only \(y\) component, we obtain the two conditions \begin{align*}
\partial_x\left(-|\partial_x u|^2+|\partial_y u|^2-\chi_{u=0}\right)-2\partial_y\left(\partial_x u\cdot  \partial_y u\right)&=0\\
\partial_y\left(+|\partial_x u|^2-|\partial_y u|^2-\chi_{u=0}\right)-2\partial_x\left(\partial_x u\cdot \partial_y u\right)&=0
\end{align*}
Multiplying the second equation by \(\iu\) and  subtracting them gives the desired conclusion.
% We identify \[(2\partial_{\ov{z}}u)^2=\left(\partial_x u+i\partial_y u\right)^2= (\partial_x u)^2-(\partial_y u)^2+2i\partial_x u\partial_y u\]
% and so
% \begin{align*}
% \partial_z(2\partial_{\ov{z}} u)^2&=\frac{1}{2}\partial_x\left( (\partial_x u)^2-(\partial_y u)^2\right)+\partial_y\left(\partial_x u \partial_y u\right)\\
% &-i\frac{1}{2}\partial_y\left((\partial_x u)^2-(\partial_y u)^2\right)+i\partial_x\left(\partial_x u \partial_y u\right)\\
% &=-\partial_{\ov{z}} \chi_{u=0}
% \end{align*}
\end{proof}

We now  recall the definition of the Ahlfors--Beurling transform:

\begin{definition}\label{def_B}
Let \(\mathcal{B}:L^2(\R^2)\to L^2(\R^2)\) be  defined by
\[\widehat{\mathcal{B}[f]}(\xi,\zeta)=\frac{\xi-\iu \zeta}{\xi+\iu \zeta}\widehat{f}(\xi,\zeta)\]
where \(\widehat{f}\) is the Fourier transform of \(f\).
\end{definition}

For any measurable function \(h:\Om\to \C\) for \(\Om\subset\R^2\), we set
\[
 \Vert h\Vert_{L^{1,\infty}(\Om
   )}=\sup_{t>0}\Big(t\left|\Om\cap\{|h|>t\}\right|\Big).
\]
The following  are classical mapping properties of \(\mathcal{B}\),~\cite{AIM09}.

\begin{lemma}\label{lem_B}
\(\Vert \mathcal{B}\Vert_{L^2(\R^2)\to L^2(\R^2)}=1\) and \(\Vert \mathcal{B}\Vert_{L^{1}(\R^2)\to L^{1,\infty}(\R^2)}\) is finite. Moreover, for any \(f\in L^2(\R^2)\), we have \(\partial_{\ov{z}}\mathcal{B}[f]=\partial_{z}f\).
\end{lemma}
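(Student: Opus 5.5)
The three assertions of Lemma~\ref{lem_B} are classical. I will verify them directly from Definition~\ref{def_B}, using Plancherel for the first and third, and Calderón--Zygmund theory for the second.

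\emph{$L^2$ norm.} The multiplier
\[m(\xi,\zeta)=\frac{\xi-\iu\zeta}{\xi+\iu\zeta}\]
is defined for $(\xi,\zeta)\neq 0$, so almost everywhere, and has modulus one there. By Plancherel, $\Vert\mathcal{B}f\Vert_{L^2}=\Vert f\Vert_{L^2}$ for every $f\in L^2(\R^2)$. Hence $\mathcal{B}$ is an isometry, in fact unitary with inverse given by $\bar m$, and its operator norm equals $1$.

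\emph{Intertwining identity.} With the convention $\widehat{\partial_x f}=\iu\xi\widehat f$ and $\widehat{\partial_y f}=\iu\zeta\widehat f$, the symbols of the complex derivatives are
\[\widehat{\partial_{\ov z}f}=\tfrac{\iu}{2}(\xi+\iu\zeta)\widehat f,\qquad \widehat{\partial_z f}=\tfrac{\iu}{2}(\xi-\iu\zeta)\widehat f.\]
Therefore
\[\widehat{\partial_{\ov z}\mathcal{B}f}=\tfrac{\iu}{2}(\xi+\iu\zeta)\,\frac{\xi-\iu\zeta}{\xi+\iu\zeta}\,\widehat f=\widehat{\partial_z f}\]
as tempered distributions. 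The product is legitimate because $m\widehat f\in L^2$ and the symbol is a polynomial, so the identity $\partial_{\ov z}\mathcal{B}[f]=\partial_z f$ holds in $\mathcal{S}'(\R^2)$, and in particular in the sense of distributions. If the opposite sign convention for the Fourier transform is used, both symbols change sign together and the computation is unchanged.

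\emph{Weak type $(1,1)$.} This is the main step. Here I identify $\mathcal{B}$ as a Calderón--Zygmund singular integral. Since $\partial_{\ov z}\frac{1}{\pi z}=\delta_0$, we have $\mathcal{B}=\partial_z\circ\partial_{\ov z}^{-1}$, with $\partial_{\ov z}^{-1}f=\frac{1}{\pi z}*f$. Differentiating gives
\[\mathcal{B}f(w)=-\frac1\pi\,\mathrm{p.v.}\int_{\R^2}\frac{f(\omega)}{(w-\omega)^2}\,d\omega .\]
To justify this, I would check that the Fourier transform of the principal value distribution $\mathrm{p.v.}(-1/(\pi z^2))$ equals $m$. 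One route is to differentiate $\widehat{1/(\pi z)}$, a homogeneous function of degree $-1$. The other is to note that $m$ is smooth, homogeneous of degree $0$, and has zero mean on the circle, since it equals $e^{-2\iu\theta}$ in polar coordinates. Homogeneity then forces the kernel to be a multiple of $\mathrm{p.v.}\,\bar z^{\,2}/|z|^4=\mathrm{p.v.}\,1/z^2$, up to the convention.

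The kernel $K(z)=-1/(\pi z^2)$ satisfies the standard hypotheses:
\begin{itemize}
\item the size bound $|K(z)|\lesssim|z|^{-2}$;
\item the smoothness bound $|\nabla K(z)|\lesssim |z|^{-3}$, which implies the Hörmander condition $\int_{|z|>2|y|}|K(z-y)-K(z)|\,dz\le C$;
\item cancellation on annuli, since $z^{-2}=r^{-2}e^{-2\iu\theta}$ has vanishing angular integral.
\end{itemize}
Combined with the $L^2$ boundedness already proved, the Calderón--Zygmund theorem (via the Calderón--Zygmund decomposition of $f$ at height $t$) yields
\[\bigl|\{|\mathcal{B}f|>t\}\bigr|\le \frac{C}{t}\Vert f\Vert_{L^1}\qquad\text{for } f\in L^1\cap L^2,\]
and $\mathcal{B}$ extends by density to $L^1$. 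The only point needing care is matching the multiplier with the kernel under the paper's Fourier normalization. If that becomes cumbersome, I would bypass it: $m$ satisfies the Mikhlin condition $|\partial^\alpha m(\eta)|\lesssim|\eta|^{-|\alpha|}$, so the Mikhlin--Hörmander multiplier theorem gives weak $(1,1)$ directly. For all three facts one can also simply cite~\cite{AIM09}.
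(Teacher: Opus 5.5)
Your proposal is correct and follows essentially the same route as the paper: Plancherel for the $L^2$ isometry, the Fourier symbols of $\partial_z$ and $\partial_{\ov z}$ for the intertwining identity, and Calder\'on--Zygmund theory for weak type $(1,1)$. The paper applies the Hörmander--Mihlin multiplier theorem directly, which is your fallback; your main route through the explicit kernel $-1/(\pi z^2)$ is a valid, more hands-on version of the same Calder\'on--Zygmund argument.
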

\begin{proof}
By the Plancherel theorem, \(\mathcal{B}\) is an isometry from \(L^2(\R^2)\) to \(L^2(\R^2)\). \(\mathcal{B}\) is bounded from \(L^1(\R^2)\) to \(L^{1,\infty}(\R^2)\) by the Hörmander--Mihlin theorem (see~\cite{H60} or the more recent reference~\cite[Th. 6.2.7]{G2008}, or~\cite[Th. 4.5.2]{AIM09} where a bound \(\Vert \mathcal{B}\Vert_{L^1(\R^2)\to L^{1,\infty}(\R^2)}\leq 30\) is given). The second relation is direct by taking the Fourier transform on both sides.
\end{proof}
Eventually we record the following elliptic estimate.

\begin{lemma}\label{lem_L1infty}
Let \(h:\B_1\to \C\) be a harmonic function such that \(\Vert h\Vert_{L^{1,\infty}(\B_1)}\) is finite, then \[\Vert h\Vert_{L^\infty(\B_{\frac{1}{2}})}\lesssim \Vert h\Vert_{L^{1,\infty}(\B_{1})}.\]
\end{lemma}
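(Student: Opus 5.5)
The plan is to combine the mean value property with the elementary fact that a weak-$L^1$ function is integrable to every power $q<1$ on sets of finite measure. First, for $q\in(0,1)$, say $q=\frac12$, I will show the layer-cake bound
\[
\int_{\B_1}|h|^q=\int_0^\infty q t^{q-1}\,|\B_1\cap\{|h|>t\}|\,dt\le \int_0^{s} q t^{q-1}|\B_1|\,dt+\int_s^\infty q t^{q-2}\Vert h\Vert_{L^{1,\infty}(\B_1)}\,dt ,
\]
and choose $s=\Vert h\Vert_{L^{1,\infty}(\B_1)}$. This gives $\int_{\B_1}|h|^{1/2}\lesssim \Vert h\Vert_{L^{1,\infty}(\B_1)}^{1/2}$.

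Second, I need a mean value inequality for harmonic functions with exponent below one. This is the only nontrivial step, since $|h|^{1/2}$ need not be subharmonic for a complex-valued harmonic $h$. I would handle it by treating real and imaginary parts separately. The quasi-norm $L^{1,\infty}$ of each part is bounded by that of $h$, because $|\mathrm{Re}\,h|\le|h|$. So it suffices to prove the inequality for a real harmonic $h$.

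For real harmonic $h$ I would invoke the classical fact that, for any $q>0$,
\[
\sup_{\B_{\rho}}|h|\lesssim_{q} (R-\rho)^{-2/q}\Big(\int_{\B_R}|h|^q\Big)^{1/q}, \qquad \tfrac12\le\rho<R\le1 .
\]
It follows from the $q=1$ mean value property by the standard interpolation and iteration argument. Concretely, set $M(\rho)=\sup_{\B_\rho}|h|$. For $z\in\B_\rho$ the mean value property on $\B_{z,R-\rho}$ gives
\[
|h(z)|\lesssim (R-\rho)^{-2}\int_{\B_R}|h|\le (R-\rho)^{-2}M(R)^{1-q}\int_{\B_R}|h|^q .
\]
Young's inequality then yields
\[
M(\rho)\le \tfrac12 M(R)+C(R-\rho)^{-2/q}\Big(\int_{\B_1}|h|^q\Big)^{1/q}.
\]
The usual iteration lemma absorbs the $\frac12 M(R)$ term, provided $M$ is bounded on $[\frac12,1)$. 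This holds because $h$ is smooth inside $\B_1$; to be safe I would run the iteration on radii in $[\frac12,r]$ for $r<1$ and then let $r\to1$.

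Combining the two steps with $q=\frac12$ gives
\[
\Vert h\Vert_{L^\infty(\B_{1/2})}\lesssim\Big(\int_{\B_1}|h|^{1/2}\Big)^{2}\lesssim \Vert h\Vert_{L^{1,\infty}(\B_1)},
\]
which is the claim. I expect the second step, the sub-$L^1$ mean value inequality, to be the main obstacle. It is nonetheless classical, and the interpolation and absorption argument above handles it directly.
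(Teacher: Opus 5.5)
Your proof is correct, and it takes a somewhat different route from the paper's.

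The paper never passes through a strong $L^q$ norm. It interpolates directly at the level of the weak norm, truncating the layer-cake integral at the sup norm to get $\Vert h\Vert_{L^2(\B_r)}^2\le 2\Vert h\Vert_{L^\infty(\B_r)}\Vert h\Vert_{L^{1,\infty}(\B_1)}$. It combines this with the $L^2$ mean value bound $\Vert h\Vert_{L^\infty(\B_s)}\le \pi^{-1/2}(r-s)^{-1}\Vert h\Vert_{L^2(\B_r)}$. It then iterates the resulting multiplicative inequality $\Vert h\Vert_{L^\infty(\B_{r_k})}\lesssim 2^k\Vert h\Vert_{L^\infty(\B_{r_{k+1}})}^{1/2}\Vert h\Vert_{L^{1,\infty}(\B_1)}^{1/2}$ along radii $r_k\uparrow\frac34$, with the constants controlled by a convergent infinite product.

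You instead split the argument into two independent classical facts:
\begin{enumerate}[label=(\roman*)]
\item the embedding of weak-$L^1$ into $L^{1/2}$ on sets of finite measure;
\item the sub-$L^1$ mean value inequality for harmonic functions, with absorption done by Young's inequality and the standard iteration lemma.
\end{enumerate}
The mechanism is the same in both: a sup norm on a larger ball appears on the right and is absorbed by iterating over shrinking radii. The paper's version is shorter and fully explicit. Yours is more modular and gives the stronger intermediate estimate $\sup_{\B_{1/2}}|h|\lesssim\bigl(\int_{\B_1}|h|^{1/2}\bigr)^2$, which works for any exponent $q>0$. You also handle the a priori finiteness of the interior sup explicitly, which the paper uses only implicitly.

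One small remark: the reduction to real and imaginary parts is unnecessary. Your interpolation step only uses $|h(z)|\le\fint_{\B_{z,R-\rho}}|h|$, which holds verbatim for complex-valued harmonic $h$. The stated motivation is also slightly off, since $|h|^{1/2}$ fails to be subharmonic even for real harmonic $h$ (e.g.\ $h=x_1$). Your argument never uses such subharmonicity, so the split is harmless.
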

\begin{proof}
For any \(r\in (0,1)\), we have
\begin{align*}
\Vert h\Vert_{L^2(\B_r)}^2&=\int_{0}^{\Vert h\Vert_{L^\infty(\B_r)}}2t|\B_r\cap\{|h|>t\}|dt\\
&\leq 2\Vert h\Vert_{L^\infty(\B_r)}\Vert h\Vert_{L^{1,\infty}(\B_1)}
\end{align*}
For any \(s,r\) such that \(\frac{1}{2}<s<r<1\), by the mean value formula applied to \(h\) we have
\[\Vert h\Vert_{L^\infty(\B_s)}\leq \frac{\pi^{-\frac{1}{2}}}{r-s}\Vert h\Vert_{L^2(\B_r)}.\]
Let \(r_k=\frac{3}{4}-\frac{1}{2^{k+2}}\), so that
\[\frac{1}{2}=r_0<r_1<r_2<\hdots\to \frac{3}{4}.\]
Then for every \(k\in\N\) we have, combining the two previous inequalities:
\[\Vert h\Vert_{L^\infty(\B_{r_k})}\leq \pi^{-\frac{1}{2}}2^{k+\frac{7}{2}}\Vert h\Vert_{L^{\infty}(\B_{r_{k+1}})}^\frac{1}{2}\Vert h\Vert_{L^{1,\infty}(\B_{1})}^\frac{1}{2}.\]
By induction on \(k\):
\[\Vert h\Vert_{L^\infty(\B_{\frac{1}{2}})}\leq \prod_{\ell=0}^{k-1}\left(\pi^{-\frac{1}{2}}2^{\ell+\frac{7}{2}}\right)^\frac{1}{2^{\ell}}\Vert h\Vert_{L^\infty(\B_{r_k})}^{\frac{1}{2^k}}\Vert h\Vert_{L^{1,\infty}(\B_{1})}^{1-\frac{1}{2^k}}.\]
Taking \(k\to +\infty\), we obtain
\[\Vert h\Vert_{L^\infty(\B_{\frac{1}{2}})}\leq \prod_{\ell=0}^{+\infty}\left(\pi^{-\frac{1}{2}}2^{\ell+\frac{7}{2}}\right)^\frac{1}{2^{\ell}}\Vert h\Vert_{L^{1,\infty}(\B_{1})}.\]
The factor on the right-hand side is finite, so this proves the result.
\end{proof}

We now show that for stationary solutions, \(\Ho{u}\) is always bounded, and universally bounded next to a free boundary point.

\begin{lemma}\label{lem_hopf_bounded}
Let \(d=2\). There exist \(C,\alpha>0\) such that the following holds. Let \(u\in
H^1(\B_1,\R^p)\) be a stationary solution of the Bernoulli problem,~\eqref{eq:critical_vectorial}. Then
\[\Vert \Ho{u}\Vert_{L^\infty(\B_\frac{1}{2})}\leq C\left(1+\Vert \Ho{u}\Vert_{L^1(\B_1)}\right)\]
and more precisely, if \(\mathcal{F}_u\cap\B_1\neq\emptyset\), then for almost every \(z\in \B_\frac{1}{2}\),
\[|\Ho{u}(z)|\leq C\left(1+\mathrm{dist}(z,\mathcal{F}_u)^\alpha
\Vert\Ho{u}\Vert_{L^1(\B_1)}\right).\]
\end{lemma}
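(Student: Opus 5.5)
We apply Theorem~\ref{th_main} to \(\phi=\Ho{u}\), viewed as an \(\R^2\)-valued function, but with \(L^1\)-type hypotheses replacing the \(L^2\) ones. The natural local comparison function on a ball \(\B_{z,r}\subset\B_1\) is
\[
\phi_{\B_{z,r}}:=\Ho{u}+\tfrac14\mathcal{B}\bigl[\chi_{\B_{z,r}\cap\{u=0\}}\bigr].
\]
By Lemma~\ref{lem_equ} and Lemma~\ref{lem_B}, \(\partial_{\ov z}\phi_{\B_{z,r}}=0\) in \(\B_{z,r}\). Hence \(\phi_{\B_{z,r}}\) is holomorphic, and in particular harmonic, there.

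Note first that \(\Ho{u}=0\) a.e. on \(\{u=0\}\), so \(\{\Ho u\neq0\}\subset\{u\ne0\}\) up to null sets. The boundary \(\mathcal{F}\) relevant below is therefore \(\mathcal{F}_u\), and the zero-set measure that appears is \(|\B_{z,r}\cap\{u=0\}|\) rather than \(|\{\phi=0\}|\). The proof of Theorem~\ref{th_main} only uses the zero set through the measure in \eqref{eq_hyp1}, the restriction of integrals to \(\{\phi=0\}\) (which we replace by \(\{u=0\}\subset\{\phi=0\}\)), and density at Lebesgue points. So it goes through with \(\{u=0\}\) in place of \(\{\phi=0\}\).

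The two hypotheses then become the following.
\begin{itemize}
\item For \eqref{eq_hyp1}: Lemma~\ref{lem_B} gives \(\|\phi-\phi_{\B_{z,r}}\|_{L^2}\le \frac14|\B_{z,r}\cap\{u=0\}|^{1/2}\). So the \(L^2\) estimate \eqref{eq_hyp1} holds exactly, using \(L^2\) boundedness of \(\mathcal{B}\) with norm \(1\).
\item For \eqref{eq_hyp2}: harmonicity gives \(r\|\nabla\phi_{\B}\|_{L^\infty(\B_{z,r/2})}\lesssim D(\phi_{\B},r)\) by interior gradient estimates for harmonic functions, applied to \(\phi_\B-\fint\phi_\B\).
\end{itemize}

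The real difficulty is integrability. We only know \(\Ho u\in L^1\), not \(L^2\), so the \(L^2\)-based deviation \(D\) may be infinite at the top scale. I would handle this by a preliminary step producing an \(L^2\) (indeed \(L^\infty\)) bound on \(\B_{3/4}\) in terms of \(1+\|\Ho u\|_{L^1(\B_1)}\).

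On \(\B_1\) set \(h=\Ho u+\frac14\mathcal{B}[\chi_{\B_1\cap\{u=0\}}]\), which is holomorphic. We have \(\|h\|_{L^{1,\infty}(\B_1)}\lesssim\|\Ho u\|_{L^1}+\|\mathcal{B}\chi\|_{L^{1,\infty}}\lesssim 1+\|\Ho u\|_{L^1(\B_1)}\), using the quasi-triangle inequality in \(L^{1,\infty}\) together with the weak-type (1,1) bound of Lemma~\ref{lem_B}. Lemma~\ref{lem_L1infty}, rescaled, then bounds \(h\) in \(L^\infty(\B_{7/8})\). In addition \(\mathcal{B}\chi\) has \(L^2\) norm at most \(\sqrt\pi\). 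Together these give \(\|\Ho u\|_{L^2(\B_{7/8})}\lesssim 1+\|\Ho u\|_{L^1(\B_1)}\).

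Now apply Theorem~\ref{th_main} (in the adapted form above) to the rescaling \(\zeta\mapsto\Ho u(z_0+\frac18\zeta)\) around points \(z_0\in\B_{1/2}\); the hypotheses are stable under this rescaling. This yields, for a.e. \(z\),
\[
|\Ho u(z)|\lesssim 1+\mathrm{dist}(z,\mathcal{F}_u)^\alpha\bigl(1+\|\Ho u\|_{L^1(\B_1)}\bigr).
\]
Absorbing constants gives the second claim, since \(\mathrm{dist}\le 1\).

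For the first claim, when \(\mathcal{F}_u\cap\B_1=\emptyset\), \(\{u=0\}\) is null or has full measure locally. In that case \(\Ho u\) is itself holomorphic, or vanishes, and Lemma~\ref{lem_L1infty} (or the mean value property) gives \(\|\Ho u\|_{L^\infty(\B_{1/2})}\lesssim\|\Ho u\|_{L^1}\). When \(\mathcal{F}_u\cap\B_1\neq\emptyset\), the first claim follows from the second.

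The main obstacle is the \(L^1\to L^2\) upgrade at the top scale and checking that the proof of Theorem~\ref{th_main} genuinely only uses \(\{u=0\}\subset\{\phi=0\}\). At the smaller scales the \(L^2\) bound on \(\B_{7/8}\) keeps everything finite.
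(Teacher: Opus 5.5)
Your proposal is correct and follows essentially the same route as the paper: the same comparison functions $\Ho{u}+\frac14\mathcal{B}[\chi_{\B_{z,r}\cap\{u=0\}}]$ (the paper uses four times these), the $L^2$-isometry of $\mathcal{B}$ for \eqref{eq_hyp1}, harmonicity for \eqref{eq_hyp2}, and the weak-type bound together with Lemma~\ref{lem_L1infty}, applied to the holomorphic function on $\B_1$, to upgrade $\Ho{u}$ from $L^1$ to $L^2$ on an inner ball before applying Theorem~\ref{th_main} on rescaled small balls. The only difference is cosmetic: you run Theorem~\ref{th_main} with $\{u=0\}$ in place of $\{\phi=0\}$ and so obtain $\mathcal{F}_u$ directly, whereas the paper verifies \eqref{eq_hyp1} through $\{u=0\}\subset\{\Ho{u}=0\}$ and then passes to $\mathcal{F}_u$ without comment, which is legitimate because $\mathcal{F}_u\subset\mathcal{F}_{\Ho{u}}$ (if $\Ho{u}$ vanished a.e. near a point of $\mathcal{F}_u$, Lemma~\ref{lem_equ} would force $\chi_{u=0}$ to be antiholomorphic, hence constant, there).
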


Before proving the above lemma, note that it immediately implies  Corollary~\ref{cor_lipschitz_crit} since if \(p=1\) then
\[
|\Ho{u}|=\frac{1}{4}|\nabla u|^2.
\]

\begin{proof}[Proof of Lemma~\ref{lem_hopf_bounded}]
If $\mathcal{F}_u\cap\B_1=\emptyset$ then $\Ho{u}$ is harmonic and there is nothing to prove, so we assume $\mathcal{F}_u\cap\B_1\neq\emptyset$.
Let
\[
\Phi=4\Ho{u}
\]
and for any ball \(\B_{z,r}\subset\B_1\), let
\[
 \Phi_{\B_{z,r}}:=4\Ho{u}+\mathcal{B}\left[\chi_{\B_{z,r}\cap \{u=0\}}\right].\]
By Lemmas~\ref{lem_equ} and~\ref{lem_B}, we have \(\partial_{\ov{z}}\Phi_{\B_{z,r}}=0\) in \(\B_{z,r}\): thus \(\Phi_{\B_{z,r}}\) is harmonic in \(\B_{z,r}\) and satisfies the hypothesis~\eqref{eq_hyp2} for some universal constant \(K>0\). Then, we check that the hypothesis~\eqref{eq_hyp1} holds (up to some scalar factor):
\begin{align*}
\int_{\B_{z,r}}\left|\Phi-\Phi_{\B_{z,r}}\right|^2&=\int_{\B_{z,r}}\left|\mathcal{B}\left[\chi_{\B_{z,r}\cap \{u=0\}}\right]\right|^2\leq\int_{\B_{z,r}}\left|\chi_{\B_{z,r}\cap \{u=0\}}\right|^2&\text{ by lemma }\ref{lem_B}\\
&\leq|\B_{z,r}\cap\{u=0\}|\leq |\B_{z,r}\cap\{\Phi=0\}|.
\end{align*}
Thus, by application of Theorem~\ref{th_main} on every ball \(\B_{z,\frac{1}{4}}\) for \(z\in\B_{\frac{1}{2}}\), for some \(\alpha>0\) we have for almost every \(z\in\B_\frac{1}{2}\):
\[\left|\Ho{u}(z)\right|\lesssim 1+\mathrm{dist}(z,\mathcal{F}_u)^\alpha\Vert \Ho{u}\Vert_{L^2(\B_\frac{3}{4})}.\]
Hence to conclude it is sufficient to prove that the last term is controlled by \(1+\Vert\Ho{u}\Vert_{L^1(\B_1)}\). Indeed, using again the properties of the operator \(\mathcal{B}\), we have
\begin{align*}
4\Vert \Ho{u}\Vert_{L^2(\B_\frac{3}{4})}&\leq \left\Vert \mathcal{B}\left[\chi_{\B_{1}\cap \{u=0\}}\right]\right\Vert_{L^2(\B_\frac{3}{4})}+\Vert \Phi_{\B_1}\Vert_{L^2(\B_{\frac{3}{4}})}\\
&\lesssim \left\Vert \chi_{\B_{1}\cap \{u=0\}}\right\Vert_{L^2(\B_1)}+\Vert\Phi_{\B_1}\Vert_{L^{1,\infty}(\B_1)}&\text{ by Lem. }\ref{lem_B}\text{ and }\ref{lem_L1infty}\\
&\lesssim 1+\left\Vert \mathcal{B}\left[\chi_{\B_{1}\cap \{u=0\}}\right]\right\Vert_{L^{1,\infty}(\B_1)}+\Vert \Ho{u}\Vert_{L^{1,\infty}(\B_1)}\\
&\lesssim 1+\Vert \chi_{\B_{1}\cap \{u=0\}}\Vert_{L^{1}(\B_1)}+\Vert \Ho{u}\Vert_{L^{1}(\B_1)} &\text{ by Lem. }\ref{lem_B}.
\end{align*}

\end{proof}

\begin{remark}\label{rem_counterexample_crit}
As mentioned in the introduction, Corollary~\ref{cor_lipschitz_crit} does not extend to a vectorial setting (with \(u=(u_1,u_2,\hdots,u_p)\)). Indeed if  \(p=2\) one can consider
\[(u_1(z),u_2(z))=(v(z),v(-z)),\]
where
\[v(re^{i\theta})=r^\frac{1}{2}\cos\left(\frac{\theta}{2}\right),\ \forall r>0,\theta\in [-\pi,\pi].\]
\(v\)  itself is not a stationary solution of the Bernoulli problem: denoting by \(\sqrt{\cdot}\) the principal determination of the square root on \(\C\setminus\R_-\) (such that $v(z)=\Re\sqrt{z}$), we have
\[\partial_z v=\frac{1}{4\sqrt{z}},\]
so \((\partial_{z}v)^2=\frac{1}{16z}\): this is an integrable function, which is not harmonic in \(\C\) due to the singularity at the origin, so $\partial_{\ov{z}}\Ho{v}\neq 0(=\partial_z\chi_{v\neq 0})$. However we still have
\[\Ho{u}=(\partial_{z}u_1)^2+(\partial_{z}u_2)^2=\frac{1}{16z}+\frac{1}{16(-z)}=0\]
so \(u\) is a  stationary solution, and is not Lipschitz.
\end{remark}

\subsection{Lipschitz regularity of vector-valued stationary solutions}
The goal of this section is to prove Theorem~\ref{th_crit_vectorial_intro}. In view of Lemma~\ref{lem_hopf_bounded}, it is a consequence of the following:

\begin{theorem}\label{th_crit_vectorial}
There exists \(C>0\) such that the following  holds. Let \(u\in H^1(\B_1,\R^p)\) that satisfies~\eqref{eq_structure_weak}. Then
\[\Vert \nabla u\Vert_{L^\infty(\B_\frac{1}{2})}\leq  C\left(\Vert \Ho{u}\Vert_{L^\infty(\B_1)}^\frac{1}{2}+\Vert u\Vert_{L^2(\B_1)}\right).\]
\end{theorem}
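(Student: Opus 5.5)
The plan is to reduce to a normalized situation and then run a subharmonicity and compactness scheme, following the outline in the introduction. Both sides of the estimate are homogeneous of degree one under $u\mapsto\lambda u$, since $\Ho{\lambda u}=\lambda^2\Ho{u}$ and \eqref{eq_structure_weak} is invariant under this scaling. After dividing by $\Vert\Ho{u}\Vert_{L^\infty(\B_1)}^{1/2}$ (the case $\Ho{u}=0$ is handled by replacing it with $\eps>0$ and letting $\eps\to 0$), we may therefore assume $\Vert\Ho{u}\Vert_{L^\infty(\B_1)}\leq 1$. It then suffices to show $\Vert\nabla u\Vert_{L^\infty(\B_{1/2})}\lesssim 1+\Vert u\Vert_{L^2(\B_1)}$.

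\textbf{Step 1: a subharmonicity lemma.} We show that for every smooth $Q:\R^p\to\R$ that is $2$-convex (the sum of the two smallest eigenvalues of $D^2Q$ is nonnegative) with $\nabla Q(0)=0$, the function $C_Q|z|^2+Q\circ u$ is subharmonic, where $C_Q$ depends only on $\Vert D^2Q\Vert_\infty$. Formally,
\[
\Delta(Q\circ u)=D^2Q(u)[\partial_xu,\partial_xu]+D^2Q(u)[\partial_yu,\partial_yu]+\sum_k\partial_kQ(u)\Delta u_k .
\]
\begin{itemize}
\item \emph{Last term.} The structure condition \eqref{eq_structure_weak} says $u_l\Delta u_k=0$. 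Writing $\partial_kQ(u)=\sum_l u_l\int_0^1\partial_{kl}Q(tu)\,dt$ (this uses $\nabla Q(0)=0$), the last term vanishes. To justify this weakly, I would test \eqref{eq_structure_weak} with $\varphi$ times smooth functions of $u$, after a truncation and mollification argument.
\item \emph{Hessian terms.} The bound $|\Ho{u}|\leq 1$ means $\big||\partial_xu|^2-|\partial_yu|^2\big|\leq 4$ and $|\partial_xu\cdot\partial_yu|\leq 2$. Hence the pair $(\partial_xu,\partial_yu)$ is, up to an $O(1)$ perturbation of its Gram matrix, a multiple of an orthonormal pair. By $2$-convexity the two Hessian terms are therefore $\geq -C\Vert D^2Q\Vert_\infty$.
\end{itemize}

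\textbf{Step 2: $L^\infty$ and Hölder bounds.} Taking $Q=|u|^2$ in Step 1, the mean value inequality gives $\Vert u\Vert_{L^\infty(\B_{3/4})}\lesssim 1+\Vert u\Vert_{L^2(\B_1)}$. A Caccioppoli inequality then follows by testing \eqref{eq_structure_weak} with $\varphi=\eta^2$ and summing over $k=l$, which gives $\int\eta^2|\nabla u|^2\leq 2\int\eta|\nabla\eta||u||\nabla u|$. For Hölder continuity, I would apply Step 1 with convex $Q$ vanishing to second order at $0$, for instance $Q(v)=|v|^2-2a\cdot v$ localized near a value $a$, or more simply $|u-a|^2$ on balls where $u$ stays away from $0$, where $u$ is harmonic. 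Combined with the $|z|^2$ correction, this yields a Campanato-type oscillation decay $\operatorname{osc}_{\B_r}u\lesssim r^{\gamma}$.

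\textbf{Step 3: Lipschitz bound by a dichotomy.} Set $\theta(r)=\bigl(\fint_{\B_{z,r}}|\nabla u|^2\bigr)^{1/2}$ at a point $z$, and argue by contradiction and compactness on the blow-ups $u_r=(u(z+r\cdot)-u(z))/(r\theta(r))$ with $\theta(r)\gg 1$. For these blow-ups the Hopf differential tends to zero, so limits are weakly conformal maps that satisfy the structure condition; they are harmonic on their support and, being conformal, harmonic across a zero set of measure zero. The dichotomy is: either $\theta(\tau r)\leq\frac12\theta(r)+C$, or $u$ is close in an annulus to a nondegenerate linear conformal map $z\mapsto \Re(Az)+\Im(Bz)$.

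The main obstacle is the second alternative: one must show that closeness to a linear conformal map \emph{propagates} to all smaller scales with summable errors. My first attempt would be a quantitative degree argument on the annulus: closeness to a nondegenerate conformal map gives $u\neq 0$ on the annulus with nonzero winding in a $2$-plane. I would combine this with Step 1, applied to $Q$ equal to the squared distance to that plane (which is $2$-convex), to show that $u$ stays close to the plane and that the zero set can only sit at the centre. Then $u$ is harmonic in the punctured ball, the singularity is removable, and interior harmonic estimates bound $\nabla u$. Iterating the two alternatives over dyadic scales then gives $|\nabla u(z)|\lesssim 1+\Vert u\Vert_{L^2(\B_1)}$ for every $z\in\B_{1/2}$.
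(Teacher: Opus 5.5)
Your Step~1 is sound; it is the subharmonicity statement the paper relies on. The proposal has a genuine gap, however, and it sits in Step~3 at exactly the obstacle you flag.

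\textbf{Step~3.} A degree argument only yields the \emph{existence} of zeros of $\pi_Pu$ inside a circle on which $u$ winds around $0$. It never gives their absence away from the centre. The maximum principle for $|\pi_{P^\perp}u|^2$ controls only the normal component.

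In fact no function from Step~1 can do this job. A $2$-convex $Q$ restricts to a subharmonic function on the plane $P$, so $\max_\Gamma Q\geq Q(0)$ for every circle $\Gamma\subset P$ centred at $0$. Hence the maximum principle can never certify $u\neq0$ inside a circle on which $u$ is close to a linear conformal map. Applying it on smaller balls off the centre would require closeness at those scales, which is precisely what must be propagated.

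The real danger is that $u$, once harmonic on a ring, carries singular Laurent modes ($b_0\log r$, $b_nr^{-|n|}e^{in\theta}$). These let $\Norm{u_s}$ grow slowly over many scales while $u_s$ stays close, scale by scale, to a drifting linear conformal map. The errors are then not summable and no Lipschitz bound follows.

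The missing idea is the paper's mode analysis:
\begin{itemize}
\item Fix the scale-invariant linear part $a_1z+a_{-1}\ov{z}$ of the Laurent expansion. Let $\rho$ be the first radius at which $u_\rho$ leaves the $\frac{\delta}{4}\Norm{u}$-neighbourhood of this linear part in $L^2(\B_1\setminus\B_{\frac12})$.
\item Down to that radius $u$ is harmonic on $\B_{\frac12}\setminus\B_{2^{-k-1}\rho}$. The regular modes are $O(\frac{\delta}{k}\Norm{u})$ by closeness at the top scale.
\item So at scale $\rho$ the singular modes carry $\gtrsim\delta\Norm{u}$, and over the next $\ell\le k$ dyadic scales they grow like $2^{\ell}/(1+\ell)$.
\item This contradicts the mild growth bound $\Normr{u_{\frac12}}\le(1+\eps)\Normr{u}$ for large $u\in\mathcal{S}$ (Lemma~\ref{lem_firstdecay}). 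That bound comes from compactness, since harmonic conformal maps vanishing at $0$ do not grow under $u\mapsto u_{\frac12}$.
\item The modes $b_{\pm1}$, which interact with the linear part, are controlled separately (Lemma~\ref{lem_control_problematic_mode}).
\end{itemize}
Hence $\rho=0$, $u$ is harmonic in the punctured disc, and the singularity is removable.

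\textbf{Step~2.} This step also fails as written. The functions $|v|^2-2a\cdot v$ and $|v-a|^2$ have $\nabla Q(0)=-2a\neq0$, so Step~1 does not apply to them. No Campanato argument is available either, because Caccioppoli fails for $u-c$. Indeed $\Delta|u-c|^2=2|\nabla u|^2-2c\cdot\Delta u$, and $c\cdot\Delta u$, a distribution carried by $\{u=0\}$, can be positive (take $u=|x|$, $c>0$). Working on balls ``where $u$ stays away from $0$'' is circular.

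What is needed is Step~1 applied to the \emph{non-convex} $2$-convex function $Q_e(v)=|v|^2-2(v\cdot e)^2$, with $e=m/|m|$ and $m=\fint_{\partial\B_\rho}u$. Suppose $\int_{\B_{2r}\setminus\B_r}|u|^2\gg r^2\int_{\B_{2r}\setminus\B_r}|\nabla u|^2$. Then on some circle $\partial\B_\rho$ with $\rho\in(r,2r)$, $u$ is uniformly close to $m$, so $Q_e\circ u\lesssim-|m|^2$ there. The maximum principle for $Q_e\circ u+C|z|^2$ then gives either $|m|\lesssim r$, or $Q_e\circ u<0$ in $\B_r$; in the latter case $u\neq0$ and hence $\Delta u=0$ in $\B_r$. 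This trichotomy, combined with Caccioppoli for $u$ itself, gives $\int_{\B_r}|\nabla u|^2\lesssim r^\eps$.

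The resulting equicontinuity is indispensable for your Step~3. The structure condition is quadratic in $\nabla u$ and does not survive weak $H^1$ limits, so the compactness argument needs locally uniform convergence.

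\textbf{Smaller points.}
\begin{itemize}
\item The blow-ups $u(z+r\cdot)-u(z)$ destroy $u_k\Delta u_l=0$ unless $u(z)=0$. You should centre at zeros of $u$ and use interior harmonic estimates elsewhere.
\item Harmonicity of a limit across $\{v=0\}$ comes from polarity of that set, via subharmonicity of $\log|v|$ for harmonic conformal $v$. Having measure zero is not enough.
\end{itemize}
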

%The \(L^2(\B_1)\) norm on \(\Ho{u}\) may be replaced by any \(L^q(\B_1)\) norm for \(q>1\), and this only intervenes in the proof of lemma \ref{lem_equicontinuity}.

The plan of the proof is as follows.
\begin{itemize}
\item First, we prove (in Lemma~\ref{lem_equicontinuity}) an interior Hölder estimate on \(u\).  The central argument is the observation that
\[\Delta (|u|^2-2(u\cdot e)^2)\geq -8|\Ho{u}|\]
for any unit vector \(e\in\mathbb{S}^{p-1}\), which allows us to make a dichotomy between solutions: either \(u\) satisfies ``good'' regularity estimates in \(\B_\frac{1}{2}\), or \(\int_{\B_1\setminus\B_\frac{1}{2}}|u|^2\lesssim \int_{\B_1\setminus\B_\frac{1}{2}}|\nabla u|^2\), which allows us to prove a polynomial growth for the integral of \(|\nabla u|^2\).
\item Using this equicontinuity, we prove (in Lemma~\ref{lem_compactness}) that any sufficiently large solution must be close to some harmonic conformal map. We deduce a mild decay estimate by compactness ( see Lemma~\ref{lem_firstdecay}). Moreover a harmonic conformal map that vanishes at the origin either satisfies some strong decay estimate (see Lemma~\ref{lem_seconddecay}) or is arbitrarily close to some linear conformal map.
\item Since non-zero linear conformal maps do not vanish outside any neighborhood of the origin, any solution that is sufficiently close to such a map satisfies the equation \(\Delta u=0\) on some large ring domain. We conclude by proving (in Lemma~\ref{lem_analysisunstablemodes}) the absence of singular harmonic modes (i.e. modes of the form \(r^{-|n|}e^{i n \theta}\) and \(\log(r)\)) in the Laurent decomposition of \(u\), using the fact that these modes do not satisfy the mild decay estimate (Lemma~\ref{lem_firstdecay}) previously obtained.
\end{itemize}

\begin{lemma}\label{lem_equicontinuity}
There exists \(\alpha\in (0,1)\) such that the following holds. Let \(u\in H^1(\B_1,\R^p)\) that satisfies~\eqref{eq_structure_weak}, then
\[\Vert u\Vert_{\mathcal{C}^\alpha(\B_\frac{1}{2})}\lesssim \Vert\Ho{u}\Vert_{L^\infty(\B_1)}^\frac{1}{2}+\Norm{u}.\]
\end{lemma}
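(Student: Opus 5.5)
The plan is to prove a Morrey-type decay $\int_{\B_{x_0,r}}|\nabla u|^2\lesssim r^{2\alpha}\bigl(\Vert\Ho{u}\Vert_{L^\infty(\B_1)}+\Norm{u}^2\bigr)$ for $x_0\in\B_{\frac12}$, $r\le\frac14$, and then conclude by Morrey's lemma. The mechanism is an iteration with a dichotomy on annuli, driven by the differential inequality announced in the plan.

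\textbf{Step 1 (algebraic inequality).} From \eqref{eq_structure_weak} we get $\Delta(u_ku_l)=2\nabla u_k\cdot\nabla u_l$ weakly. Hence $\Delta|u|^2=2|\nabla u|^2$ and $\Delta(u\cdot e)^2=2|\nabla(u\cdot e)|^2$. Write $\partial_zu=a+\iu b$ with $a,b\in\R^p$. Then $|\nabla u|^2=4(|a|^2+|b|^2)$ and $\Ho{u}=|a|^2-|b|^2+2\iu\, a\cdot b$. Also $|\partial_z u\cdot e|^2$ is at most the top eigenvalue of $aa^T+bb^T$, which equals $\frac12\bigl(|a|^2+|b|^2+|\Ho{u}|\bigr)$. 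Together these give
\[
\Delta\bigl(|u|^2-2(u\cdot e)^2\bigr)\geq-8|\Ho{u}|.
\]
Therefore $g_e:=|u|^2-2(u\cdot e)^2+2\Vert\Ho{u}\Vert_{L^\infty}|x|^2$ is subharmonic for every $e\in\mathbb S^{p-1}$. Both the structure condition and the bound on $\Vert\Ho{u}\Vert_{L^\infty}$ are invariant under $u\mapsto u(x_0+r\cdot)$, since $\Ho{}$ picks up a factor $r^2\le1$. So it suffices to prove a one-step decay on $\B_1$ and iterate it.

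\textbf{Step 2 (dichotomy on $A=\B_1\setminus\B_{\frac12}$).} Let $c=\fint_Au$ and let $M$ be large, to be fixed.

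\emph{Case (a):} $\int_A|u|^2\le M\bigl(\int_A|\nabla u|^2+\Vert\Ho{u}\Vert_{L^\infty}\bigr)$. Test $\Delta|u|^2=2|\nabla u|^2$ against a cutoff $\eta^2$ that is $1$ on $\B_{\frac12}$ and supported in $\B_1$. This gives $\int_{\B_{1/2}}|\nabla u|^2\lesssim\int_A|u|^2\lesssim M\bigl(\int_A|\nabla u|^2+\Vert\Ho{u}\Vert_{L^\infty}\bigr)$. Hole-filling then yields $\int_{\B_{1/2}}|\nabla u|^2\le\theta\int_{\B_1}|\nabla u|^2+C\Vert\Ho{u}\Vert_{L^\infty}$ with $\theta=\frac{CM}{CM+1}<1$.

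\emph{Case (b):} the reverse inequality holds. By Poincaré on $A$, $\int_A|u-c|^2\lesssim\int_A|\nabla u|^2\lesssim M^{-1}\int_A|u|^2$, so $|c|^2$ dominates both the oscillation and $\Vert\Ho{u}\Vert_{L^\infty}$ by a factor of order $M$. Take $e=c/|c|$ and apply the sub-mean-value property of $g_e$ on circles/annuli centered at points of $\B_{\frac14}$. Since $g_e=|u-(u\cdot e)e|^2-(u\cdot e)^2+O(\Vert\Ho{u}\Vert)$, and its average over $A$ is $\le-|c|^2+O(M^{-1}|c|^2)$, we get $(u\cdot e)^2\ge\frac12|c|^2$ a.e.\ in $\B_{\frac14}$. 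In particular $|u|$ is bounded below there. Then \eqref{eq_structure_weak}, tested with $\varphi=\psi u_k/|u|^2$ (legitimate because $|u|\ge\frac{|c|}{\sqrt2}$ on $\B_{\frac14}$) and summed over $k$, gives $\Delta u=0$ weakly in $\B_{\frac14}$. Interior harmonic estimates then give $\int_{\B_\rho}|\nabla u|^2\lesssim\rho^2\int_{\B_{1/4}}|\nabla u|^2$, which is again a decay with a fixed factor $\theta<1$ once $\rho$ is chosen small.

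\textbf{Step 3 (iteration).} In both cases we get, for a fixed $\rho\in(0,\frac12)$, $\int_{\B_{x_0,\rho r}}|\nabla u|^2\le\theta\int_{\B_{x_0,r}}|\nabla u|^2+Cr^2\Vert\Ho{u}\Vert_{L^\infty}$. Iterating gives the Morrey bound with $2\alpha=\min\bigl(\log\theta/\log\rho,1\bigr)$, up to slightly decreasing $\alpha$. The starting quantity $\int_{\B_{3/4}}|\nabla u|^2\lesssim\Norm{u}^2$ comes from the Caccioppoli inequality $\Delta|u|^2=2|\nabla u|^2$. Morrey's lemma then gives the Hölder seminorm bound. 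The $L^\infty$ part of the $\mathcal C^\alpha$ norm follows from the mean value inequality for the subharmonic function $|u|^2$.

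\textbf{Main obstacle.} The delicate point is Case (b), in two respects. First, the structure condition is not invariant under adding constants, so one cannot simply subtract $c$; this is exactly why the subharmonic quantity $g_e$, with $e$ chosen along the mean, is needed. Second, one must turn the sub-mean-value inequality for $g_e$ into a pointwise lower bound on $|u\cdot e|$ using only $H^1$ information. I would handle this through Lebesgue points and averaging over annuli rather than circles, and this is where the constants must be tracked carefully to make $M$ independent of $u$.
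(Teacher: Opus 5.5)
Your proposal is correct and is essentially the paper's argument. The paper also runs a dichotomy on dyadic annuli driven by $\Delta\bigl(|u|^2-2(u\cdot e)^2\bigr)\geq-8|\Ho{u}|$ with $e$ along the mean, obtains $\Delta u=0$ by testing \eqref{eq_structure_weak} with $\psi u_k/|u|^2$, and concludes with the Caccioppoli inequality coming from $\Delta|u|^2=2|\nabla u|^2$ and an iteration giving $\int_{\B_r}|\nabla u|^2\lesssim r^{\eps}$.

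The differences are only technical. The paper selects a good circle $\partial\B_\rho$ and uses the one-dimensional Sobolev embedding plus the maximum principle, where you use Poincaré and annular averaging. For uniform constants you should take centers in a smaller ball such as $\B_{1/8}$, or average with the Poisson kernel, because annuli around centers near $\partial\B_{1/4}$ that fit in $\B_1\setminus\B_{1/2}$ become arbitrarily thin. The paper also keeps the small-mean alternative $\Vert u\Vert_{L^\infty(\B_r)}\lesssim r$ as a separate case, rather than absorbing it into your case (a) through the $\Vert\Ho{u}\Vert_{L^\infty}$ term.
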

When \(p=2\), one may notice that \(|\Delta( (u_1+iu_2)^2)|=8|\Ho{u}|\in L^\infty(\B_1)\), which gives a short proof of the Hölder continuity. Instead we prove directly the general case, for which there is generally no degree \(2\) polynomial \(Q\) such that \(|\Delta (Q\circ u)|\) is controlled by the Hopf differential of \(u\).
\begin{proof}
Assume (by homogeneity) that \(\Vert\Ho{u}\Vert_{L^\infty(\B_1)}\leq 1\), it is sufficient to prove the existence of some \(\eps>0\) such that
\begin{equation}\label{eq_growthgradient}
\forall r\in\left(0,\frac{1}{2}\right),\ \int_{\B_r}|\nabla u|^2\lesssim r^\eps \left(1+\int_{\B_1}|u|^2\right).
\end{equation}
We first prove the following trichotomy: there exists some constant \(C\lesssim 1\) such that for any \(r\in \left(0,\frac{1}{2}\right)\), one of the three following statements holds:
\begin{equation}\label{eq_trichotomie1}
\begin{split}
\text{either }&\int_{\B_{2r}\setminus\B_{r}}|u|^2\leq Cr^2\int_{\B_{2r}\setminus\B_{r}}|\nabla u|^2\\
\text{or }&\Vert u\Vert_{L^\infty(\B_r)}\leq Cr\\
\text{or }&\Delta u= 0\text{ in }\B_r.
\end{split}
\end{equation}
To this end, we consider the quantity
\[A:=\frac{\int_{\B_{2r}\setminus\B_{r}}|u|^2}{r^2\int_{\B_{2r}\setminus\B_r}|\nabla u|^2}\]
and prove that if \(A\) is sufficiently large, then either the second or the third condition holds.

There exists some \(\rho\in \left(r,2r\right)\) such that
\[\int_{\partial\B_\rho}|u|^2\geq Ar^2\int_{\partial\B_\rho}|\nabla u|^2.\]
Let \(m=\fint_{\partial\B_\rho}u\). Up to replacing \(u\) with \(Ru\) for an appropriate rotation \(R\), we assume that \(m=|m| e_1\). We have
\[\Vert u-m\Vert_{L^\infty(\partial\B_\rho)}\lesssim r^\frac{1}{2}\Vert \nabla u\Vert_{L^2(\partial\B_\rho)}\lesssim A^{-\frac{1}{2}}r^{-\frac{1}{2}}\Vert u\Vert_{L^2(\partial\B_\rho)}\lesssim A^{-\frac{1}{2}}\left(|m|+\Vert u-m\Vert_{L^\infty(\partial\B_\rho)}\right)\]
so if \(A\) is sufficiently large (which we assume), we have
\[\Vert u-m\Vert_{L^\infty(\partial\B_\rho)}\lesssim A^{-\frac{1}{2}}|m|.\]
Let \(Q(v):=|v|^2-2v_1^2\). For a sufficiently large \(A\), we have
\[\sup_{\partial\B_\rho}Q\circ u\lesssim -|m|^2.\]
Then, using the hypothesis~\eqref{eq_structure} (which implies that \(\Delta\left(u_k^2\right)=2|\nabla u_k|^2\)), we compute
\begin{align*}
\Delta(Q\circ u)&=-\Delta\left(u_1^2\right)+\sum_{k=2}^{p}\Delta\left(u_k^2\right)\\
&=2\left(-|\nabla u_1|^2+\sum_{k=2}^{p}|\nabla u_k|^2\right)\\
&=8\left(-\left|\Ho{u}-\sum_{k=2}^{p}(\partial_z u_k)^2\right|+\sum_{k=2}^{p}|(\partial_z u_k)^2|\right)\\
&\geq -8|\Ho{u}|\text{ by triangle inequality.}
\end{align*}
Since \(\Vert\Ho{u}\Vert_{L^\infty(\B_1)}\leq 1\) we obtain by the maximum principle:
\begin{align*}
\sup_{\B_r}Q\circ u&\leq \sup_{\partial\B_\rho}Q\circ u+C r^2
\end{align*}
for some  \(C\lesssim 1\). Since \(\sup_{\partial\B_\rho}Q\circ u\lesssim -|m|^2\), this leaves two possibilities: either \(|m|\lesssim r\), or \(\sup_{\B_r}Q\circ u<0\). In the first case, we have by subharmonicity of \(|u|^2\):
\[\sup_{\B_r}|u|^2\leq\sup_{\partial\B_\rho}|u|^2\lesssim |m|^2\lesssim r^2.\]
In the second case, we obtain \(\Delta u=0\) in \(\B_r\) by the condition~\eqref{eq_structure_weak}. Let us detail this point: the condition~\eqref{eq_structure_weak} still holds for \(\varphi\in H^1_0(\B_r,\R)\cap L^\infty(\B_r,\R)\) by approximation. Since \(\sum_{j=1}^{p}u_j^2\) is bounded from below in \(\B_r\), then for any \(\psi\in\mathcal{C}^\infty_c(\B_r,\R)\) the function
\[\varphi_k:=\frac{u_k}{\sum_{j=1}^{p}u_j^2}\psi\]
belongs to \(H^1_0(\B_r,\R)\cap L^\infty(\B_r,\R)\). Testing~\eqref{eq_structure_weak} with \(\varphi_k\) and summing with respect to \(k\) gives
\[\forall\psi\in\mathcal{C}^\infty_c(\B_r,\R),\ \forall l=1,\hdots,p,\ \int_{\B_r}\nabla\psi\cdot\nabla u_l=0\]
so \(u\) is harmonic in \(\B_r\). This proves the claim that one of the three options~\eqref{eq_trichotomie1} always holds.\bigbreak

For any \(k=1,\hdots,p\), we have \(\Delta (u_k^2)=2|\nabla u_k|^2\): this relation implies a Caccioppoli inequality
\begin{equation}\label{eq_cacciopoli_criticalpoint}
\forall s\in\left(0,\frac{1}{2}\right],\ \int_{\B_s}|\nabla u|^2\lesssim s^{-2}\int_{\B_{2s}\setminus\B_s}|u|^2.
\end{equation}
So, for any \(r\in\left(0,\frac{1}{2}\right)\), one of the following three relations holds:
\begin{equation*}\label{eq_trichotomie2}
\begin{split}
\text{either }&\int_{\B_{r}}|\nabla u|^2\leq  2^{-\eps}\int_{\B_{2r}}|\nabla u|^2\text{ for some universal }\eps\in (0,1)\\
\text{or }&\int_{\B_{\frac{r}{2}}}|\nabla u|^2\lesssim r^2\\
\text{or }&\sup_{\B_\frac{r}{2}}|\nabla u|^2\lesssim r^{-2}\int_{\B_r}|\nabla u|^2
\end{split}
\end{equation*}
Indeed these correspond to the three possibilities of~\eqref{eq_trichotomie1}, by application of the  Caccioppoli inequality~\eqref{eq_cacciopoli_criticalpoint} to the first two and classical elliptic regularity for harmonic functions on the third one.

These three possibilities imply that for all \(r\in (0,\frac{1}{2})\), we have
\[\int_{\B_r}|\nabla u|^2\lesssim r^{\eps}\left(1+\int_{\B_\frac{1}{2}}|\nabla u|^2\right).\]
By Caccioppoli inequality~\eqref{eq_cacciopoli_criticalpoint} on the right-hand side we obtain the relation~\eqref{eq_growthgradient}, which concludes the proof.
\end{proof}
In what follows we define
\begin{equation*}\label{eq_defS0}
\mathcal{S}=\left\{u\in H^1(\B_1,\R^p):u_k\Delta u_l=0,\ \forall k,l,\ \Vert\Ho{u}\Vert_{L^\infty(\B_1)}\leq 1\text{ and }u(0)=0\right\}.
\end{equation*}
Next we prove that any sufficiently large element of $\mathcal{S}$ must be close to some harmonic conformal map, using the interior Hölder bound.
\begin{lemma}\label{lem_compactness}
Let \((u^{(k)})_{k\in\N}\) be a sequence in \(\mathcal{S}\) such that  \(\Norm{u^{(k)}}\to+\infty\). Let
\[v^{(k)}=\Norm{u^{(k)}}^{-1}u^{(k)},\]
then there exists a subsequence  \((k_i)_{i\in\N}\) such that \(v^{(k_i)}\) converges weakly in \(L^2(\B_1)\) and strongly in \(\mathcal{C}^0_\loc(\B_1)\) to some function \(v\) that is harmonic and conformal, such that \(v(0)=0\) and \(\Norm{v}\leq 1\).
\end{lemma}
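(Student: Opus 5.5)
The plan is to rescale, extract a limit by compactness, and then read off the properties of the limit from the equations that pass to the limit. Write $\lambda_k=\Norm{u^{(k)}}\to+\infty$ and $v^{(k)}=\lambda_k^{-1}u^{(k)}$. Each $v^{(k)}$ satisfies the bilinear structure condition~\eqref{eq_structure_weak}, since that condition is homogeneous of degree two. It also satisfies $\Norm{v^{(k)}}=1$, $v^{(k)}(0)=0$, and
\[\Vert\Ho{v^{(k)}}\Vert_{L^\infty(\B_1)}=\lambda_k^{-2}\Vert\Ho{u^{(k)}}\Vert_{L^\infty(\B_1)}\leq\lambda_k^{-2}\to 0.\]
Since $\B_1$ is bounded, weak $L^2$ compactness gives a subsequence with $v^{(k)}\rightharpoonup v$ in $L^2(\B_1)$, and lower semicontinuity gives $\Norm{v}\leq 1$.

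For the local uniform convergence, apply Lemma~\ref{lem_equicontinuity} on balls $\B_{z,s}\subset\B_1$ after rescaling; the hypotheses are scale invariant up to constants. This gives a uniform $\mathcal{C}^\alpha$ bound
\[\Vert v^{(k)}\Vert_{\mathcal{C}^\alpha(\B_{z,s/2})}\lesssim_s \lambda_k^{-1}+1\]
on every compact subset of $\B_1$. Arzelà--Ascoli and a diagonal argument then give $v^{(k)}\to v$ in $\mathcal{C}^0_\loc(\B_1)$, and the pointwise limit gives $v(0)=0$.

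Next I need local $H^1$ control. The Caccioppoli inequality~\eqref{eq_cacciopoli_criticalpoint}, which came from $\Delta(u_k^2)=2|\nabla u_k|^2$, gives uniform bounds on $\int_{\B_s}|\nabla v^{(k)}|^2$ for every $s<1$. So $v^{(k)}\rightharpoonup v$ weakly in $H^1_\loc$. I would also like strong $H^1_\loc$ convergence, which is needed to pass quadratic quantities to the limit. To get it, test the identity $\int\nabla(\varphi v^{(k)}_l)\cdot\nabla v^{(k)}_l=0$ with $\varphi\in\mathcal{C}^\infty_c$ nonnegative. This gives
\[\int\varphi|\nabla v^{(k)}_l|^2=-\int v^{(k)}_l\nabla\varphi\cdot\nabla v^{(k)}_l,\]
and the right-hand side converges because $v^{(k)}$ converges uniformly and $\nabla v^{(k)}$ converges weakly. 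The limit $v$ also satisfies~\eqref{eq_structure_weak} (strong times weak convergence), so the right-hand side tends to $\int\varphi|\nabla v_l|^2$. Hence the $H^1$ norms converge locally, and the convergence is strong in $H^1_\loc$.

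With strong convergence, conformality follows: $\Ho{v^{(k)}}\to\Ho{v}$ in $L^1_\loc$, while $\Ho{v^{(k)}}\to 0$ uniformly, so $\Ho{v}=0$. This means $|\partial_x v|=|\partial_y v|$ and $\partial_x v\cdot\partial_y v=0$ almost everywhere.

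The main obstacle is harmonicity, because~\eqref{eq_structure_weak} only gives $v_k\Delta v_l=0$, that is, harmonicity on $\{v\neq0\}$. Since $v$ is continuous, $\{v\neq 0\}$ is open and $v$ is harmonic there. On the open set $\{|v|>0\}$, $v$ is a harmonic conformal map. I would argue as follows.

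1. The function $|v|^2$ is continuous and subharmonic on $\B_1$, because $\Delta|v|^2=2|\nabla v|^2\geq0$ weakly, which follows from~\eqref{eq_structure_weak} with $k=l$.
2. Write $\nabla v$ as $\frac12|\nabla v|$ times a $2\times p$ matrix with orthonormal rows. Then $v_k\Delta v_l=0$ together with $\Ho{v}=0$ should force $\Delta v$, a distribution supported on $\{v=0\}$, to vanish.

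The route I would try first for step 2 is through the identity $\partial_{\bar z}\Ho{v}=2\partial_z v\cdot\Delta v/4$. Formally this gives $\partial_z v\cdot\Delta v=0$. Combined with $v\cdot\Delta v=0$, and with the fact that the distribution $\Delta v$ is a measure (by subharmonicity of $\pm v_l+C|v|^2$-type combinations), this should show that the measure $\Delta v$ charges only $\{v=0,\ \nabla v=0\}$. By conformality, $|v|^2$ has zero gradient on that set. The fallback for making this rigorous is to pass to the limit in the stationarity-like identity for the smooth approximations, or to use that a conformal $W^{1,2}$ map whose zero set has measure zero has a removable zero set. If instead $\{v=0\}$ has positive measure, then $\nabla v=0$ a.e. there, and a capacity argument would be needed. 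This removability step is the one I expect to require the most care.
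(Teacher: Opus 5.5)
\textbf{Main gap: harmonicity across the zero set.} The heart of this lemma is showing that \(v\) is harmonic across \(\{v=0\}\), and your proposal does not prove it. Step 2 is a list of heuristics, none of which closes.
\begin{itemize}
\item The identity \(\partial_{\ov z}\Ho{v}=\frac12\partial_z v\cdot\Delta v\) multiplies an \(L^2\) function by a distribution, so it has no rigorous meaning here.
\item The claim that \(\Delta v\) is a measure, via subharmonicity of \(\pm v_l+C|v|^2\), is unsupported: you only control \(\Delta v_l\) off \(\{v=0\}\).
\item Knowing that a measure charges only \(\{v=0,\nabla v=0\}\) does not make it vanish.
\item Lebesgue-null is not the right smallness for removability; a segment is not removable for bounded harmonic functions.
\item The case \(|\{v=0\}|>0\) is left to an unspecified capacity argument.
\end{itemize}
The missing idea is the one the paper uses. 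On the open set \(\{v\neq 0\}\), \(v\) is harmonic and conformal, and \(|v_x|=|v_y|\), \(v_x\perp v_y\) give \((v\cdot v_x)^2+(v\cdot v_y)^2\leq |v|^2|v_x|^2\), hence \(\Delta\log|v|\geq 0\) there. Since \(\log|v|\) is continuous with values in \([-\infty,\infty)\) and equals \(-\infty\) exactly on \(\{v=0\}\), the sub-mean-value inequality holds trivially at those points. So \(\log|v|\) is subharmonic on all of \(\B_1\). If \(v\not\equiv 0\), its \(-\infty\) set \(\{v=0\}\) is therefore polar, in particular Lebesgue-null, which also disposes of your positive-measure case. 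Closed polar sets are removable for bounded harmonic functions, so \(v\) is harmonic in \(\B_1\). Conformality then extends from the dense open set \(\{v\neq0\}\) by continuity.

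\textbf{Secondary gap: the strong \(H^1_\loc\) step.} Your justification ``strong times weak convergence'' is incorrect. In \(\int\nabla(\varphi v_k)\cdot\nabla v_l=\int\varphi\nabla v_k\cdot\nabla v_l+\int v_k\nabla\varphi\cdot\nabla v_l\), the first term is a product of two weakly converging sequences. Indeed, \(v\) satisfying \eqref{eq_structure_weak} is exactly equivalent to the strong convergence you want: passing to the limit only gives \(\Delta(v_l^2)\geq 2|\nabla v_l|^2\), and the defect is what must be ruled out.

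This can be repaired. \(v\) is harmonic on \(\{v\neq 0\}\) as a locally uniform limit of harmonic functions there. Test with \(\varphi v_k\psi_\eps(|v|)\), where \(\psi_\eps=0\) on \([0,\eps]\), \(\psi_\eps=1\) on \([2\eps,\infty)\) and \(|\psi_\eps'|\lesssim\eps^{-1}\). The error term is bounded by \(\int_{\{\eps<|v|<2\eps\}}|\nabla v|^2\to 0\), which yields \eqref{eq_structure_weak} for \(v\). Your computation then gives strong convergence and \(\Ho{v}=0\) a.e.

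The paper sidesteps all of this. It only uses \(\mathcal{C}^2_\loc\) convergence on \(\{v\neq 0\}\), which gives \(\Delta v=0\) and \(\Ho{v}=0\) there, and that is all the \(\log|v|\) argument needs.
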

Here conformal is taken in the sense \(\Ho{v}=0\): this allows critical points. Note that any such function may be written as
\[v(z)=\sum_{n\geq 1}\Re(v_n z^n)\]
for some sequence of complex coefficients \((v_n)_n\) in \(\C^p\) (which are constrained by the conformality condition), and we have
\[\Norm{v}^2=\frac{\pi}{2}\sum_{n\geq 1}\frac{|v_n|^2}{n+1}(\leq 1)\]
\begin{proof}
By Lemma~\ref{lem_equicontinuity}, the sequence
\((v^{(k)})_{k\in\N}\) is locally equicontinuous. By the Arzelà--Ascoli theorem we may assume (up to the extraction of some sub-sequence) that it converges in \(\mathcal{C}^0_\loc(\B_1)\) to its weak-\(L^2(\B_1)\) limit \(v\).

Since the functions \(v^{(k)}\) are harmonic in \(\{v^{(k)}\neq 0\}\), this implies by elliptic regularity that the  convergence of \(v^{(k)}\) to \(v\) is strong in \(\mathcal{C}^2_\loc(\B_1\cap\{v\neq 0\})\).

As a consequence \(v\) satisfies
\[\Delta v=0,\ \Ho{v}=0\text{ in }\{v\neq 0\}.\]
If \(v\) is identically zero there is nothing to prove, so we assume from now on that  \(v\) is not identically zero.

Consider the function \(\log|v|\) with values in \([-\infty,+\infty)\). By continuity of \(v\), \(\log|v|\) is continuous. In the open set  \(\{v\neq 0\}\) we have
\begin{align*}
\Delta \log|v|&=\frac{v\cdot\Delta v}{|v|^2}+\frac{|v_x|^2+|v_y|^2}{|v|^2}-2\frac{(v\cdot v_x)^2+(v\cdot v_y)^2}{|v|^4}
\end{align*}
We replace \(\Delta v=0\), \(|v_y|^2=|v_x|^2\) and \((v\cdot v_x)^2+(v\cdot v_y)^2\leq |v|^2|v_x|^2\) (using the conformality of \(v\)), so \(\Delta \log|v|\geq 0\) in \(\{v\neq 0\}\). \(\log|v|\) is subharmonic in \(\B_1\setminus\{v=0\}\) and takes the value \(-\infty\) in \(\{v=0\}\), so it is subharmonic in \(\B_1\).

By~\cite[Th. 3.5.1]{Ransford95} the set \(\{v=0\}\) is polar. Using the result~\cite[Cor. 3.6.2]{Ransford95} on removable singularities, \(v\) is harmonic in \(\B_1\).

Since \(\Ho{v}=0\) in \(\{v\neq 0\}\) and \(\Ho{v}\) is continuous, we have  \(\Ho{v}=0\) everywhere: this concludes the proof.
\end{proof}
%Note: assuming we only know \(\Ho{u}\in L^q\) for some \(q>1\), I think this gives an alternative proof (with the stronger hypothesis \(\Delta(u^2)=2|\nabla u|^2\)) of Lipschitz continuity in the scalar (\(p=1\)) case. In this case the only harmonic conformal map \(\B_1\to \R\) vanishing at the origin is zero, so we get automatically that for any \(u\in\mathcal{S}\) with sufficiently large norm, we have
%\[\Norm{u_\frac{1}{2}}\leq \frac{1}{2}\Norm{u}.\]
%which implies that \(\Norm{u_r}\lesssim 1+\Norm{u}\) for all \(r\in (0,1)\).

\begin{definition}
For any \(u\in L^2(\B_1,\R^p)\setminus\{0\}\), we write
\[\delta(u)=\inf_{c\in \mathcal{C}}\frac{\Normr{ u-c}}{\Norm{u}}.\]
where \(\mathcal{C}\) is the set of linear, conformal maps \(\R^2\to\R^p\) i.e. functions of the form $z\mapsto \Re(c_1z)$ where  $c_1\in\C^p$ satisfies $c_1\cdot c_1=0$.
For any \(r\in (0,1)\), we write
\[u_r(z)=r^{-1}u(rz)\]
\end{definition}
We prove that any sufficiently large element of $\mathcal{S}$, which is almost linear in the annulus $\B_1\setminus\B_{\frac{1}{2}}$, is also almost linear in $\B_\frac{1}{2}$.
\begin{lemma}\label{lem_nonvanish}
For any \(\eta>0\), there exists \(M_0(\eta),\delta_0(\eta)>0\) such that the following holds. Let \(u\in\mathcal{S}\) be such that \(\Norm{u}\geq M_0(\eta)\) and \(\delta(u)\leq\delta_0(\eta)\). Let \(c\in\mathcal{C}\) such that \(\delta(u)\) is reached for the map \(c\). Then
\[\Vert u-c\Vert_{L^\infty(\B_\frac{1}{2})}\leq\eta\Norm{u}.\]
In particular, there exists a universal constant \(b>0\) such that \(u\) does not vanish in \(\B_\frac{1}{2}\setminus \B_{b\eta}\).
\end{lemma}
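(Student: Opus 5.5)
We argue by contradiction and compactness, using Lemma~\ref{lem_compactness}. Fix \(\eta>0\) and suppose no \(M_0,\delta_0\) work. Then there is a sequence \(u^{(k)}\in\mathcal{S}\) with \(\Norm{u^{(k)}}\to+\infty\) and \(\delta(u^{(k)})\to 0\). Let \(c^{(k)}\in\mathcal{C}\) be optimal maps for \(\delta(u^{(k)})\), and suppose \(\Vert u^{(k)}-c^{(k)}\Vert_{L^\infty(\B_{1/2})}>\eta\Norm{u^{(k)}}\).

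Normalize by \(v^{(k)}=u^{(k)}/\Norm{u^{(k)}}\) and \(\tilde c^{(k)}=c^{(k)}/\Norm{u^{(k)}}\). By optimality, testing with \(c=0\),
\[\Normr{\tilde c^{(k)}}\leq 2\Normr{v^{(k)}}\leq 2.\]
Hence \(\tilde c^{(k)}\) lies in a bounded subset of the finite-dimensional closed cone \(\mathcal{C}\), and a subsequence converges to some \(c\in\mathcal{C}\).

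By Lemma~\ref{lem_compactness}, a further subsequence gives \(v^{(k)}\to v\) in \(\mathcal{C}^0_\loc(\B_1)\) and weakly in \(L^2(\B_1)\), with \(v\) harmonic and conformal. Since \(\Normr{v^{(k)}-\tilde c^{(k)}}=\delta(u^{(k)})\to 0\) and \(\tilde c^{(k)}\to c\) strongly, we get \(v^{(k)}\to c\) strongly in \(L^2(\B_1\setminus\B_{1/2})\). Weak limits are unique, so \(v=c\) on the annulus. Both \(v\) and \(c\) are harmonic in \(\B_1\), so unique continuation gives \(v=c\) on all of \(\B_1\).

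Now \(\ov{\B_{1/2}}\) is a compact subset of \(\B_1\), so locally uniform convergence yields
\[\Vert v^{(k)}-\tilde c^{(k)}\Vert_{L^\infty(\B_{1/2})}\leq \Vert v^{(k)}-v\Vert_{L^\infty(\B_{1/2})}+\Vert c-\tilde c^{(k)}\Vert_{L^\infty(\B_{1/2})}\to 0.\]
This contradicts the assumed lower bound \(\eta\). So \(M_0(\eta),\delta_0(\eta)\) exist.

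\textbf{Main obstacle: the non-vanishing claim.} Here we need a lower bound \(|c(z)|\geq a|z|\Norm{u}\) with a universal \(a>0\). Write \(c(z)=\Re(c_1z)\) with \(c_1=\alpha+\iu\beta\) and \(c_1\cdot c_1=0\). Then \(|\alpha|=|\beta|\) and \(\alpha\perp\beta\), so \(|c(z)|=|z|\,|c_1|/\sqrt2\) is an exact similarity.

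It remains to bound \(|c_1|\) from below in terms of \(\Norm{u}\). We have \(\Normr{u-c}\leq\delta_0\Norm{u}\), so \(\Normr{c}\geq\Normr{u}-\delta_0\Norm{u}\). This requires \(\Normr{u}\gtrsim\Norm{u}\), i.e. the annulus must carry a definite fraction of the \(L^2\) mass. I would derive this from the first part, applied with a small fixed \(\eta\) and valid once \(\delta_0\leq 1\). On \(\B_{1/2}\) we have \(|u|\leq |c|+\eta\Norm{u}\), so
\[\Vert u\Vert_{L^2(\B_{1/2})}\lesssim |c_1|+\eta\Norm{u}.\]
Also \(\Normr{u}\leq \Normr{c}+\delta_0\Norm{u}\lesssim |c_1|+\delta_0\Norm{u}\). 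Adding the two bounds gives
\[\Norm{u}\lesssim |c_1|+(\eta+\delta_0)\Norm{u}.\]
For small \(\eta,\delta_0\) this yields \(|c_1|\gtrsim\Norm{u}\). Shrinking \(\delta_0(\eta)\) and replacing \(\eta\) by \(\min(\eta,\eta_*)\) for a universal \(\eta_*\) preserves the first conclusion. Then for \(|z|\geq b\eta\) with \(b\) chosen so that \(|c(z)|\geq a b\eta\Norm{u}>\eta\Norm{u}\), we get \(|u(z)|\geq |c(z)|-\eta\Norm{u}>0\) on \(\B_{1/2}\setminus\B_{b\eta}\).
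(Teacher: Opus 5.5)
Your proof is correct and follows the paper's argument. You argue by contradiction via Lemma~\ref{lem_compactness}, identify the limit \(v\) with a linear conformal map on the annulus, propagate this to \(\B_1\) by harmonicity, and then bound \(|c(z)|\gtrsim |z|\Norm{u}\) from below using \(\Norm{c}\gtrsim\Norm{u}\); along the way you make explicit one step the paper leaves implicit, namely the compactness of the normalized maps \(c^{(k)}/\Norm{u^{(k)}}\), which you obtain by testing the optimality of \(c^{(k)}\) against \(c=0\).
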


\begin{proof}
Assume this is not true, so there exists some sequence \((u^{(k)})_{k\in\N^*}\) in \(\mathcal{S}\) such that \(\Norm{u^{(k)}}\to +\infty\), \(\delta(u^{(k)})\to 0\) (we denote \(c^{(k)}\) the associated linear conformal map), and the conclusion does not hold. Let
\[v^{(k)}=\Norm{u^{(k)}}^{-1}u^{(k)},\ d^{(k)}=\Norm{u^{(k)}}^{-1}c^{(k)}.\]
By Lemma~\ref{lem_compactness}, we may assume (up to extracting some sub-sequence) that \(v^{(k)}\) converges locally uniformly to some harmonic, conformal limit \(v\) such that
\[\Norm{v}\leq 1,\ v(0)=0.\]
Since \(\Normr{v^{(k)}-d^{(k)}}\to 0\), then \(d^{(k)}\) converges to \(v\) in \(\B_1\setminus \B_{\frac{1}{2}}\). Thus \(v\) is \(1\)-homogeneous in \(\B_1\setminus\B_\frac{1}{2}\). By harmonicity of \(v\), \(v\) is \(1\)-homogeneous in \(\B_1\) and
\[\Vert v^{(k)}-d^{(k)}\Vert_{\mathcal{C}^0(\B_\frac{1}{2})}\underset{k\to +\infty}{\longrightarrow}0.\]
This is a contradiction with our original assumption, which concludes the proof of the first point. The second point is a consequence of the following observation: for any \(u\in\mathcal{S}\) satisfying \(\Norm{u}\geq M_0(\eta)\), \(\delta(u)\leq \delta_0(\eta)\), for any \(z\in\B_\frac{1}{2}\), we have
\begin{align*}
|u(z)|&\geq |c(z)|-\Vert u-c\Vert_{L^\infty(\B_\frac{1}{2})}\\
&=\sqrt{\frac{2}{\pi}} \Norm{c}|z|-\eta \Norm{u}
\end{align*}
Since \(\Norm{c}\geq  \Norm{u}-\Norm{u-c}\geq \frac{1}{2}\Norm{u}\) (under the assumption that \(\eta\) and \(\delta(u)\) are chosen sufficiently small since otherwise the second conclusion is vacuously true), this concludes the proof.
\end{proof}

Next we prove that any element of $\mathcal{S}$ that is sufficiently large must satisfy a mild decay estimate.

\begin{lemma}\label{lem_firstdecay}
For all \(\eps\in (0,1)\), there exists \(M_1(\eps)>0\) such that the following holds. Let \(u\in \mathcal{S}\) be such that \(\Norm{u}\geq M_1(\eps)\), then
\[\Normr{u_{\frac{1}{2}}}\leq \left(1+\eps\right)\Normr{u}.\]
\end{lemma}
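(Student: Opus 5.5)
We argue by contradiction and compactness, in the same spirit as the proof of Lemma~\ref{lem_nonvanish}. Suppose that for some \(\eps\in(0,1)\) there is a sequence \(u^{(k)}\in\mathcal{S}\) with \(\Norm{u^{(k)}}\to+\infty\) and
\[\Normr{u^{(k)}_{\frac{1}{2}}}>(1+\eps)\Normr{u^{(k)}}.\]
We normalize by setting \(v^{(k)}=\Norm{u^{(k)}}^{-1}u^{(k)}\); the inequality is homogeneous, so it persists for \(v^{(k)}\). By Lemma~\ref{lem_compactness}, up to a subsequence, \(v^{(k)}\to v\) locally uniformly in \(\B_1\), where \(v=\sum_{n\ge1}\Re(v_nz^n)\) is harmonic and conformal with \(v(0)=0\). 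Note that \(u_{\frac12}(z)=2u(z/2)\), so \(\Normr{u_{1/2}}^2\) is \(4\) times \(\int_{\B_1\setminus\B_{1/2}}|u(z/2)|^2\,dz=4\int_{\B_{1/2}\setminus\B_{1/4}}|u|^2\). This quantity involves only values of \(u\) in \(\B_{1/2}\), where the convergence is uniform. Hence
\[\Normr{v^{(k)}_{1/2}}\to\Normr{v_{1/2}}.\]

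The delicate point is the right-hand side, since \(\Normr{v^{(k)}}\) involves the annulus up to \(|z|=1\), where we only have weak \(L^2\) convergence. Weak lower semicontinuity gives only the inequality \(\liminf\Normr{v^{(k)}}\ge\Normr{v}\), but this is exactly the direction we need. Passing to the limit yields
\[\Normr{v_{1/2}}\ge(1+\eps)\Normr{v}.\]

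We then compute this for the limit using the Laurent/Taylor decomposition. Orthogonality of the modes \(\Re(v_nz^n)\) on annuli gives
\[\Normr{v}^2=\tfrac{\pi}{2}\sum_{n}|v_n|^2\frac{1-2^{-2n-2}}{2n+2}.\]
Since \(v_{1/2}=\sum_n 2^{1-n}\Re(v_nz^n)\) carries factor \(2^{2-2n}\le1\) for every \(n\ge1\), we get \(\Normr{v_{1/2}}\le\Normr{v}\). Combined with the previous inequality, this forces \(\Normr{v}=0\), hence \(v\equiv0\) by analyticity.

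So the main obstacle is the degenerate case \(v\equiv0\), in which the contradiction above is empty because the mass of \(v^{(k)}\) escapes toward \(\partial\B_1\) or concentrates. To handle it, we first use \(\Norm{v^{(k)}}=1\). Then \(\Vert v^{(k)}\Vert_{L^2(\B_{1/2})}\to0\) by uniform convergence, so \(\Normr{v^{(k)}}\to\) full mass \(1\). On the other side, \(\Normr{v^{(k)}_{1/2}}\to0\) by uniform convergence to \(0\) on \(\B_{1/2}\). The assumed inequality then reads
\[o(1)>(1+\eps)(1-o(1)),\]
which is impossible. Thus both cases lead to a contradiction, and we fix \(M_1(\eps)\) accordingly. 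The step to double-check is the weak-lower-semicontinuity direction and the identification \(\Normr{u_{1/2}}^2=16\int_{\B_{1/2}\setminus\B_{1/4}}|u|^2\), which keeps that term inside the region of uniform convergence.
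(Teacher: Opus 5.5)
Your proof is correct and follows the paper's argument: you normalize, extract a harmonic conformal limit via Lemma~\ref{lem_compactness}, pass to the limit using uniform convergence on \(\B_{\frac{1}{2}}\) for \(\Normr{v^{(k)}_{\frac12}}\) and weak lower semicontinuity for \(\Normr{v^{(k)}}\), and deduce \(v\equiv 0\) from the expansion \(v=\sum_{n\geq1}\Re(v_nz^n)\); your explicit treatment of the case \(v\equiv 0\), where the mass escapes to the annulus, spells out what the paper compresses into ``contradiction with our initial assumption''. The only slip is a harmless constant, since \(\Normr{v}^2=\frac{\pi}{2}\sum_{n\geq1}|v_n|^2\frac{1-4^{-n-1}}{n+1}\) (you lost a factor \(2\)), and this does not affect the comparison \(\Normr{v_{\frac12}}\leq\Normr{v}\).
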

\begin{proof}
Consider some sequence of functions \(u^{(k)}\in \mathcal{S}\) such that \(\Norm{u^{(k)}}\to +\infty\) and \(u^{(k)}\) does not satisfy the conclusion. By Lemma~\ref{lem_compactness}, the sequence  \(v^{(k)}=\Norm{u^{(k)}}^{-1}u^{(k)}\) converges (up to extracting some subsequence) weakly in \(L^2(\B_1)\), strongly in \(\mathcal{C}^0_\loc(\B_1)\), to some harmonic conformal limit \(v\) such that \(v(0)=0\) and \(\Norm{v}\leq 1\). We write
\[v(z)=\sum_{n\geq 1}\Re(v_n z^n)\]
for some sequence of coefficients \(v_n\in\C^p\). Then
\begin{align*}
\Normr{v}^2&=\frac{\pi}{2}\sum_{n\geq 1}\frac{(1-4^{-n-1})|v_n|^2}{n+1}\\
\Normr{v_\frac{1}{2}}^2&=\frac{\pi}{2}\sum_{n\geq 1}\frac{(1-4^{-n-1})|v_n|^2}{n+1}4^{1-n}
\end{align*}
Thus $\Normr{v_\frac{1}{2}}\leq \Normr{v}$. However,
\[\Normr{v^{(k)}_{\frac{1}{2}}}\geq (1+\eps)\Normr{v^{(k)}}\]
by assumption, which implies (by weak $L^2(\B_1)$ convergence and strong local convergence) $\Normr{v_\frac{1}{2}}\geq(1+\eps) \Normr{v}$. This implies $v=0$, which is in contradiction with our initial assumption.
\end{proof}

Here we give an improvement of the  previous lemma for sufficiently large elements of $\mathcal{S}$ that are bounded away from linear conformal maps.
\begin{lemma}\label{lem_seconddecay}
For all \(\delta\in \left(0,1\right)\), there exists \(M_2(\delta),\eps_2(\delta)>0\) such that the following holds. Let \(u\in \mathcal{S}\) be such that \(\Norm{u}\geq M_2(\delta)\), \(\delta(u)\geq\delta\), then
\[\Norm{u_{\frac{1}{2}}}\leq \left(1-\eps_2(\delta)\right)\Norm{u}.\]
\end{lemma}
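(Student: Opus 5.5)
We argue by contradiction and compactness, exactly as in Lemma~\ref{lem_firstdecay}. Fix \(\delta\in(0,1)\) and suppose the statement fails. Then there are a sequence \(\eps_k\to 0\) and functions \(u^{(k)}\in\mathcal{S}\) such that
\[\Norm{u^{(k)}}\to+\infty,\qquad \delta(u^{(k)})\geq \delta,\qquad \Norm{u^{(k)}_{\frac12}}> (1-\eps_k)\Norm{u^{(k)}}.\]
Set \(v^{(k)}=\Norm{u^{(k)}}^{-1}u^{(k)}\). By Lemma~\ref{lem_compactness}, up to a subsequence, \(v^{(k)}\) converges weakly in \(L^2(\B_1)\) and in \(\mathcal{C}^0_\loc(\B_1)\) to a harmonic conformal map \(v(z)=\sum_{n\ge1}\Re(v_nz^n)\) with \(\Norm{v}\le 1\). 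Note that \(v^{(k)}_{\frac12}(z)=2v^{(k)}(z/2)\) only involves the values of \(v^{(k)}\) on \(\B_{\frac12}\). There the convergence is uniform, so \(\Norm{v^{(k)}_{\frac12}}\to\Norm{v_{\frac12}}\). Passing to the limit therefore gives \(\Norm{v_{\frac12}}\geq 1\).

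Next we compute \(\Norm{v_{\frac12}}\) from the series, using the formula recalled after Lemma~\ref{lem_compactness}. Since \(v_{\frac12}(z)=\sum_n 2^{1-n}\Re(v_nz^n)\),
\[\Norm{v_{\frac12}}^2=\frac{\pi}{2}\sum_{n\ge1}\frac{4^{1-n}|v_n|^2}{n+1}\leq \Norm{v}^2\leq 1 .\]
Combined with \(\Norm{v_{\frac12}}\ge1\), all these inequalities are equalities. Hence \(v_n=0\) for every \(n\ge2\), so \(v(z)=\Re(v_1z)\), and \(\Norm{v}=1\). Conformality of \(v\) gives \(v_1\cdot v_1=0\), so \(v\in\mathcal{C}\).

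It remains to contradict \(\delta(u^{(k)})\ge\delta\). Homogeneity gives \(\delta(u^{(k)})=\delta(v^{(k)})\le\Normr{v^{(k)}-v}/\Norm{v^{(k)}}=\Normr{v^{(k)}-v}\). The annulus \(\B_1\setminus\B_{\frac12}\) reaches \(\partial\B_1\), so weak convergence alone does not give \(\Normr{v^{(k)}-v}\to0\). This is the main obstacle: we must upgrade to strong \(L^2(\B_1)\) convergence.

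The tool is the equality of norms. Weak lower semicontinuity already gives \(\Norm{v}\le\liminf\Norm{v^{(k)}}=1\), and we have shown \(\Norm{v}=1\). Weak convergence together with convergence of the norms in the Hilbert space \(L^2(\B_1)\) yields \(v^{(k)}\to v\) strongly in \(L^2(\B_1)\). In particular \(\Normr{v^{(k)}-v}\to0\), so \(\delta(u^{(k)})\to 0\), contradicting \(\delta(u^{(k)})\ge\delta>0\).

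The constants \(M_2(\delta)\) and \(\eps_2(\delta)\) are then obtained by contradiction. They depend only on \(\delta\), and also on \(p\) through Lemma~\ref{lem_compactness}.
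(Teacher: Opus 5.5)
Your proof is correct and follows the paper's argument essentially step for step. You use compactness via Lemma~\ref{lem_compactness}, then the series comparison forcing $\Norm{v}=\Norm{v_{\frac12}}=1$ and $v$ linear, then the upgrade from weak to strong $L^2(\B_1)$ convergence through convergence of norms, which contradicts $\delta(u^{(k)})\ge\delta$. Your last step, bounding $\delta(v^{(k)})\le\Normr{v^{(k)}-v}$ by testing with $c=v\in\mathcal{C}$, is a slightly more direct version of the paper's $\delta(v)=\lim_k\delta(v^{(k)})$.
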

\begin{proof}
Consider some sequence of functions \(u^{(k)}\in \mathcal{S}\) such that
\[\Norm{u^{(k)}}\to +\infty,\ \liminf_{k\to \infty}\frac{\Norm{u_\frac{1}{2}^{(k)}}}{\Norm{u^{(k)}}}\geq  1,\ \delta(u^{(k)})\geq \delta.\]
By Lemma~\ref{lem_compactness}, the sequence  \(v^{(k)}=\Norm{u^{(k)}}^{-1}u^{(k)}\) converges (up to extracting some subsequence) weakly in \(L^2(\B_1)\), strongly in \(\mathcal{C}^0_\loc(\B_1)\), to some harmonic conformal limit \(v\) such that \(v(0)=0\) and \(\Norm{v}\leq 1\). We write
\[v(z)=\sum_{n\geq 1}\Re(v_n z^n)\]
for some sequence of coefficients \(v_n\in\C^p\), then
\begin{align*}
\Norm{v}^2&=\frac{\pi}{2}\sum_{n\geq 1}\frac{|v_n|^2}{n+1}\\
\Norm{v_\frac{1}{2}}^2&=\frac{\pi}{2}\sum_{n\geq 1}\frac{|v_n|^2}{n+1}4^{1-n}
\end{align*}
We deduce
\[1\geq \Norm{v}\geq\Norm{v_\frac{1}{2}}=\lim_{k\to +\infty}\Norm{v^{(k)}_\frac{1}{2}}\geq 1.\]
By the convergence of the $L^2(\B_1)$ norm, the convergence of \(v^{(k)}\) to \(v\) is strong in \(L^2(\B_1)\). This implies \(\delta(v)=\lim_{k\to +\infty}\delta(v^{(k)})\geq \delta\), at the same time the chain of inequalities above implies that \(v\) is linear, so \(\delta(v)=0\). This is a contradiction, which proves the result.
\end{proof}

We now list several elementary computations on harmonic functions, based on their Laurent decomposition.
\begin{lemma}\label{lem_decompisition}
Let \(u\in H^1(\B_{\rho_2}\setminus\B_{\rho_1},\R^p)\) for some \(\rho_1<\rho_2\) such that \(\Delta u=0\) in \(\B_{\rho_2}\setminus\B_{\rho_1}\). Then \(u\) may be decomposed as an infinite sum
\[u(re^{i\theta})=\sum_{n\in\Z}u_n(r)e^{i n \theta}\]
for \(r\in (\rho_1,\rho_2)\), where \(u_n(r):=\fint_{0}^{2\pi}u(re^{i\theta})e^{-in\theta}d\theta\) is of the form
\[u_n(r)=a_n r^{|n|}+b_n r^{-|n|}\text{ for }n\neq 0,\ u_0(r)=a_0+b_0\log(r)\]
for some sequence  of coefficients \((a_n)_{n\in\Z}\), \((b_n)_{n\in\Z}\) with values in \(\C^p\). Moreover, for any \(r>0\) such that \(\left[\frac{r}{2},r\right]\subset[\rho_1,\rho_2]\), we have
\[\Normr{u_r}^2=Q_0(a_0+\log(r)b_0,b_0)r^{-2}+\sum_{n\neq 0}Q_n(a_n r^{|n|-1},b_n r^{-|n|-1})\]
where \((Q_n)\) is a sequence of quadratic forms such that
\[\frac{|x|^2+4^{|n|} |y|^2}{|n|+1}\lesssim Q_n(x,y)\lesssim \frac{|x|^2+4^{|n|} |y|^2}{|n|+1}.\]
\end{lemma}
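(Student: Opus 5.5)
The plan is to reduce the statement to separation of variables in polar coordinates, followed by an orthogonality computation on the annulus. First, since \(u\in H^1\) of the annulus, for a.e. \(r\in(\rho_1,\rho_2)\) the trace \(\theta\mapsto u(re^{i\theta})\) lies in \(L^2(\mathbb{S}^1)\), so the Fourier expansion \(u(re^{i\theta})=\sum_n u_n(r)e^{in\theta}\) holds in \(L^2\) with \(u_n\) as defined. By interior elliptic regularity \(u\) is smooth in the open annulus, so each \(u_n\) is smooth on \((\rho_1,\rho_2)\). Testing \(\Delta u=0\) against \(e^{-in\theta}\), or equivalently writing the Laplacian in polar form and integrating by parts in \(\theta\), gives the Euler ODE
\[
u_n''+\tfrac{1}{r}u_n'-\tfrac{n^2}{r^2}u_n=0 .
\]
Its solution space is spanned by \(r^{|n|}\) and \(r^{-|n|}\) for \(n\neq0\), and by \(1\) and \(\log r\) for \(n=0\). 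This yields the coefficients \(a_n,b_n\in\C^p\).

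Next comes the norm identity. Take \(r\) with \([\frac r2,r]\subset[\rho_1,\rho_2]\). Since \(u_r(z)=r^{-1}u(rz)\), Parseval on each circle gives
\[
\Normr{u_r}^2=\sum_n 2\pi\int_{1/2}^1 |u_n(rs)|^2 r^{-2}\,s\,ds .
\]
The modes \(e^{in\theta}\) are mutually orthogonal, so the cross terms between different \(n\) vanish. For \(n\neq0\) we write \(u_n(rs)r^{-1}=x s^{|n|}+y s^{-|n|}\) with \(x=a_nr^{|n|-1}\) and \(y=b_nr^{-|n|-1}\). Define
\[
Q_n(x,y)=2\pi\int_{1/2}^1|xs^{|n|}+ys^{-|n|}|^2s\,ds .
\]
For \(n=0\) we write \(u_0(rs)=(a_0+b_0\log r)+b_0\log s\), which explains the shifted argument \(a_0+\log(r)b_0\) and the factor \(r^{-2}\). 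Each \(Q_n\) is then independent of \(r\), as required.

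The main step is the uniform two-sided bound on \(Q_n\), with constants independent of \(n\). The upper bound follows from \(|x s^{|n|}+ys^{-|n|}|^2\le 2|x|^2s^{2|n|}+2|y|^2s^{-2|n|}\). Integrating against \(s\,ds\) on \([\frac12,1]\) gives terms of order \(\frac{|x|^2}{|n|+1}\) and \(\frac{4^{|n|}|y|^2}{|n|+1}\); for \(|n|=1\), where \(s^{-1}\) appears, the integral is simply a bounded constant. For the lower bound, the two functions \(s^{|n|}\) and \(s^{-|n|}\) concentrate at opposite ends of \([\frac12,1]\). I would restrict the integral to \(s\in[1-\frac{1}{4(|n|+1)},1]\), where \(s^{|n|}\) is comparable to 1 and \(s^{-|n|}\) is bounded by a constant. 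This gives \(\frac{|x|^2}{|n|+1}\) as long as \(|x|\) dominates \(|y|\) by a large fixed factor. Symmetrically, the region near \(s=\frac12\) gives \(\frac{4^{|n|}|y|^2}{|n|+1}\) whenever \(2^{|n|}|y|\) dominates \(2^{-|n|}\)-weighted \(|x|\). Cleanest would be a Gram-matrix argument: the normalized Gram matrix of \(\{\sqrt{|n|+1}\,s^{|n|},\ \sqrt{|n|+1}\,2^{-|n|}s^{-|n|}\}\) has diagonal entries of order 1. Its off-diagonal entry is about \((|n|+1)2^{-|n|}\int_{1/2}^1 s\,ds\), which is small for large \(|n|\). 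So its smallest eigenvalue is bounded below for \(|n|\ge N_0\). For the finitely many \(|n|<N_0\), including \(n=0\) with the pair \(1,\log s\), linear independence gives positivity, with constants depending only on \(N_0\).

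I expect this uniformity in \(n\) to be the only delicate point. The key input is the exponentially small overlap between the growing and decaying modes.
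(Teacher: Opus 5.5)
Your proposal is correct and follows essentially the same route as the paper: the Euler ODE for each Fourier mode, Parseval on the annulus with $Q_n(x,y)=2\pi\int_{1/2}^1|xs^{|n|}+ys^{-|n|}|^2s\,ds$, and uniform bounds because the cross term $\tfrac{3\pi}{2}\Re(x\cdot\ov{y})$ is negligible for large $|n|$ against the diagonal terms $\pi\frac{1-4^{-|n|-1}}{|n|+1}|x|^2+\pi\frac{4^{|n|-1}-1}{|n|-1}|y|^2$. Your normalized Gram-matrix argument is the same computation, and your handling of the finitely many small modes (including $n=0,\pm1$, which the paper's explicit formula excludes) by linear independence makes explicit what the paper leaves implicit.
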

\begin{proof}
The form of \(u_n(r)\) is a direct consequence of the associated differential equation \(u_n''(r)+r^{-1}u_n'(r)-n^2r^{-2} u_n(r)=0\). Then an explicit computation gives, for any \(n\in\Z\setminus \{-1,0,1\}\),
\[Q_n(x,y)=\pi\left(\frac{1-4^{-|n|-1}}{|n|+1}|x|^2+\frac{4^{|n|-1}-1}{|n|-1}|y|^2+\frac{3}{2}\Re(x\ov{y})\right)\]
which concludes the proof, since the third term is negligible compared to the sum of the first two terms as \(n\to \pm\infty\).
\end{proof}

The next lemma is a purely technical point that is used in the proof of Lemma~\ref{lem_analysisunstablemodes}.
\begin{lemma}\label{lem_control_problematic_mode}
There exists \(\delta_3\in (0,1)\) such that for any \(\eps\in (0,1)\), there exists \(M_3(\eps)>0\) such that the following holds.  Let \(u\in\mathcal{S}\) be such that  \(\Norm{u}\geq M_3(\eps)\) and \(\delta(u)\leq \delta_3\). Then \(u\) does not vanish on \(\B_\frac{1}{2}\setminus\B_{\frac{1}{4}}\), and writing \(u_{\pm 1} (r)=a_{\pm 1} r+b_{\pm 1} r^{-1}\) the first Fourier coefficients of \(\theta\mapsto u(re^{i\theta})\) for \(r\in \left(\frac{1}{4},\frac{1}{2}\right)\), we have
\[|b_1|+|b_{-1}|\leq \eps \Norm{u}\]
\end{lemma}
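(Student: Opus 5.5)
The plan is to first get the non-vanishing on the annulus from Lemma~\ref{lem_nonvanish}, and then bound $b_{\pm1}$ by a compactness argument. For the first part, fix $\eta_0>0$ small enough that $b\eta_0<\frac14$, where $b$ is the universal constant of Lemma~\ref{lem_nonvanish}. We then take $\delta_3\le\delta_0(\eta_0)$ and $M_3(\eps)\ge M_0(\eta_0)$. Lemma~\ref{lem_nonvanish} gives that $u$ does not vanish on $\B_\frac12\setminus\B_{b\eta_0}\supset \ov{\B_\frac12}\setminus\B_\frac14$, up to a harmless margin obtained by shrinking $\eta_0$. On this open annulus, testing~\eqref{eq_structure_weak} with $\varphi_k=u_k\psi/|u|^2$, exactly as in the proof of Lemma~\ref{lem_equicontinuity}, shows that $\Delta u=0$ there. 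Hence Lemma~\ref{lem_decompisition} applies on $\B_{\frac12}\setminus\B_{\frac14}$ (in fact on a slightly larger annulus), and the coefficients $a_{\pm1},b_{\pm1}$ are well defined.

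For the bound on $b_{\pm1}$ we argue by contradiction with $\delta_3$ fixed as above. Suppose there are $\eps>0$ and $u^{(k)}\in\mathcal{S}$ with $\Norm{u^{(k)}}\to\infty$, $\delta(u^{(k)})\le\delta_3$, and $|b^{(k)}_1|+|b^{(k)}_{-1}|>\eps\Norm{u^{(k)}}$. Set $v^{(k)}=u^{(k)}/\Norm{u^{(k)}}$. By Lemma~\ref{lem_compactness}, after extraction, $v^{(k)}\to v$ in $\mathcal{C}^0_\loc(\B_1)$, where $v$ is harmonic on all of $\B_1$ and conformal. The Fourier coefficients $u_n(r)=\fint u(re^{i\theta})e^{-in\theta}d\theta$ depend continuously on $u$ in $\mathcal{C}^0(\partial\B_r)$, and on $r\in[\frac14,\frac12]$ we have $a_{\pm1}r+b_{\pm1}r^{-1}=u_{\pm1}(r)$. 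Evaluating at two radii $r=\frac14,\frac12$ recovers $b_{\pm1}$ linearly from $u_{\pm1}(\frac14),u_{\pm1}(\frac12)$. Thus $b^{(k)}_{\pm1}/\Norm{u^{(k)}}$ converges to the corresponding coefficients $b_{\pm1}(v)$ of $v$. Since $v$ is harmonic in the whole ball, its Laurent expansion has no singular modes, so $b_{\pm1}(v)=0$. This contradicts $|b^{(k)}_1|+|b^{(k)}_{-1}|>\eps\Norm{u^{(k)}}$ in the limit.

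The main obstacle is ensuring that the annulus on which $u$ is harmonic is uniform, so that the Laurent coefficients are consistently defined along the sequence; this is what the fixed choice of $\eta_0$ and $\delta_3$ handles. A second point to check is that the local uniform convergence from Lemma~\ref{lem_compactness} suffices on the circles $\partial\B_{1/4}$ and $\partial\B_{1/2}$, which are compactly contained in $\B_1$, so it does. Since $\delta_3$ and $\eta_0$ do not depend on $\eps$, only $M_3(\eps)$ depends on $\eps$, as the statement requires.
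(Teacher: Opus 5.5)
Your proposal is correct and follows the paper's proof: non-vanishing on the annulus comes from Lemma~\ref{lem_nonvanish} with $\delta_3$ chosen independently of $\eps$, and the bound on $b_{\pm1}$ comes from a contradiction argument via Lemma~\ref{lem_compactness}, using that the harmonic limit $v$ has $v_{\pm1}(r)=a_{\pm1}r$, so the normalized $b^{(k)}_{\pm1}$ tend to $0$; you simply spell out the harmonicity on the annulus and the recovery of $b_{\pm1}$ from two radii, which the paper leaves implicit. The only slip is the claimed inclusion $\B_{\frac12}\setminus\B_{b\eta_0}\supset\ov{\B_{\frac12}}\setminus\B_{\frac14}$, which fails on the outer circle and is not fixed by shrinking $\eta_0$, but this is harmless: either evaluate at two interior radii in $(\frac14,\frac12)$, or note that $r\mapsto u_{\pm1}(r)$ is continuous up to $r=\frac12$ because $u$ is continuous.
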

\begin{proof}
We let  \(\delta_3,M>0\) be such that if \(\delta(u)\leq \delta_3\) and \(\Norm{u}\geq M\), then by Lemma~\ref{lem_nonvanish} we have that \(u\) does not vanish in \(\B_{\frac{1}{2}}\setminus\B_{\frac{1}{4}}\).

Let now \(u^{(k)}\) be a sequence of functions such that  \(\delta(u^{(k)})\leq\delta_3\), \(\Norm{u^{(k)}}\to +\infty\), and the conclusion is not satisfied. We let
\[v^{(k)}=\Norm{u^{(k)}}^{-1}u^{(k)}\]
Write \(v_n^{(k)}(r)\) the \(n\)-th Fourier coefficient of \(\theta\mapsto v^{(k)}(r e^{i\theta})\): there exists \((a_{\pm 1}^{(k)},b_{\pm 1}^{(k)})\) such that
\[v_{\pm 1}^{(k)}(r)=a_{\pm 1}^{(k)}r+b_{\pm 1}^{(k)}r^{-1}\]
for all \(r\in \left(\frac{1}{4},\frac{1}{2}\right)\), and we assume that \(|b_{ 1}^{(k)}|+|b_{-1}^{(k)}|\geq\eps\).

By Lemma~\ref{lem_compactness}, there exists some extraction of \((v^{(k)})_{k\in\N}\) that converges in \(\mathcal{C}^0_\loc(\B_1)\) to some harmonic, conformal limit \(v\) such that \(v(0)=0\) and \(\Norm{v}\leq 1\). In particular, \(v_{\pm 1}(r)=a_{\pm 1}r\) for some \(a_{\pm 1}\in\C^p\), so
\[b_{\pm 1}^{(k)}\underset{k\to+\infty}{\longrightarrow} 0\]
Thus for any large enough \(k\) we obtain a contradiction.
\end{proof}
The next lemma is the central point of the  proof, where we deal with almost linear elements of $\mathcal{S}$.
\begin{lemma}\label{lem_analysisunstablemodes}
There exist \(M_4,C_4>0\), \(\delta_4\in (0,1)\) such that the following holds. Let  \(u\in\mathcal{S}\) be such that \(\Norm{u}\geq M_4\) and \(\delta(u)\leq \delta_4\). Then \(u\) does not vanish in \(\B_{\frac{1}{2}}\setminus \{0\}\) and
\[\Norm{u_r}\leq  C_4\Norm{u}\]
for all \(r\in (0,1)\).
\end{lemma}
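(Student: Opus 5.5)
The plan is a continuity/induction argument on the largest annulus $\B_1\setminus\B_\rho$ on which $u$ has no zeros. On that annulus $u$ is harmonic, and the Laurent decomposition of Lemma~\ref{lem_decompisition} applies there. The aim is to show that the singular modes ($b_n$ for $n\neq 0$, and $b_0$) are negligible. Then $u$ stays close to the linear map $c$ all the way down to the origin, and the zero set can only be $\{0\}$.

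\textbf{Starting step.} Fix $\delta_4\le\min(\delta_3,\delta_0(\eta))$ with $\eta$ small, and $M_4\ge\max(M_0,M_1,M_3)$.
\begin{itemize}
\item By Lemmas~\ref{lem_nonvanish} and~\ref{lem_control_problematic_mode}, $u$ has no zeros in $\B_{\frac12}\setminus\B_{b\eta}$, and on $\B_{\frac12}\setminus\B_{\frac14}$ we have $|b_{\pm1}|\le\eps\Norm{u}$.
\item By \eqref{eq_structure_weak}, $u$ is harmonic on the open set where $|u|>0$; this is the same computation as in the proof of Lemma~\ref{lem_equicontinuity}.
\item So let $\rho_*=\inf\{\rho: u\neq 0 \text{ on } \B_{\frac12}\setminus\B_\rho\}$. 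On $\B_{\frac12}\setminus\B_{\rho_*}$, $u=\sum_n (a_nr^{|n|}+b_nr^{-|n|})e^{in\theta}$ with a single set of coefficients.
\end{itemize}

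\textbf{Killing the singular modes.} I want to show that for $r\in(\rho_*,\frac12)$ the sequence $N(r):=\Normr{u_r}$ is controlled by the regular modes $a_{\pm1}$, up to a small error. Use the formula of Lemma~\ref{lem_decompisition}.
\begin{itemize}
\item Modes $|n|\ge2$ with $b_n\neq0$ contribute $4^{|n|}|b_n|^2r^{-2|n|-2}/(|n|+1)$. This grows by a factor $\ge 4^{3}$ per halving of $r$.
\item The modes $n=0$ contribute terms of size $|b_0\log r|r^{-1}$ or $|a_0|r^{-1}$, which also grow geometrically.
\item Lemma~\ref{lem_firstdecay}, applied to $u_r$ at every dyadic scale where $\Norm{u_r}\ge M_1(\eps)$, says $N(r/2)\le(1+\eps)N(r)$. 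This forbids geometric growth.
\end{itemize}
Running this from scale $\frac12$ inward, the singular coefficients must stay tiny compared with $\Norm{u}$. Concretely, I would show by contradiction that $\sum_{n\ne\pm1}|b_n|^2 4^{|n|}$ and $|a_0|,|b_0|$ are $\lesssim\eps\Norm{u}$ times the appropriate power of $\frac12$. Otherwise these terms would eventually dominate and grow faster than $(1+\eps)^k$. For $b_{\pm1}$ I would use Lemma~\ref{lem_control_problematic_mode}: the $r^{-1}$ modes contribute $r^{-2}|b_{\pm1}|^2$ to $N(r)^2$, so they are at most tempered by the same argument combined with the initial smallness.

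Also $u(0)=0$ by $u\in\mathcal S$. The regular modes $a_n r^{|n|}$ with $|n|\ge2$ decay, and $a_0=0$ in the limit. So on the annulus $u_r$ is within $O(\eps)\Norm{u}$ of the linear conformal map $\Re(a_1z)+\ldots$, i.e. $\delta(u_r)\lesssim\eps$ and $\Norm{u_r}\approx\Norm{u}$.

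\textbf{Closing the argument.} Suppose $\rho_*>0$. Then at scale $r\approx 2\rho_*$ we still have $\Norm{u_r}\ge M_4$ and $\delta(u_r)\le\delta_4$. Lemma~\ref{lem_nonvanish} applied to $u_r$ gives nonvanishing on $\B_{r/2}\setminus\B_{rb\eta}$, which pushes below $\rho_*$ and contradicts its definition. Hence $u\ne0$ on $\B_{\frac12}\setminus\{0\}$. The uniform bound $\Norm{u_r}\le C_4\Norm{u}$ then follows from the mode expansion: $\Norm{u_r}^2$ is bounded by the regular part, which is nonincreasing in $r$, plus the controlled singular part. For $r\in(\frac12,1)$ the bound is trivial, with $C_4\sim 2^{2}$.

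\textbf{Main obstacle.} The key difficulty is making the "no geometric growth" argument rigorous for the $b_{\pm1}$ and $\log$ modes, which grow only like $r^{-1}$ and $\log r/r$. Compared with the linear part these are exactly borderline ($r^{-1}$ against the normalisation $u_r=r^{-1}u(r\cdot)$ of a linear term, giving $r^{-2}$ against $1$). Lemma~\ref{lem_firstdecay} with a small $\eps$, iterated until the first scale where a singular term would reach a fixed fraction of the linear one, should suffice. I would carefully choose $\eps$ depending on the constants in Lemma~\ref{lem_decompisition} before choosing $\delta_4,M_4$.
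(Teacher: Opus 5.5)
There is a genuine gap: your closing step needs $\delta(u_r)\le\delta_4$ at $r\approx 2\rho_*$, and nothing in the proposal proves it. Lemma~\ref{lem_firstdecay} can only exclude a non-negligible singular part at a scale $r$ if $u$ is still harmonic on enough dyadic scales \emph{below} $r$ for the growth to register. At the inner edge of your annulus there are no such scales.

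This smallness is not an intermediate step; it is the whole content of the lemma. If $\rho_*>0$, then $u$ vanishes somewhere on $\partial\B_{\rho_*}$. So Lemma~\ref{lem_nonvanish} itself forces $\delta(u_r)>\delta_0(\eta)$ for every $r\in(2\rho_*,\rho_*/(b\eta))$ with $\Norm{u_r}\ge M_0(\eta)$.

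Your mechanism waits until the singular terms \emph{dominate}. That needs about $\log(1/\eps)$ scales of harmonicity below the scale being controlled, so it only controls scales $r\gtrsim\eps^{-C}\rho_*$. Bridging down to $\rho_*$ with Lemma~\ref{lem_nonvanish} then requires $\eta\lesssim\eps^{C}$. But the closing step also needs $\eps\lesssim\delta_4\le\delta_0(\eta)$, where $\delta_0$ is an uncontrolled compactness function, so the choice of parameters is circular.

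The modes $b_{\pm1}$ are a second, independent problem. You control them only at scale $\frac12$. In $u_r$ they carry the factor $r^{-2}$ (they contribute $r^{-4}|b_{\pm1}|^2$ to $\Normr{u_r}^2$), so $|b_{\pm1}|\le\eps\Norm{u}$ says nothing near $\rho_*$. Lemma~\ref{lem_firstdecay} cannot temper them either. They interact with the linear part through a cross term of order $|a_{\pm1}||b_{\pm1}|r^{-2}$, which is linear in $b_{\pm1}$ and of either sign. This term dominates their quadratic contribution until $|b_{\pm1}|r^{-2}\gtrsim\Norm{u}$, and it can mask the growth of every other singular mode.

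The missing idea is the paper's stopping time. Do not stop at a vanishing radius. Instead let $\rho$ be the first scale at which $\Normr{u_s-a_1z-a_{-1}\ov{z}}$ reaches $\frac{\delta}{4}\Norm{u}$, measured against the \emph{fixed} linear part of the Laurent expansion. For $s\in[\rho,\frac12]$ one still has $\delta(u_s)\le\delta$. So Lemmas~\ref{lem_nonvanish} and~\ref{lem_control_problematic_mode} can be applied to $u_s$ at every such scale, in particular at $s=\rho$. This gives two things:
\begin{enumerate}
\item harmonicity on $\B_{\frac12}\setminus\B_{2^{-k-1}\rho}$, i.e.\ on $k$ dyadic scales beyond the stopping scale;
\item the bound $|b_{\pm1}|\rho^{-2}\le 2^{-2k}\delta^2\Norm{u}$, whose quadratic precision neutralizes the cross terms over those $k$ scales.
\end{enumerate}
On these scales the singular part of $\Normr{u_{2^{-\ell}\rho}}^2$ is $\gtrsim(1+\ell)^{-2}4^{\ell}\delta^2\Norm{u}^2$. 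Meanwhile Lemma~\ref{lem_firstdecay} with rate $\delta^2/k$ caps $\Normr{u_{2^{-k}\rho}}^2$ by $e^{2\delta^2}(1+a\delta^2)\Normr{a_1z+a_{-1}\ov{z}}^2$.

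The singular part never has to dominate, because both sides are proportional to $\delta^2$. A universal $k$ therefore gives the contradiction. Only afterwards is $\delta\le\delta_0(b^{-1}2^{-k-1})$ fixed, which removes the circularity. With this mechanism in place, the vanishing radius $\rho_*$ is not needed at all.
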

\begin{proof}
Let \(k\in\N^*\) be some integer that will be fixed (independently of other constants) at the very end of the proof.
Let
\begin{align*}
\delta&=\min\left(\frac{1}{5},\delta_3,\delta_0\left(b^{-1}2^{-k-1}\right)\right)\\
\delta_4&=\min\left(\frac{\delta}{k},\delta_0\left(\frac{\delta}{k}\right)\right)\\
M_4&=2\max\left(M_0\left(b^{-1}2^{-k-1}\right),M_0\left(\frac{\delta}{k}\right),M_1\left(\frac{\delta^2}{k}\right),M_3\left(2^{-2k}\delta^2\right)\right)
\end{align*}
Here \(b\) is the constant from Lemma~\ref{lem_nonvanish}. In other words, for any \(u\in\mathcal{S}\) such that \(\Norm{u}\geq \frac{1}{2}M_4\), we have the following properties:
\begin{align}
&\Normr{u_{\frac{1}{2}}}\leq \left(1+\frac{\delta^2}{k}\right)\Normr{u}\label{eq_aux_decay}\\
&\delta(u)=:\frac{\Normr{u-c}}{\Norm{u}}\leq \delta_4 \Rightarrow \frac{\Vert u-c\Vert_{L^\infty(\B_\frac{1}{2})}}{\Norm{u}}\leq \frac{\delta}{k}\label{eq_aux_controldelta}\\
&\delta(u)\leq \delta \Rightarrow \Delta u|_{\B_\frac{1}{2}\setminus\B_{\frac{1}{2^{k+1}}}}=0\text{ and } |b_1|+|b_{-1}|\leq 2^{-2k}\delta^2\Norm{u}\label{eq_aux_controlfirstmode}
\end{align}
where \(b_{\pm 1}\) are the coefficients in the Fourier decomposition \(u_{\pm 1}(r)=a_{\pm 1}r+b_{\pm 1}r^{-1}\) for \(r\in\left(\frac{1}{2^{k+1}},\frac{1}{2}\right)\).\bigbreak

Consider from now on \(u\in\mathcal{S}\) such that \(\Norm{u}\geq M_4\) and \(\delta(u)\leq\delta_4\). Fix \(c\in\mathcal{C}\) such that \(\Normr{u-c}=\delta(u)\Norm{u}\).
Since \(\delta_4\leq\delta\), by Lemma~\ref{lem_nonvanish} \(u\) does not vanish on \(\B_{\frac{1}{2}}\setminus\B_{\frac{1}{2^{k+1}}}\). We let \((a_n,b_n)\) be the coefficients associated to the Fourier decomposition of \(u\) in this set, meaning that
\[u(r e^{i\theta})=a_0+b_0\log(r)+\sum_{n\neq 0}\left(a_n r^{|n|}+b_n r^{-|n|}\right)e^{in\theta}.\]
We have by~\eqref{eq_aux_controldelta} that
\[\Normr{u_{\frac{1}{2}}-c}\lesssim \frac{\delta}{k}\Norm{u}.\]
By \(L^2(\B_1\setminus\B_\frac{1}{2})\)-orthogonal projection of \(u_{\frac{1}{2}}\) on \(\mathrm{Span}_{\C^p}\left(z,\ov{z},z^{-1},\ov{z}^{-1}\right)\) and since \(c\in\mathrm{Span}_{\C^p}(z,\ov{z})\), we have
\[\Normr{a_1 z+a_{-1}\ov{z}-c}\lesssim \frac{\delta}{k}\Norm{u}.\]
Assuming that \(k\) is sufficiently large, we have
\[\Normr{a_1 z+a_{-1}\ov{z}-c}\leq \frac{\delta}{4}\Norm{u}\text{ and }\Normr{u_{\frac{1}{2}}-a_1 z-a_{-1}\ov{z}}\leq \frac{\delta}{4}\Norm{u}.\]
We let \(\rho\in\left[0,\frac{1}{2}\right)\) be the smallest value such that
\[\Normr{u_{s}-a_1 z-a_{-1}\ov{z}}\leq \frac{\delta}{4}\Norm{u},\ \forall s\in\left(\rho,\frac{1}{2}\right].\]
For every \(s\in\left(\rho,\frac{1}{2}\right]\) we have
\begin{align*}
\Norm{u_s}&\geq \Normr{u_s}\geq \Normr{c}-\delta\Norm{u}\\
&=\sqrt{\frac{15}{16}}\Norm{c}-\delta\Norm{u}\geq \sqrt{\frac{15}{16}}\Norm{u}-2\delta\Norm{u}\\
&\geq \left(\sqrt{\frac{15}{16}}-2\delta\right)\Norm{u}\geq \frac{1}{2}\Norm{u}\text{ since }\delta\leq\frac{1}{5}
\end{align*}
so $\Norm{u_s}\geq \frac{M_4}{2}$, and
\begin{align*}
\Normr{u_s-c}&\leq \Normr{u_s-a_1 z-a_{-1}\ov{z}}+\Normr{a_1 z+a_{-1}\ov{z}-c}\\
&\leq\frac{\delta}{2}\Norm{u}\leq \delta \Norm{u_s}
\end{align*}
so $\delta(u_s)\leq\delta$. By application of ~\eqref{eq_aux_controlfirstmode} to $u_s$, $u$ is harmonic in $\B_\frac{1}{2}\setminus\B_{2^{-k-1}\rho}$ (or $\B_{\frac{1}{2}}\setminus\{0\}$ when $\rho=0$).

Our goal is to prove that \(\rho=0\). Indeed in this case \(u\) is harmonic in \(\B_{\frac{1}{2}}\setminus\{0\}\), which implies (by removal of isolated singularities) the harmonicity of \(u\) in \(\B_\frac{1}{2}\) as well as the estimate on  \(\Norm{u_r}\) by classical elliptic regularity.

We proceed by contradiction. Assume that \(\rho\) is positive, so that
\[\Normr{u_{\rho}-a_1 z-a_{-1}\ov{z}}= \frac{\delta}{4}\Norm{u}.\]
%We have \(\Norm{u_\rho}\geq \frac{M_4}{2}\) and \(\delta(u_\rho)\leq \delta\) from the previous computation, so~\eqref{eq_aux_controlfirstmode} applies.
We estimate the non-singular harmonic modes at the radius \(\frac{1}{2}\):
\begin{align*}
\left(\frac{\delta}{k}\Norm{u}\right)^2&\gtrsim \Normr{u_{\frac{1}{2}}-a_1 z-a_{-1}\ov{z}}^2\\
&=Q_0(a_0+\log(\frac{1}{2})b_0,b_0)2^{2}+Q_1(0,b_1)2^{4}+Q_{-1}(0,b_{-1})2^4\\
&+\sum_{n\neq 0,\pm 1}Q_n(a_n 2^{1-|n|},b_n 2^{1+|n|})\\
&\gtrsim \sum_{n\neq 0,\pm 1}\frac{4^{-(|n|-1)}}{|n|}|a_n|^2.
\end{align*}
Then, at the radius \(\rho\) we can estimate the singular harmonic modes as follows.
\begin{align*}
&\left(\frac{\delta}{4}\Norm{u}\right)^2= \Normr{u_{\rho}-a_1 z-a_{-1}\ov{z}}^2\\
&=Q_0(a_0+\log(\rho)b_0,b_0)\rho^{-2}+Q_1(0,b_1)\rho^{-4}+Q_{-1}(0,b_{-1})\rho^{-4}+\sum_{n\neq 0,\pm 1}Q_n(a_n \rho^{|n|-1},b_n \rho^{-|n|-1})\\
&\lesssim   \sum_{n\neq 0,\pm 1}\frac{\rho^{2(|n|-1)}}{|n|}|a_n|^2+\left(\rho^{-2}\left(|a_0+\log(\rho)b_0|^2+|b_0|^2\right)+\sum_{n\neq 0}\frac{4^{|n|}}{|n|}\rho^{-2(|n|+1)}|b_n|^2\right)\\
&\lesssim \left(\frac{\delta}{k}\Norm{u}\right)^2+\left(\rho^{-2}\left(|a_0+\log(\rho)b_0|^2+|b_0|^2\right)+\sum_{n\neq 0}\frac{4^{|n|}}{|n|}\rho^{-2(|n|+1)}|b_n|^2\right)
\end{align*}
Thus for a sufficiently large \(k\) (which we assume), we have
\[\rho^{-2}\left(|a_0+\log(\rho)b_0|^2+|b_0|^2\right)+\sum_{n\neq 0}\frac{4^{|n|}}{|n|}\rho^{-2(|n|+1)}|b_n|^2\gtrsim\left(\delta\Norm{u}\right)^2.\]
We recall that $u$ is harmonic in $\B_{\frac{1}{2}}\setminus\B_{2^{-k-1}\rho}$, so for any $\ell\in\{0,1,\hdots,k\}$ we have
\begin{align*}
\Normr{u_{2^{-\ell}\rho}-a_1 z-a_{-1}\ov{z}}^2
&=Q_0(a_0+\log\left(2^{-\ell}\rho\right)b_0,b_0)\left(2^{-\ell}\rho\right)^{-2}+Q_1(0,b_1)\left(2^{-\ell}\rho\right)^{-4}\\
&+Q_{-1}(0,b_{-1})\left(2^{-\ell}\rho\right)^{-4}
+\sum_{n\neq 0,\pm 1}Q_n\left(a_n (2^{-\ell}\rho)^{|n|-1},b_n (2^{-\ell}\rho)^{-|n|-1}\right)\\
&\gtrsim (1+\ell)^{-2}2^{2\ell}\rho^{-2}\left(|a_0+\log(\rho)b_0|^2+|b_0|^2\right)+\sum_{n\neq 0}\frac{4^{|n|}}{|n|}\left(2^{-\ell}\rho\right)^{-2(|n|+1)}|b_n|^2\\
&\gtrsim (1+\ell)^{-2}2^{2\ell}(\delta\Norm{u})^2
\end{align*}
By~\eqref{eq_aux_controlfirstmode} we have \(|b_1\rho^{-2}|\leq 2^{-2k}\delta^2\Norm{u_\rho}\lesssim  2^{-2k}\delta^2\Norm{u}\), so for a sufficiently large \(k\) we have
\begin{align*}
&\Normr{u_\rho}^2-\Normr{u_{\rho}-a_1 z-a_{-1}\ov{z}}^2-\Normr{a_1 z+a_{-1}\ov{z}}^2\\
&\lesssim |a_1|\cdot |b_1\rho^{-2}|+|a_{-1}|\cdot |b_{-1}\rho^{-2}|\lesssim 2^{-2k}\delta^2\Norm{u}^2
\end{align*}
and for all $\ell\in\{0,1,2,\hdots,k\}$,
\begin{align*}
&\Normr{u_{2^{-\ell}\rho}}^2-\Normr{u_{2^{-\ell}\rho}-a_1 z-a_{-1}\ov{z}}^2-\Normr{a_1 z+a_{-1}\ov{z}}^2\\
&\gtrsim-|a_1|\cdot \left|b_1(2^{-\ell}\rho)^{-2}\right|-|a_{-1}|\cdot \left|b_{-1}(2^{-\ell}\rho)^{-2}\right|\gtrsim -\delta^2\Norm{u}^2.
\end{align*}
Assuming \(k\) is sufficiently large, and reminding that \(\Norm{u}\lesssim \Normr{a_1 z+a_{-1}\ov{z}}\lesssim \Norm{u}\), we obtain some constants \(a\lesssim 1\), \(a'\gtrsim 1\) such that
\[\Normr{u_\rho}^2\leq  \Normr{a_1 z+a_{-1}\ov{z}}^2\left(1+a\delta^2\right)\]
and for all $\ell\in\{0,1,\hdots,k\}$:
\[\Normr{u_{2^{-\ell}\rho}}^2\geq  \Normr{a_1 z+a_{-1}\ov{z}}^2\left(1+a'(1+\ell)^{-2}2^{2\ell}\delta^2\right)\]
Since $\Normr{a_1z+a_{-1}\ov{z}}\geq \frac{M_4}{2}$ we may apply the growth estimate~\eqref{eq_aux_decay} to \(u_{2^{-\ell}\rho}\) for \(\ell=0,1,\hdots,k-1\), which gives:
\[\Normr{u_{2^{-k}\rho}}\leq \left(1+\frac{\delta^2}{k}\right)^k\Normr{u_\rho}.\]
Using the elementary estimate \(\left(1+\frac{\delta^2}{k}\right)^k\leq e^{\delta^2}\), we obtain
\[1+a'(1+k)^{-2}2^{2k}\delta^2\leq e^{2\delta^2}(1+a\delta^2).\]
For any sufficiently large $k\in\N^*$, this does not hold for any \(\delta\in (0,1)\). Fixing such large enough value for \(k\), we arrive at a contradiction from the fact that \(\rho\) is positive. This concludes the proof.
\end{proof}

\begin{proof}[Proof of Theorem~\ref{th_crit_vectorial}]
Since \(u\) is harmonic in its support, using classical elliptic regularity in the interior of the support it is sufficient to prove the following: assume \(u\in\mathcal{S}\), then
\[\Norm{u_r}\lesssim 1+ \Norm{u}\]
for all \(r\in (0,1)\). Let
\[M=\max(M_4,M_2(\delta_4)),\ \eps=\eps_2(\delta_4)\]
Then for every \(r\in (0,1)\), by Lemmas~\ref{lem_seconddecay} and~\ref{lem_analysisunstablemodes}, one of the three following statements holds:
\begin{align*}
\text{either }& \Norm{u_r}\leq M\text{ or }\Norm{u_{\frac{r}{2}}}\leq (1-\eps)\Norm{u_r}\text{ or }\sup_{0<s<r}\Norm{u_s}\leq C_4\Norm{u_r}.
\end{align*}
Let \(\rho\in [0,1]\) be the smallest radius such that either of the first two conditions holds for all \(r\in (\rho,1)\). When \(\rho=1\), we obtain the result using the third condition. Assume now that \(\rho<1\).

For all \(r\in (\rho,1]\) we have
\[\Norm{u_{\frac{r}{2}}}\leq 4M+(1-\eps)\Norm{u_r}.\]
This implies
\[\Norm{u_r}\lesssim 1+\Norm{u},\forall r\in (\rho,1].\]
When \(\rho=0\) this concludes the proof. When \(\rho>0\) we then have by the third condition, for all \(r\leq \rho\),
\[\Norm{u_r}\leq  C_4\Norm{u_{\rho}}\lesssim 1+\Norm{u}\]
which conclude the proof.
\end{proof}
\subsection{Non-degeneracy for stationary solutions}

It is not possible to prove a non-degeneracy result as strong as in the minimizing case, as is demonstrated by the following example: Let \(u(z)=|\mathrm{Im}(z^{3/2})|\), then \(u\) is a stationary solution of the Bernoulli problem, \(r^{-1}u(r\cdot)\) converges to \(0\) as \(r\to 0\), but \(u\) does not vanish in any neighborhood of the origin.

Moreover, similarly to the Lipschitz regularity for the vector-valued stationary solutions, one may also construct counterexamples to non-degeneracy relying on the fact that $u$ does  not satisfy an equation on its support; for any \(\eps>0\), let \(u(x,y)=\left(\eps -|x|\right)_+\), then \(u\) is a stationary solution of the Bernoulli problem, \(\Vert u\Vert_{H^1(\B_1)}+|\{u\neq 0\}|\) is arbitrarily small for \(\eps\to 0\), and \(u\) does not vanish near the origin.

Thus a non-degeneracy result must involve both the support being small and \(u\) satisfying an equation on its support in the sense of \eqref{eq_structure_weak}, even in the scalar-valued case. In~\cite[Lem. 9.2]{KW25}, such a result is established for their notion of variational solutions, in any dimension. The proof is based on a blow-up argument, deducing information from the blow-up through a monotonicity formula. We state the same result in our two-dimensional case, and give an alternative proof which extends to the vector-valued case and may have independent interest.
\begin{proposition}\label{prop_crit_nondeg}
Let $d=2$, $p\in\N^*$, there exists \(c_p>0\) such that the following holds. Let \(u\in H^1(\B_1,\R^p)\) be a stationary solution of the Bernoulli problem such that $u_k\Delta u_l=0$ for all $k,l\in\{1,\hdots,p\}$, and such that \(u(0)\neq 0\). Then
\[\int_{\B_1}\left(|\nabla u|^2+\chi_{u\neq 0}\right)\geq c_p.\]
\end{proposition}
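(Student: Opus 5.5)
The plan is to run a bootstrap across dyadic scales. Two quantities stay small together: the $L^2$ size of the rescalings $u_r(z)=r^{-1}u(rz)$, and the density $d(r):=r^{-2}|\B_r\cap\{u\neq 0\}|$. If they stay small at every scale we contradict $u(0)\neq 0$. Throughout, $u$ is continuous and harmonic on $\{u\neq 0\}$: Lemma~\ref{lem_hopf_bounded} bounds $\Ho{u}$, and Theorem~\ref{th_crit_vectorial} (or already Lemma~\ref{lem_equicontinuity}) gives continuity. Since $u(0)\neq 0$, continuity gives $d(r)\to\pi$ as $r\to 0$. So it suffices to show that, when $E:=\int_{\B_1}(|\nabla u|^2+\chi_{u\neq 0})$ is small, $d(r)\leq \pi/2$ for all $r\in(0,1)$.

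\textbf{Step 1: decay of $L^2$ norms given small density.} The structure condition~\eqref{eq_structure_weak} gives $\Delta(u_k^2)=2|\nabla u_k|^2$, hence the Caccioppoli inequality~\eqref{eq_cacciopoli_criticalpoint}, which is scale invariant for the family $u_r$. I will combine it with the Sobolev embedding $H^1(\B_1)\hookrightarrow L^q(\B_1)$, $q=4$, and Hölder on the support, exactly as in the proof of Proposition~\ref{prop_nondegeneracy}. The resulting chain is
\[
\Vert u_{r/4}\Vert_{L^2(\B_1)}\lesssim \bigl(d(r/4)\bigr)^{\frac14}\Vert u_{r/4}\Vert_{H^1(\B_1)}\lesssim d(r/4)^{\frac14}\,\Vert u_{r}\Vert_{L^2(\B_1)} .
\]
Hence if $d(r/4)\leq\sigma$, with $\sigma$ a small universal constant, then $\Vert u_{r/4}\Vert_{L^2}\leq \frac12\Vert u_r\Vert_{L^2}$. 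Caccioppoli again gives $\int_{\B_{1/2}}|\nabla u_{r}|^2\lesssim \Vert u_r\Vert_{L^2(\B_1)}^2$. At the starting scale, $\Vert u\Vert_{L^2(\B_1)}\lesssim E^{1/2}$ by the same Hölder--Sobolev absorption, since the support has measure $\leq E$.

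\textbf{Step 2: Pohozaev control of the density.} I will test the stationarity~\eqref{eq:critical_vectorial} with $\xi(x)=x\,\eta(|x|)$ and let $\eta\to\chi_{[0,\rho]}$. In $d=2$ the Dirichlet bulk term cancels, which should yield for a.e.\ $\rho$
\[
\frac{d}{d\rho}\,d(\rho)= -\rho^{-2}\int_{\partial\B_\rho}\bigl(|\partial_\tau u|^2-|\partial_r u|^2\bigr),
\qquad\text{so}\qquad |d(r)-d(1)|\leq \int_r^1\rho^{-2}\int_{\partial\B_\rho}|\nabla u|^2\,d\rho .
\]
Splitting into dyadic shells, the right-hand side is $\lesssim \sum_{j} \int_{\B_{1}\setminus\B_{1/2}}|\nabla u_{2^{-j}}|^2$. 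By Step 1 this is a geometric series bounded by $C\Vert u\Vert_{L^2(\B_1)}^2\lesssim E$, as long as the densities stay below $\sigma$.

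\textbf{Step 3: closing the bootstrap.} I will argue by induction on $k$ over scales $4^{-k}$. Suppose $d(4^{-j})\leq\sigma$ for $j\leq k$. Then Step 1 yields $\Vert u_{4^{-j}}\Vert_{L^2}\leq 2^{-j}CE^{1/2}$. Step 2 then yields $d(\rho)\leq d(1)+CE\leq C'E$ for all $\rho\geq 4^{-k-1}$, which is $\leq\sigma$ once $E\leq c_p$. Hence $d(r)\leq\sigma<\pi$ for every $r$, contradicting $d(r)\to\pi$.

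The main obstacle is Step 2. I need to derive the Pohozaev identity rigorously from~\eqref{eq:critical_vectorial} with only $H^1$ regularity; I will use the a.e.\ $\rho$ argument with $\xi=x\eta_\epsilon(|x|)$, and if needed the Lipschitz bound of Theorem~\ref{th_crit_vectorial}. I also need to check that the cancellation of the bulk Dirichlet term, which is special to $d=2$, indeed leaves only the boundary term $|\partial_\tau u|^2-|\partial_r u|^2$, so that the density variation is controlled by the (decaying) Dirichlet energy and not by the measure itself.
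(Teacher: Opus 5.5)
Your proof is correct, but it controls the density of $\{u\neq 0\}$ by a different mechanism than the paper does.

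\textbf{The paper's route.} The paper uses the full stationarity equation through the Beurling transform. The function $\Psi=\ov{\mathcal{B}}[4\Ho{u}\chi_{\B_1}]-\chi_{u\neq 0}$ is antiholomorphic and small in $L^{1,\infty}(\B_1)$, hence small in $L^\infty(\B_{1/2})$ by Lemma~\ref{lem_L1infty}. This gives the one-scale bound $|\{u\neq 0\}\cap\B_{1/2}|\lesssim\Vert\Ho{u}\Vert_{L^\infty}\int_{\B_1}|\Ho{u}|$ of Lemma~\ref{lem_crit_nondeg_1}. Combined with the same Caccioppoli--Sobolev estimate as your Step 1 (Lemma~\ref{lem_crit_nondeg_2}) and Poincaré, it yields the superlinear recursion $e_{k+1}\lesssim e_k^{3/2}$ for $e_k=r_k^{-2}\int_{\B_{r_k}}(|\nabla u|^2+\chi_{u\neq 0})$, which closes scale by scale.

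\textbf{Your route.} You test only with radial fields $\xi=x\eta(|x|)$. In $d=2$ this gives an exact formula for the derivative of $\rho^{-2}|\B_\rho\cap\{u\neq 0\}|$. The density is then controlled by the cumulative quantity $\int_{\B_1\setminus\B_r}|x|^{-2}|\nabla u|^2$. That is why you need the geometric decay of Step 1 on all scales at once, and hence a bootstrap rather than a one-step recursion.

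\textbf{What each buys.} Your route needs neither the Beurling transform nor the $L^\infty$ bound on $\Ho{u}$ from Lemma~\ref{lem_hopf_bounded}. It only needs $\{u=0\}$ to have density zero at $0$. In dimension $d$ the identity acquires only the extra term $(d-2)\rho^{-d-1}\int_{\B_\rho}|\nabla u|^2$, which the same decay controls. So your argument appears to extend to all dimensions, giving a blow-up-free proof in the spirit of \cite[Lem.~9.2]{KW25}. The paper's route, in exchange, closes with a simpler one-step recursion but is intrinsically two-dimensional.

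\textbf{The obstacle you flag.} It is harmless and needs no Lipschitz bound. Set $m(\rho)=|\B_\rho\cap\{u\neq 0\}|$, which is Lipschitz. Testing with $\xi=x\eta(|x|)$ for $\eta\in\mathcal{C}^\infty_c([0,1))$ constant near $0$ gives
$\rho\int_{\partial\B_\rho}(|\partial_\tau u|^2-|\partial_r u|^2)+\rho m'(\rho)-2m(\rho)=c$ for a.e.\ $\rho$. Averaging over $(0,\epsilon)$ shows $c=0$.

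\textbf{A small imprecision.} In Step 2, the shells near $\partial\B_1$ must be bounded directly by $E$. The Caccioppoli inequality is interior and does not control $\int_{\B_1\setminus\B_{1/2}}|\nabla u|^2$ by $\Vert u\Vert_{L^2(\B_1)}$. This does not affect the conclusion.
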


We obtain Proposition~\ref{prop_crit_nondeg} as a consequence of the following lemmas.

\begin{lemma}\label{lem_crit_nondeg_1}
There exist \(C_4,\eps_0>0\) such that the following holds. Let \(u\in H^1(\B_1,\R^p)\) be a stationary solution of the Bernoulli problem that satisfies
\[\int_{\B_1}\left(|\Ho{u}|+\chi_{u\neq 0}\right)<\eps_0.\]
Then
\[|\{u\neq 0\}\cap\B_{\frac{1}{2}}|\leq C_4\Vert\Ho{u}\Vert_{L^\infty(\B_1)} \int_{\B_1}|\Ho{u}|.\]
\end{lemma}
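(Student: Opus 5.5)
The plan is to work directly with the first-variation equation of Lemma~\ref{lem_equ}, written as $\partial_{\ov{z}}\Ho{u}=\frac14\partial_z\chi_{u\neq 0}$ (because $\chi_{u=0}=1-\chi_{u\neq 0}$), and to combine it with the $L^2$-isometry of the Beurling transform (Lemma~\ref{lem_B}). For a radius $r\in[\frac12,1]$ set $f_r:=\chi_{\B_r\cap\{u\neq 0\}}$ and
\[h_r:=4\Ho{u}-\mathcal{B}[f_r].\]
By Lemmas~\ref{lem_equ} and~\ref{lem_B}, $\partial_{\ov{z}}h_r=0$ in $\B_r$, so $h_r$ is holomorphic in $\B_r$; this is the same construction as in the proof of Lemma~\ref{lem_hopf_bounded}. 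The weak-type bound of Lemma~\ref{lem_B} gives
\[\Vert h_r\Vert_{L^{1,\infty}(\B_r)}\lesssim \int_{\B_1}|\Ho{u}|+|f_r|.\]
Lemma~\ref{lem_L1infty}, applied after rescaling, then gives the pointwise bound
\[\sup_{\B_{r'}}|h_r|\lesssim (r-r')^{-2}\Big(\int_{\B_1}|\Ho{u}|+|f_r|\Big)\qquad\text{for } r'<r.\]

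The measure of the support is read off from the isometry, since $|f_r|=\Vert f_r\Vert_{L^2}^2=\Vert\mathcal{B}f_r\Vert_{L^2(\R^2)}^2$. Inside $\B_{r'}$ we have $\mathcal{B}f_r=4\Ho{u}-h_r$, so
\[\int_{\B_{r'}}|\mathcal{B}f_r|^2\le 32\Vert\Ho{u}\Vert_{L^\infty}\int_{\B_1}|\Ho{u}|+C(r-r')^{-4}\Big(\int_{\B_1}|\Ho{u}|+|f_r|\Big)^2 .\]
The first term is exactly the desired right-hand side. The second term is quadratic in quantities bounded by $\eps_0$, so it is at most $C\eps_0(r-r')^{-4}\big(\int|\Ho{u}|+|f_r|\big)$, and the $\eps_0|f_r|$ part can be absorbed. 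The remaining piece $\eps_0\int|\Ho{u}|$ has to be compared with $\Vert\Ho{u}\Vert_\infty\int|\Ho{u}|$. Here I would use that, if $u\not\equiv 0$ near a point of $\mathcal{F}_u$, a lower bound of the type $\Vert\Ho{u}\Vert_\infty\gtrsim 1$ should follow from the same identity. Alternatively, I would reorganize the argument so that only products $\Vert\Ho{u}\Vert_\infty\cdot\int|\Ho{u}|$ and $\eps_0|f|$ ever appear; for instance, the cross term $\int\Ho{u}\,\ov{h_r}$ is bounded by $\int|\Ho{u}|\,\sup|h_r|$, and this is linear in $\int|\Ho{u}|$.

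The main obstacle is the mass of $\mathcal{B}f_r$ outside $\B_{r'}$, that is $\int_{\R^2\setminus\B_{r'}}|\mathcal{B}f_r|^2$. This is a nonlocal tail, and it is not small a priori. To handle it I would use the duality identity $|f_{r'}|=\int f_{r'}\,f_r=\int \mathcal{B}f_{r'}\,\ov{\mathcal{B}f_r}$ instead of the full isometry. I would then split $\mathcal{B}f_{r'}$ into a near part on $\B_{(r+r')/2}$, where $\mathcal{B}f_r=4\Ho{u}-h_r$ is controlled as above, and a far part. For the far part I would use the kernel decay $|\mathcal{B}f_{r'}(x)|\lesssim |f_{r'}|\,\mathrm{dist}(x,\B_{r'})^{-2}$, which gives a contribution $\lesssim (r-r')^{-2}|f_{r'}|\,|f_r|\le C\eps_0(r-r')^{-2}|f_{r'}|$. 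This is again absorbable.

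This produces a recursive inequality of the form
\[F(r')\le C\Vert\Ho{u}\Vert_\infty\int|\Ho{u}|+C\eps_0(r-r')^{-4}\big(F(r)+\textstyle\int|\Ho{u}|\big),\qquad F(s):=|\{u\neq0\}\cap\B_s|,\]
for $\frac12\le r'<r\le 1$. It can be closed by a standard iteration lemma over radii $r_j\uparrow\frac34$, choosing $\eps_0$ small enough that the factor $C\eps_0(r_{j+1}-r_j)^{-4}$ decays geometrically. The statement with constant $C_4$ then follows.
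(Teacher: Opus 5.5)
\textbf{There is a genuine gap in the closing step.}

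Your reduction is fine as far as it goes. The function $h_r=4\Ho{u}-\mathcal{B}[f_r]$ is holomorphic in $\B_r$, and it is uniformly $O\big((r-r')^{-2}(\int_{\B_1}|\Ho{u}|+|f_r|)\big)$ on $\B_{(r+r')/2}$. The term $\eps_0\int|\Ho{u}|$ that worried you is harmless, because $(\int_{\B_1}|\Ho{u}|)^2\le\pi\Vert\Ho{u}\Vert_{L^\infty(\B_1)}\int_{\B_1}|\Ho{u}|$; the lower bound $\Vert\Ho{u}\Vert_\infty\gtrsim 1$ you appeal to is neither proved nor needed.

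The problem is the closing step. Since you transform $\chi_{u\neq0}$, you only know $\mathcal{B}f_r$ on a smaller ball, up to the error $h_r$. Pairing $\mathcal{B}f_{r'}$ with $h_r$ in $L^2$ costs $|f_{r'}|^{1/2}\Vert h_r\Vert_{L^2}$, which produces a remainder quadratic in the support measure of the larger ball. With a fixed gap, where the far part can indeed be absorbed, your argument therefore only yields
\[|\{u\neq0\}\cap\B_{\frac12}|\lesssim\Vert\Ho{u}\Vert_{L^\infty(\B_1)}\int_{\B_1}|\Ho{u}|+|\{u\neq0\}\cap\B_1|^2 .\]
The iteration meant to remove the last term does not work:
\begin{itemize}
\item The gaps $r_{j+1}-r_j$ are summable, so $C\eps_0(r_{j+1}-r_j)^{-4}\to\infty$ for every fixed $\eps_0$, instead of decaying geometrically.
\item Absorbing the far part also requires the gap to stay bounded below in terms of $\eps_0$.
\item Keeping the genuinely quadratic structure does not help. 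After $k$ steps a remainder $\lesssim_k|\{u\neq0\}\cap\B_1|^{2^k}$ is left, which nothing in the argument bounds by $\Vert\Ho{u}\Vert_\infty\int|\Ho{u}|$. Letting $k$ depend on $u$ instead makes the constant in front of $\Vert\Ho{u}\Vert_\infty\int|\Ho{u}|$ blow up.
\end{itemize}

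\textbf{The missing idea} is to invert in the other direction, so that the smallness of the holomorphic error is used pointwise against $\chi_{u\neq0}$ rather than in $L^2$ against $\mathcal{B}\chi_{u\neq0}$. Let $\ov{\mathcal{B}}=\mathcal{B}^{-1}$, which satisfies $\partial_z\ov{\mathcal{B}}[f]=\partial_{\ov{z}}f$, and set $\Psi:=\ov{\mathcal{B}}[4\Ho{u}\chi_{\B_1}]-\chi_{u\neq0}$. Then $\Psi$ is antiholomorphic in $\B_1$, and $\Vert\Psi\Vert_{L^{1,\infty}(\B_1)}\lesssim\int_{\B_1}(|\Ho{u}|+\chi_{u\neq0})<\eps_0$. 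For $\eps_0$ small, Lemma~\ref{lem_L1infty} gives $\Vert\Psi\Vert_{L^\infty(\B_{1/2})}\le\frac12$. Hence $|\ov{\mathcal{B}}[4\Ho{u}\chi_{\B_1}]|=|1+\Psi|\ge\frac12$ on $\{u\neq0\}\cap\B_{\frac12}$, and the isometry gives, in one line,
\[|\{u\neq0\}\cap\B_{\frac12}|\le4\Vert\ov{\mathcal{B}}[4\Ho{u}\chi_{\B_1}]\Vert_{L^2(\R^2)}^2=64\Vert\Ho{u}\Vert_{L^2(\B_1)}^2\le64\Vert\Ho{u}\Vert_{L^\infty(\B_1)}\int_{\B_1}|\Ho{u}| .\]
There is no tail and no iteration, because the isometry is applied to a function supported in $\B_1$.

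In your duality framework the same effect comes from moving $h_r$ across via the adjoint. With $N=\B_{(r+r')/2}$ one has $\int_N\mathcal{B}f_{r'}\,\ov{h_r}=\int f_{r'}\,\ov{\ov{\mathcal{B}}[h_r\chi_N]}$. The function $\ov{\mathcal{B}}[h_r\chi_N]$ is antiholomorphic in $N$, hence $O(\eps_0)$ uniformly on $\B_{r'}$ for a fixed gap. That term is then $\lesssim\eps_0|f_{r'}|$ and can be absorbed, so a single pair of radii suffices.
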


\begin{proof}
We write \(\ov{\mathcal{B}}:L^2(\R^2)\to L^2(\R^2)\) the operator defined by
\[\widehat{\ov{\mathcal{B}}f}(\xi,\zeta)=\frac{\xi+\iu \zeta}{\xi-\iu \zeta}\widehat{f}(\xi,\zeta)\]
i.e. \(\ov{\mathcal{B}}\) is the inverse of the operator \(\mathcal{B}\) from Definition~\ref{def_B}, which shares the same property as \(\mathcal{B}\) stated in Lemma~\ref{lem_B} with \(\partial_z\ov{\mathcal{B}}[f]=\partial_{\ov{z}}f\) instead. Since \(u\) is a stationary solution of the Bernoulli problem, we have
\[4\partial_{\ov{z}}\Ho{u}-\partial_{z}\chi_{u\neq 0}=0.\]
We let
\[\Psi=\ov{\mathcal{B}}\left[4\Ho{u}\chi_{\B_1}\right]-\chi_{u\neq 0},\]
then \(\partial_{z}\Psi=0\) in \(\B_1\), so \(\Psi\) is antiholomorphic in \(\B_1\). We have
\begin{align*}
\Vert \Psi\Vert_{L^{1,\infty}(\B_{1})}&\lesssim \left\Vert \ov{\mathcal{B}}\left[4\Ho{u}\chi_{\B_1}\right]\right\Vert_{L^{1,\infty}(\B_{1})}+\Vert \chi_{u\neq 0}\Vert_{L^{1,\infty}(\B_{1})}\\
&\lesssim \Vert \Ho{u}\Vert_{L^{1}(\B_{1})}+\Vert \chi_{u\neq 0}\Vert_{L^{1}(\B_{1})}\text{ by Lem. }\ref{lem_B}\\
&\lesssim\int_{\B_1}\left(|\Ho{u}|+\chi_{u\neq 0}\right),
\end{align*}
so by Lemma~\ref{lem_L1infty}, we have
\[
\Vert \Psi\Vert_{L^{\infty}(\B_{\frac{1}{2}})}\lesssim\int_{\B_1}\left(|\Ho{u}|+\chi_{u\neq 0}\right).\]
If the right-hand side is sufficiently small, which we assume, then we have \(\Vert\Psi\Vert_{L^\infty(\B_{\frac{1}{2}})}\leq \frac{1}{2}\), so \(\chi_{\B_{\frac{1}{2}}\cap\{u\neq 0\}}\leq 2|\Psi+\chi_{\{u\neq 0\}}|\). As a consequence,
\begin{align*}
|\{u\neq 0\}\cap\B_{\frac{1}{2}}|^\frac{1}{2}&\leq 2 \Vert \Psi+\chi_{u\neq 0}\Vert_{L^2(\B_\frac{1}{2})}=8\left\Vert \ov{\mathcal{B}}\left[\Ho{u}\chi_{\B_1}\right]\right\Vert_{L^2(\B_\frac{1}{2})}\\
&\lesssim \Vert \Ho{u}\Vert_{L^2(\B_{1})}&\text{ by Lemma }\ref{lem_B}\\
&\leq \Vert \Ho{u}\Vert_{L^\infty(\B_1)}^\frac{1}{2}\Vert \Ho{u}\Vert_{L^1(\B_1)}^\frac{1}{2}.
\end{align*}
\end{proof}

The next lemma is a more classical consequence of the Caccioppoli inequality.
\begin{lemma}\label{lem_crit_nondeg_2}
There exists \(C_5>0\) such that the following holds. Let \(u\in H^1(\B_1,\R^p)\) such that  $u_k\Delta u_l=0$ for all $k,l$. Then
\[\int_{\B_\frac{1}{2}}|\nabla u|^2\leq C_5|\{u\neq 0\}\cap\B_1|^\frac{1}{2}\int_{\B_1}|u|^2.\]
\end{lemma}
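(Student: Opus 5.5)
The plan is to combine the Caccioppoli inequality \eqref{eq_cacciopoli_criticalpoint} with a higher integrability estimate for \(u\), exactly in the spirit of the proof of Proposition~\ref{prop_nondegeneracy}. The only structural input is that \(u_k\Delta u_l=0\) gives \(\Delta(u_k^2)=2|\nabla u_k|^2\) in the weak sense. Hence \(|u|^2\) is subharmonic, and for any cutoff \(\eta\) we have
\[\int\eta^2|\nabla u|^2 = -\int 2\eta\, u\cdot\nabla u\cdot\nabla\eta.\]
This is obtained by testing \eqref{eq_structure_weak} with \(\varphi=\eta^2\) and \(k=l\). Via Young's inequality it yields the standard Caccioppoli bound
\[\int_{\B_s}|\nabla u|^2\lesssim (t-s)^{-2}\int_{\B_t}|u|^2\]
for \(s<t\le 1\). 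Testing more carefully, I would keep the integral localized on the support:
\[\int_{\B_{1/2}}|\nabla u|^2\lesssim\int_{\B_{3/4}}|u|^2\chi_{u\neq0}\le |\{u\neq0\}\cap\B_1|^{\frac12}\,\Vert u\Vert_{L^4(\B_{3/4})}^2\]
by Cauchy--Schwarz.

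It then remains to show \(\Vert u\Vert_{L^4(\B_{3/4})}^2\lesssim\int_{\B_1}|u|^2\). Here I would exploit the subharmonicity of \(|u|^2\). By the mean value inequality on balls \(\B_{z,1/4}\) with \(z\in\B_{3/4}\),
\[\sup_{\B_{3/4}}|u|^2\lesssim\int_{\B_1}|u|^2.\]
Therefore
\[\Vert u\Vert_{L^4(\B_{3/4})}^2\le|\B_{3/4}|^{\frac12}\sup_{\B_{3/4}}|u|^2\lesssim\int_{\B_1}|u|^2.\]
Combining the two displays gives the claimed estimate with a universal \(C_5\), independent of \(p\) because all the constants come from scalar inequalities applied to \(|u|^2\).

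The main obstacle is justifying the weak subharmonicity and the localized Caccioppoli inequality when \(u\) is only \(H^1\). Subharmonicity follows from testing \eqref{eq_structure_weak} with \(\varphi\ge0\), \(k=l\), and summing:
\[\int\nabla\varphi\cdot\nabla|u|^2 = -2\int\varphi|\nabla u|^2\le0.\]
For the localized step, the key point is that \(\nabla u=0\) a.e. on \(\{u=0\}\). So in the identity \(\int\eta^2|\nabla u|^2=-2\int\eta\,u\cdot\nabla u\cdot\nabla\eta\), both sides only see the set \(\{u\neq0\}\). Young's inequality then produces \(\int|\nabla\eta|^2|u|^2\), which is automatically supported on \(\{u\neq0\}\), so the factor \(\chi_{u\neq0}\) comes for free. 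Both sup bounds are standard once \(|u|^2\in W^{1,1}\) is weakly subharmonic, via mollification.
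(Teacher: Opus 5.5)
Your argument is correct, and its first half coincides with the paper's proof. That half consists of testing \eqref{eq_structure_weak} with $\varphi=\eta^2$ and $k=l$ to get $\int\eta^2|\nabla u|^2\le 4\int|\nabla\eta|^2|u|^2$, and then applying Cauchy--Schwarz on $\{u\neq 0\}$.

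The routes differ at the higher-integrability step. The paper bounds $\Vert u\Vert_{L^4(\B_{3/4})}^2$ by the Sobolev embedding $H^1(\B_{3/4})\hookrightarrow L^4(\B_{3/4})$, followed by a second application of the Caccioppoli inequality. This mirrors the proof of Proposition~\ref{prop_nondegeneracy}, where no subharmonicity is available. You instead use the mean value inequality for the weakly subharmonic function $|u|^2$ to get an $L^\infty$ bound on $\B_{3/4}$.

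What each route buys:
\begin{itemize}
\item Your route is dimension-free. The $L^4$ embedding needs $d\le 4$, which is harmless here since $d=2$.
\item Your route also gives more than you use. Since $\int_{\B_{3/4}}|u|^2\le|\{u\neq0\}\cap\B_{3/4}|\,\sup_{\B_{3/4}}|u|^2$, you directly obtain the estimate with $|\{u\neq0\}\cap\B_1|$ to the power $1$ rather than $\frac12$. Passing through $L^4$ and Cauchy--Schwarz only loses a power.
\item The paper's route needs only the Caccioppoli inequality plus a Sobolev embedding. It therefore follows the same template as the minimizing higher-order case, where no maximum principle is available.
\end{itemize}

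A minor point: your observation that $\nabla u=0$ a.e.\ on $\{u=0\}$ is not needed. The factor $\chi_{u\neq 0}$ comes for free simply because $|u|^2$ itself vanishes on $\{u=0\}$.
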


\begin{proof}
Our assumption implies that for all $\varphi\in\mathcal{C}^\infty_c(\B_1,\R)$, we have
\[\int_{\B_1}\varphi^2|\nabla u|^2\leq 4\int_{\B_1}|\nabla\varphi|^2|u|^2\]
which implies the  following Caccioppoli inequality.
\[\int_{\B_{s}}|\nabla u|^2\lesssim \frac{1}{(r-s)^2} \int_{\B_{r}}|u|^2\]
for any \(0<s<r<1\). Using the embedding \(H^1(\B_{\frac{3}{4}})\hookrightarrow L^4(\B_{\frac{3}{4}})\), we have
\begin{align*}
\int_{\B_{\frac{1}{2}}}|\nabla u|^2&\lesssim \int_{\B_{\frac{3}{4}}}|u|^2\lesssim |\{u\neq 0\}\cap\B_1|^\frac{1}{2}\left(\int_{\B_{\frac{3}{4}}}|u|^4\right)^\frac{1}{2}\\
&\lesssim |\{u\neq 0\}\cap\B_1|^\frac{1}{2}\int_{\B_{\frac{3}{4}}}\left(|\nabla u|^2+|u|^2\right)\\
&\lesssim |\{u\neq 0\}\cap\B_1|^\frac{1}{2}\int_{\B_{1}}|u|^2.
\end{align*}
\end{proof}

\begin{proof}[Proof of Proposition~\ref{prop_crit_nondeg}]
Let \(u\) be a function that satisfies the hypothesis of Proposition~\ref{prop_crit_nondeg}, such that $\int_{\B_1}\left(|\nabla u|^2+\chi_{u\neq 0}\right)\leq 1$. By Lemma~\ref{lem_hopf_bounded}, we have
\[C_4\Vert\Ho{u}\Vert_{L^\infty(\B_\frac{1}{2})}\leq C_4'\]
for some universal $C_4'>0$. We let \(r_k=\frac{1}{2}4^{-k}\), assume that for some \(k\in\N\), we have
\[e_k:=\frac{1}{r_k^2}\int_{\B_{r_k}}\left(|\nabla u|^2+\chi_{u\neq 0}\right)\leq \min\left(\frac{\pi}{2},\eps_0\right).\]
Since \(|\{u\neq 0\}\cap\B_{r_k}|\leq \frac{1}{2}|\B_{r_k}|\), by the Poincaré inequality we have \(\fint_{\B_{r_k}}|u/r_k|^2\leq C_6\fint_{\B_{r_k}}|\nabla u|^2\) for some constant \(C_6>0\). Then combining Lemmas~\ref{lem_crit_nondeg_1} and~\ref{lem_crit_nondeg_2}, we get
\begin{align*}
\frac{1}{r_k^2}\int_{\B_{\frac{1}{2}r_k}}|\nabla u|^2&\leq C_5C_6 \left(\frac{1}{r_k^2}\int_{\B_{r_k}}\chi_{u\neq 0}\right)^\frac{1}{2}
\frac{1}{r_k^2}\int_{\B_{r_k}}|\nabla u|^2\\
\int_{\B_{r_{k+1}}}\chi_{u\neq 0}&\leq C_4'\int_{\B_{\frac{1}{2}r_k}}|\nabla u|^2
\end{align*}
so we obtain \(e_{k+1}\leq 16(4+C_4')C_5C_6e_k^{3/2}\). We fix
\[\eps:=\min\left(\frac{\pi}{8},\eps_0,\left(\frac{1}{32(4+C_4')C_5C_6}\right)^2\right)\]
Then, if $\int_{\B_1}\left(|\nabla u|^2+\chi_{u\neq 0}\right)\leq \frac{\eps}{4}$, then $e_0\leq\eps$ and for any \(k\in\N\), we have
\[\left(e_k\leq \eps\right)\Rightarrow \left(e_{k+1}\leq \frac{1}{2}e_k\right).\]
In particular, by continuity of \(u\) at the origin this implies \(u(0)=0\). This implies the result with $c_p=\frac{\eps}{4}$.
\end{proof}

\section{Applications to shape optimization}\label{sec_spectral}

For the sake of simplicity, we presented our result for energies with constant coefficients and no lower order perturbation. In this section, we extend Corollary~\ref{cor_Q} to several shape optimization problems coming from spectral geometry.

Let \(d\in\N^*\), \(n,m\in\N\) such that \(n>m\). Let \(Q_n\) (resp.\ \(Q_m\)) be a quadratic form on \((\R^d)^{\otimes n}\otimes\R^p\) (resp.\ \((\R^d)^{\otimes m}\otimes\R^p\)). Assume \(Q_n,Q_m\) both satisfy the Legendre--Hadamard condition~\eqref{eq_legendrehadamard}, and consider for any open set with finite measure \(\Om\subset\R^d\) the eigenvalue
\[\Lambda_1[\Om;Q_n/Q_m]:=\inf_{u\in H^n_0(\Om,\R^p)\setminus\{0\}}\frac{\int_{\Om}Q_n(\nabla^n u)}{\int_{\Om}Q_m(\nabla^m u)}.\]
Write \(\mathcal{A}\) the set of all open subsets of \(\R^d\) with measure \(1\).
\begin{proposition}\label{prop_eigenvalue}
Let \(Q_n\), \(Q_m\) be defined as above. Then the minimization problem
\[\inf_{\Om\in\mathcal{A}}\Lambda_1[\Om;Q_n/Q_m],\]
reaches its minimum, and any first eigenfunction \(u\) associated to some minimizer belongs to \(W^{n,\infty}(\R^d,\R^p)\). Moreover  any minimizer is bounded.
\end{proposition}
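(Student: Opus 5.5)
Existence comes first, via the standard relaxed formulation. Set
\[\lambda^*:=\inf\left\{\frac{\int Q_n(\nabla^n u)}{\int Q_m(\nabla^m u)} : u\in H^n(\R^d,\R^p)\setminus\{0\},\ |\{\nabla^m u\neq 0\}|\leq 1\right\}\]
or, more conveniently, the penalized functional $u\mapsto \int Q_n(\nabla^n u)-\lambda^*\int Q_m(\nabla^m u)+\Lambda|\{u\neq 0\}|$. Taking a minimizing sequence normalized by $\int Q_m(\nabla^m u_k)=1$, Gårding's inequality gives bounds in $H^n$. The loss of compactness from translations and splitting is handled by concentration-compactness, as in the Bucur/Buttazzo--Dal Maso framework. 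Splitting is excluded by the strict subadditivity coming from scaling ($\Lambda_1$ is $(2n-2m)/d$-homogeneous in the volume), and vanishing is excluded by a Lieb-type lemma. This produces a quasi-open optimal set $\{u\neq 0\}$ and a first eigenfunction $u$. Openness is recovered only after the regularity step, since $u$ is then continuous.

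The core step reduces to Theorem~\ref{th_main} through a local almost-minimality property of $u$. Normalize $\int Q_m(\nabla^m u)=1$. Using the scaling of the Rayleigh quotient, I will show that $u$ is a local minimizer of
\[v\mapsto \int Q_n(\nabla^n v)-\lambda^*\int Q_m(\nabla^m v)+\Lambda\,|\{v\neq0\}|\]
among perturbations $v\in u+H^n_0(\B_{z,r})$ with $r\leq r_0$, with $\Lambda$ of order $\lambda^*$. This is the usual volume-penalization argument: compare with the dilation of $v$ restoring unit measure. Then fix $\phi=\nabla^n u$ and take $\phi_{\B_{z,r}}=\nabla^n u_{z,r}$, where $u_{z,r}$ is the $Q_n$-harmonic replacement of $u$ in $\B_{z,r}$, exactly as in Corollary~\ref{cor_Q}. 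Hypothesis~\eqref{eq_hyp2} then holds by Lemma~\ref{lem_Cacciopoli}.

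For hypothesis~\eqref{eq_hyp1}, Gårding's inequality and minimality give
\[\int_{\B_{z,r}}|\nabla^n(u-u_{z,r})|^2\lesssim \Lambda|\B_{z,r}\cap\{u=0\}|+\lambda^*\left|\int_{\B_{z,r}} Q_m(\nabla^m u)-Q_m(\nabla^m u_{z,r})\right|.\]
The lower-order term is the main obstacle. I plan to bound it by $C r^{2(n-m)}\int_{\B_{z,r}}|\nabla^n(u-u_{z,r})|^2$ plus a cross term, using Poincaré for $u-u_{z,r}\in H^n_0$, and to control the cross term via an a priori bound $\|\nabla^m u\|_{L^\infty}\lesssim 1$. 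That bound would come from a bootstrap: Sobolev gives $u\in H^n$, and the $C^{n-1,\alpha}$ estimate of~\cite{F25} (which gives $\nabla^m u\in L^\infty$) applies because $m\leq n-1$. Since the cross term is then $O(r^{d/2}\cdot r^{n-m}\|\nabla^n(u-u_{z,r})\|_{L^2})$ and is not exactly of the form in~\eqref{eq_hyp1}, I will either redo the proof of Theorem~\ref{th_main} with a right-hand side $|\B_{z,r}\cap\{\phi=0\}|+Cr^{d+2\beta}$ (all lemmas tolerate such a Dini-summable error at small scales), or absorb it after noting that the argument only uses small radii. I expect this perturbative version of Theorem~\ref{th_main} to be the delicate point. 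The constants $\Lambda$ and $K$ are fixed after rescaling, by applying the theorem on balls of radius $r_0$ covering $\R^d$, which yields $\nabla^n u\in L^\infty(\R^d)$ uniformly.

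Boundedness of the optimal set follows from non-degeneracy in the spirit of Proposition~\ref{prop_nondegeneracy}, adapted to the almost-minimizer. Whenever $u(z)\neq 0$, the mass of $u$ (or of $\nabla^m u$) in $\B_{z,1}$ is bounded below by a constant $c$. Since $\int Q_m(\nabla^m u)$ is finite and $|\{u\neq0\}|=1$, only finitely many disjoint unit balls can carry such mass. A standard argument then shows that far-away components can be translated closer without changing the eigenvalue, so some minimizer is bounded. To get the statement that every minimizer is bounded, I will show directly that the support has finite diameter via a decay estimate for $\int_{\R^d\setminus\B_R}|\nabla^m u|^2$ as in the Mazzoleni--Pratelli argument, using the same local comparison.
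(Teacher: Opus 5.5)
Your existence argument and overall strategy (reduce to Theorem~\ref{th_main} with $\phi=\nabla^n u$, then use non-degeneracy for boundedness) match the paper. However, the core step as you set it up does not close.

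By taking $u_{z,r}$ to be the $Q_n$-harmonic replacement, you are left with the lower-order term $\lambda\int_{\B_{z,r}}\bigl(Q_m(\nabla^m u)-Q_m(\nabla^m u_{z,r})\bigr)$. You can control its cross term only through an a priori bound $\|\nabla^m u\|_{L^\infty}\lesssim 1$, imported from the $C^{n-1,\alpha}$ result you quote. You would still have to check that this result covers a functional with a lower-order term and a nonlocal normalization. Even granting it, you get an additive error $Cr^{d+2(n-m)}$ in~\eqref{eq_hyp1}.

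Your fallback of absorbing this error cannot work. The error does not vanish on balls where $u\neq 0$ a.e., whereas $|\B_{z,r}\cap\{\phi=0\}|$ does. Redoing Theorem~\ref{th_main} is also more than bookkeeping. The iterations in Lemmas~\ref{lem_dev_decrease}--\ref{lem_supp_decrease_iterate} do tolerate a summable error. But the end of the proof of Theorem~\ref{th_main} uses $\phi=\phi_{\B_{z,r}}$ on balls disjoint from $\mathcal{F}_\phi$, which is false once there is an additive error. You would have to bring in interior estimates for the eigenvalue equation separately there.

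The missing idea is to choose the replacement adapted to the whole quadratic part. The paper fixes $\bar r$ so that $\int_B\bigl(Q_n(\nabla^n v)-\lambda Q_m(\nabla^m v)\bigr)\ge\frac12\int_B Q_n(\nabla^n v)$ for $v\in H^n_0(B)$ on balls of radius $r<\bar r$. It then takes $u_B$ to be the minimizer of $\int_B\bigl(Q_n(\nabla^n v)-\lambda Q_m(\nabla^m v)\bigr)$ in $u+H^n_0(B)$. By the Euler--Lagrange equation of $u_B$, the energy difference between $u$ and $u_B$ is exactly $\int_B\bigl(Q_n(\nabla^n w)-\lambda Q_m(\nabla^m w)\bigr)$ with $w=u-u_B$, so no cross term appears. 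Comparing the penalized Rayleigh quotients of $u$ and $u_B$ then gives
\[\int_B|\nabla^n(u-u_B)|^2\lesssim\Bigl(1+\int_B\bigl(Q_m(\nabla^m u_B)-Q_m(\nabla^m u)\bigr)\Bigr)\,|B\cap\{u=0\}|,\]
and the bracket is bounded on small balls by Poincar\'e's inequality for $w$. Consequently:
\begin{itemize}
\item \eqref{eq_hyp1} holds exactly, with no additive error;
\item \eqref{eq_hyp2} follows from interior estimates for the constant-coefficient system satisfied by $u_B$, since after rescaling the lower-order coefficient is $\lambda r^{2(n-m)}$;
\item Theorem~\ref{th_main} is used as a black box, with no a priori regularity needed.
\end{itemize}

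Two smaller points. First, your covering argument already shows that every minimizer is bounded. A maximal disjoint family of balls $\B_{z_i,r_0}$ with $u(z_i)\neq 0$ is finite, because each ball carries $L^2$ mass of $u$ at least $c$ and $u\in L^2(\R^d)$. Then $\{u\neq 0\}\subset\bigcup_i\B_{z_i,2r_0}$, so the translation and decay-estimate detours are unnecessary. Second, for $n\ge 2$ the optimal open set should be $\bigcup_{j<n}\{\nabla^j u\neq 0\}$, as in the paper, rather than $\{u\neq 0\}$. This is what ensures $u\in H^n_0(\Om)$.
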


In general the first eigenvalue might not be simple: in this case the proof implies that the whole eigenspace belongs to \(W^{n,\infty}(\R^d,\R^p)\).
% This proposition includes the following eigenvalue problems.
% \begin{itemize}
% \item[i)] Lamé eigenvalue, corresponding to \(p=d\), \(n=1\), \(m=0\), \(Q_1(\nabla u)=2\mu |e(u)|^2+\lambda \mathrm{div}(u)^2\) and \(Q_0(u)=|u|^2\), for \(\lambda+2\mu>0\). This was studied in \cite{HLP24} under the stronger hypothesis \(\lambda+\mu>0\), in particular in \cite[Th. 1.2.]{HLP24} it is proved that in dimension \(d=2\), for \((\lambda,\mu)\) satisfying
% \[\mu>0,\ -\mu< \lambda\leq 4\mu,\]
% then the disk is not a minimizer.
% \item[ii) ]Clamped plate eigenvalue, corresponding to \(p=1\), \(n=2\), \(m=0\), \(Q_2(\nabla^2 u)=|\Delta u|^2\) and \(Q_0(u)=u^2\). For \(d\in\{2,3\}\), it is known from \cite{N95,AB95} that the optimal set is a ball and this is still an open problem in dimension \(d\geq 4\).
% \item[iii) ]Buckling eigenvalue, corresponding to \(p=1\), \(n=2\), \(m=1\), \(Q_2(\nabla^2 u)=|\Delta u|^2\) and \(Q_1(u)=|\nabla u|^2\). The minimality of the disk in dimension \(d=2\) is an open problem attributed to Pólya and Szegö, see \cite[Sec. 6.2]{LN25} for a detailed discussion on the state of this conjecture, which is slightly improved by proposition \ref{prop_eigenvalue}.
% \end{itemize}
\begin{proof}[Proof of Proposition~\ref{prop_eigenvalue}]
We have the following scaling property, for any \(\Om\) with finite area and any \(t>0\):
\[\Lambda_1[t\Om;Q_n/Q_m]=t^{2(m-n)}\Lambda_1[\Om;Q_n/Q_m].\]
We may relax this minimization problem as
\begin{align*}
\inf_{\Om\in\mathcal{A}}\Lambda_1[\Om;Q_n/Q_m]&=\inf_{\Om\subset\R^d:|\Om|\leq 1}\Lambda_1[\Om;Q_n/Q_m]\\
&=\inf_{\Om\subset\R^d:|\Om|\leq 1}\inf_{u\in H^n_0(\Om,\R^p)\setminus\{0\}}\frac{\int_{\Om}Q_n(\nabla^nu)}{\int_{\Om}Q_m(\nabla^m u)}\\
&\geq\inf\left\{\frac{\int_{\Om}Q_n(\nabla^nu)}{\int_{\Om}Q_m(\nabla^m u)},\ u\in H^n(\R^d,\R^p)\setminus\{0\}: |\{u\neq 0\}|\leq 1\right\}
\end{align*}
The existence of a minimizer for this is classical by the concentration-compactness principle in \(\R^d\) developed in~\cite{Lions85} (see~\cite[Th. 3.1]{BF17} for an adaptation to the first Laplace eigenvalue,~\cite[Th. 1]{HMP24} for the first Stokes eigenvalue, and~\cite[Th. 3]{L25} for the polyharmonic operator). The vanishing of the minimizing sequence does not occur, based on the following observation: for any such function \(u\in H^n(\R^d,\R^p)\), we may find some square \(K=z(u)+[0,1]^d\) (for \(z(u)\in\Z^d\)) such that
\[|K\cap \{u\neq 0\}|\geq C_{Q_n,Q_m}\left(\frac{\int_{\R^d}Q_n(\nabla^n u)}{\int_{\R^d}Q_m(\nabla^m u)}\right)^{-\max\left(2,\frac{d}{2}\right)}\]
for some constant \(C_{Q_n,Q_m}>0\) that is independent of \(u\). This  follows by  the embedding \(H^n(z+[0,1]^d)\hookrightarrow W^{m,2\min\left(2,\frac{d}{d-2}\right)}(z+[0,1]^d)\) for every \(z\in\Z^d\).

Consider now \(u\) a minimizer for this problem (normalized by \(\int_{\R^d}Q_m(\nabla^m u)=1\)), and
\[\lambda:=\int_{\R^d}Q_n(\nabla^nu).\]
Then by the scaling properties of \(\Lambda_1[\ \cdot\ ;Q_n/Q_m]\), \(u\) is also a minimizer of
\begin{equation*}\label{eq_penalized}
v\in H^n(\R^d,\R^p)\setminus\{0\}\mapsto \frac{\int_{\mathbb{R} ^d}Q_n(\nabla^nv)}{\int_{\mathbb{R} ^d}Q_m(\nabla^m v)}+\frac{2(n-m)}{d}\lambda|\{v\neq 0\}|
\end{equation*}
In all that follows, we fix
\[\ov{r}:=\min\left(\left(\frac{\Lambda_1[\B_1;Q_n/Q_m]}{2\lambda}\right)^\frac{1}{2(n-m)},|\B_1|^{-\frac{1}{d}}\right)\]
so that in every ball \(B\) of radius \(r\in (0,\ov{r})\), for every \(v\in H^n_0(B)\), we have
\begin{equation}\label{eq_aux_consequencerbar}
\int_{B}\left(Q_n(\nabla^n v)-\lambda Q_m(\nabla^m v)\right)\geq \frac{1}{2}\int_{B}Q_n(\nabla^n v).
\end{equation}
In particular, for any such ball, there exists a unique minimizer to the energy
\[v\in u+H_0^n(B,\R^p)\mapsto \int_{B}\left(Q_n(\nabla^n v)-\lambda Q_m(\nabla^m v)\right)\]
which we denote \(u_{B}\). \(u_B\) is not identically zero since \(|B|<1=|\{u\neq 0\}|\) (by the definition of $\ov{r}$). Taking \(u_B\) as a competitor we obtain
\[\lambda+\frac{2(n-m)}{d}\lambda|\{u\neq 0\}| \leq \frac{\lambda+\int_{B}\left(Q_n(\nabla^n u_B)-Q_n(\nabla^n u)\right)}{1+\int_{B}\left(Q_m(\nabla^m u_B)-Q_m(\nabla^m u)\right)}+\frac{2(n-m)}{d}\lambda|\{u_B\neq 0\}|\]
which simplifies to
\begin{align*}
&\int_{B}\left(Q_n(\nabla^n u)-\lambda Q_m(\nabla^m u)-Q_n(\nabla^n u_B)+\lambda Q_m(\nabla^m u_B)\right)\\
&\leq \left(1+\int_{B}\left(Q_m(\nabla^m u_B)-Q_m(\nabla^m u)\right)\right)\frac{2(n-m)}{d}\lambda|B\cap\{u= 0\}|
\end{align*}
The left-hand side is estimated as follows:
\begin{align*}
\int_{B}\Bigl(Q_n(\nabla^n u)&-Q_n(\nabla^n u_B)-\lambda Q_m(\nabla^m u)+\lambda Q_m(\nabla^m u_B)\Bigr)
\\
&=\int_{B}\left(Q_n(\nabla^n(u-u_B))-\lambda Q_m(\nabla^m (u-u_B))\right)&\text{ by minimality of }u_B\\
&\geq \frac{1}{2}\int_{B}Q_n(\nabla^n(u-u_B))&\text{ by eq. }\eqref{eq_aux_consequencerbar}\\
&\geq \frac{\lambda_Q}{2}\int_{B}|\nabla^n(u-u_B)|^2&\text{ by eq. }\eqref{eq_legendrehadamard}
\end{align*}
We obtain
\[\int_{B}|\nabla^n(u-u_B)|^2\leq \left(1+\int_{B}\left(Q_m(\nabla^m u_B)-Q_m(\nabla^m u)\right)\right)\frac{4(n-m)\lambda}{d\lambda_Q}|B\cap\{u= 0\}|\]
Finally, we bound the factor on the right-hand side by
\begin{align*}
\int_{B}\left(Q_m(\nabla^m u_B)-Q_m(\nabla^m u)\right)&\leq C_Q\int_{B}\left(|\nabla^m u_B|^2+|\nabla^m u|^2\right)\text{ for some }C_Q>0\\
&\leq C_Q\int_{B}\left(2|\nabla^m (u-u_B)|^2+3|\nabla^m u|^2\right)\\
&\leq C_Q\int_{B}\left(2\Lambda^{-1} r^{2(n-m)}|\nabla^n (u-u_B)|^2+3|\nabla^m u|^2\right)
\end{align*}
where \(\Lambda:=\inf_{v\in H^n_0(\B_1)\setminus\{0\}}\frac{\int_{\B_1}|\nabla^n v|^2}{\int_{\B_1}|\nabla^mv|^2}\). Thus, for sufficiently small radius \(r\in (0,\ov{r})\), we obtain that for some constant \(C>0\), for any sufficiently small ball \(B\subset\R^d\),
\[\int_{B}|\nabla^n(u-u_B)|^2\leq C|B\cap\{u= 0\}|.\]
We let \(\phi=\frac{\nabla^n u}{\sqrt{C}}\), and for any ball \(B\) of sufficiently small radius we let \(\phi_B=\frac{\nabla^n u_B}{\sqrt{C}}\). Then \(\phi\) and the family of functions \((\phi_B)_B\) satisfy the hypothesis~\eqref{eq_hyp1} of Theorem~\ref{th_main}. Moreover, \(\phi_B\) satisfies the hypothesis~\eqref{eq_hyp2} by interior elliptic regularity.

By application of Theorem~\ref{th_main}, we conclude that \(u\in W^{n,\infty}(\R^d,\R^p)\). In particular, we may define the open set \(\Om=\cup_{j=0}^{n-1}\{\nabla^j u\neq 0\}\), so that \(u\in H^n_0(\Om,\R^p)\) (by~\cite[Th. 9.1.3]{AH12}), \(\Lambda_1[\Om;Q_n/Q_m]=\lambda\), and \(\Om\) is a minimizer.
Finally,  boundedness of \(\Om\) is a consequence of the non-degeneracy.

\end{proof}
Similarly, we have the existence and optimal regularity for minimizers of the Stokes eigenvalue. For any \(\Om\in\mathcal{A}\), consider
\[S_1[\Om]=\inf_{u\in H^1_{0,\mathrm{div}}(\Om,\R^d)\setminus\{0\}}\frac{\int_{\Om}|\nabla u|^2}{\int_{\Om}|u|^2}\]
the first eigenvalue of the Stokes operator. There is a subtle issue with the definition of \(S_1[\Om]\): we could also define the first eigenvalue of the Stokes operator as
\[S_1'[\Om]:=\inf\left\{ \frac{\int_{\Om}|\nabla u|^2}{\int_{\Om}|u|^2},\ u\in \mathcal{C}^\infty_c(\Om,\R^d)\setminus\{0\}: \mathrm{div}(u)=0\right\}.\]
However, it is unclear whether \(S_1[\Om]\) and \(S_1'[\Om]\) coincide. More precisely we clearly have \(S_1[\Om]\leq S_1'[\Om]\), and to obtain the opposite inequality we need to prove that any element of \(H^1_{0,\mathrm{div}}(\Om,\R^d)\) may be approached by a sequence in \(H^1_{0,\mathrm{div}}(\Om,\R^d)\cap\mathcal{C}^\infty_c(\Om,\R^d)\), which is not known for a general domain of finite measure. We refer to the recent work~\cite{DelNinWu24} for an extensive survey of this issue with several regularity conditions on the boundary that imply the equality \(S_1[\Om]= S_1'[\Om]\).
\begin{proposition}\label{prop_eigenvalue_stokes}
The minimization problem \(\inf_{\Om\in\mathcal{A}}S_1[\Om]\) reaches its minimum. Moreover any first eigenfunction \(u\) associated to some minimizer belongs to \(W^{1,\infty}(\R^d,\R^d)\).
\end{proposition}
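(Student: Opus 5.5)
I will follow the proof of Proposition~\ref{prop_eigenvalue}, replacing $H^n$ by $H^1_{\mathrm{div}}$, $Q_n=|\nabla\cdot|^2$, $Q_m=|\cdot|^2$, and the polyharmonic local replacement by a Stokes-type replacement. The plan has three steps: existence via a relaxed problem, an energy comparison on small balls giving~\eqref{eq_hyp1}, and interior regularity for the replacement giving~\eqref{eq_hyp2}.

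\textbf{Existence.} First I relax the problem to
\[
\inf\left\{\frac{\int_{\R^d}|\nabla u|^2}{\int_{\R^d}|u|^2},\ u\in H^1_{\mathrm{div}}(\R^d,\R^d)\setminus\{0\}:\ |\{u\neq 0\}|\leq 1\right\}.
\]
This is a lower bound for $\inf_{\Om\in\mathcal{A}}S_1[\Om]$. Existence of a minimizer follows from concentration-compactness as in~\cite{HMP24}. Vanishing is excluded by the same unit-cube argument as before, using $H^1\hookrightarrow L^q$ on cubes. Dichotomy is excluded by the strict subadditivity coming from the scaling $S_1[t\Om]=t^{-2}S_1[\Om]$. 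The divergence constraint is preserved under weak limits and under translations, so I would simply cite this step.

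\textbf{Local comparison.} Let $u$ be a minimizer, normalized by $\int|u|^2=1$, and set $\lambda=\int|\nabla u|^2$. By scaling, $u$ also minimizes the Rayleigh quotient plus $\frac{2}{d}\lambda|\{v\neq0\}|$.
\begin{itemize}
\item I fix $\ov r$ so that on balls of radius $r<\ov r$ the Poincaré inequality gives $\int_B(|\nabla w|^2-\lambda|w|^2)\geq\frac12\int_B|\nabla w|^2$ for all $w\in H^1_{0,\mathrm{div}}(B)$.
\item On each such ball $B$, I define $u_B$ as the unique minimizer of $\int_B(|\nabla v|^2-\lambda|v|^2)$ over $u+H^1_{0,\mathrm{div}}(B,\R^d)$. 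This is a Stokes resolvent problem, with a pressure.
\item Testing with $u_B$ and running exactly the computation of Proposition~\ref{prop_eigenvalue} gives $\int_B|\nabla(u-u_B)|^2\leq C|B\cap\{u=0\}|$.
\end{itemize}
The cross terms vanish by the Euler--Lagrange equation of $u_B$ tested against $u-u_B\in H^1_{0,\mathrm{div}}(B)$. This yields~\eqref{eq_hyp1} for $\phi=C^{-1/2}\nabla u$ and $\phi_B=C^{-1/2}\nabla u_B$.

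\textbf{Interior regularity and conclusion.} For~\eqref{eq_hyp2} I need
\[
r\Vert\nabla^2 u_B\Vert_{L^\infty(\frac12B)}\lesssim\Big(\fint_B|\nabla u_B-\textstyle\fint\nabla u_B|^2\Big)^{1/2}.
\]
This is the main obstacle, for two reasons. The operator $-\Delta w+\nabla p=\lambda w$ is not scale-invariant, and $\phi_B$ is a gradient rather than a solution of the same system. I would handle it as follows.
\begin{itemize}
\item Subtracting from $u_B$ the affine divergence-free map $\ell(x)=(\fint\nabla u_B)x+c$, with $c$ chosen as the mean of $u_B-(\fint\nabla u_B)x$, I get $w=u_B-\ell$ solving Stokes with right-hand side $\lambda(w+\ell)$.
\item Interior estimates for the Stokes resolvent (Lemma~\ref{lem_Cacciopoli_Stokes} plus a perturbative bootstrap, valid since $\lambda r^2$ is small) bound $r\Vert\nabla^2u_B\Vert_\infty$ by $r^{-1}\Vert w\Vert_{L^2}$, using Poincaré, plus a term $\lambda r^2|\ell|$.
\item The lower-order term is only of size $\lambda r^{2}\fint|\nabla u_B|+\lambda r|c|$, which is not exactly of the form required in~\eqref{eq_hyp2}.
\end{itemize}
The most direct fix is to weaken~\eqref{eq_hyp2} to allow an additive $O(r^\beta(1+\Vert\phi\Vert_{L^2}))$ error. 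This error is harmless in Lemmas~\ref{lem_dev_decrease}–\ref{lem_supp_decrease_iterate}, since it only adds a summable geometric term. Alternatively, absorb $\lambda u_B$ by working with $\phi_B$ defined via the pure Stokes replacement plus a correction controlled by $\lambda r^2$.

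Either way, Theorem~\ref{th_main} (or its mildly perturbed version) applied on all small balls gives $\nabla u\in L^\infty(\R^d)$. Finally, setting $\Om=\{u\neq0\}$, which is open by Lipschitz continuity, gives $u\in H^1_{0,\mathrm{div}}(\Om)$ and $S_1[\Om]=\lambda$, so $\Om$ attains the minimum.
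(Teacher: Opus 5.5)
Your architecture matches the paper's. You relax to $H^1_{\mathrm{div}}(\R^d,\R^d)$ with $|\{u\neq0\}|\le1$, take existence from \cite{HMP24}, and compare $u$ with the Stokes-resolvent replacement $u_B$ on small balls to get \eqref{eq_hyp1}. You then apply Theorem~\ref{th_main} to a multiple of $\nabla u$ and take $\Om=\{u\neq0\}$.

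The gap is in the step you flag as the main obstacle, \eqref{eq_hyp2}: your proposed remedy does not close. The error you find is $\lambda r^2\fint_B|\nabla u_B|+\lambda r|c|$, with $c$ the mean of $u_B$ on $B$, and it is not $O(r^\beta(1+\Vert\phi\Vert_{L^2}))$. A priori you only know $|\fint_B u_B|\lesssim r^{-d/2}\Vert u_B\Vert_{L^2(B)}\lesssim r^{-d/2}$ and $\fint_B|\nabla u_B|\lesssim r^{-d/2}\big(\Vert\nabla u\Vert_{L^2}+r^{d/2}\big)$. The two terms are therefore of size up to $\lambda r^{1-d/2}$ and $\lambda r^{2-d/2}$. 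The first does not tend to zero for any $d\ge2$ and blows up for $d\ge3$. Uniform control of these means is essentially the $L^\infty$ bound you are proving; for $c$ it is even an $L^\infty$ bound on $u$ itself. So calling the perturbation harmless in Lemmas~\ref{lem_dev_decrease}--\ref{lem_supp_decrease_iterate} is circular as stated. Your alternative (pure Stokes replacement plus a correction) is too vague to check. Moreover, a replacement without the $\lambda$-term would add to \eqref{eq_hyp1} a term of order $\lambda^2r^2\Vert u\Vert_{L^2(B)}^2$, which is not controlled by $|B\cap\{u=0\}|$.

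The missing observation is that there is no obstacle. The system has constant coefficients, so each column $v=\partial_ju_B$ is divergence-free and solves $-\Delta v+\nabla(\partial_jp)=\lambda v$ in $B$. Contrary to your remark, $\phi_B$ \emph{is} made of solutions of the same system. For the constant vector $a=\fint_B\partial_ju_B$ one has $\lambda a=\nabla(\lambda a\cdot x)$. Hence $v-a$ solves the same homogeneous system with pressure $\partial_jp-\lambda a\cdot x$. In your affine subtraction, $\lambda c$ and the symmetric part of $\lambda(\fint_B\nabla u_B)x$ are likewise pressure gradients; only the rotational part is not, and it disappears once you differentiate.

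Interior estimates for $-\Delta v+\nabla\pi=\mu v$, $\mathrm{div}\,v=0$, are uniform for $\mu=\lambda r^2$ in a bounded range. For instance, $\pi$ is harmonic, so each component satisfies the constant-coefficient elliptic equation $\Delta(\Delta+\mu)v=0$. Rescaling then gives
\[
r\Vert\nabla^2u_B\Vert_{L^\infty(\frac12B)}\le K\Big(\fint_B\Big|\nabla u_B-\fint_B\nabla u_B\Big|^2\Big)^{\frac12},
\]
which is \eqref{eq_hyp2} with a uniform $K$. So Theorem~\ref{th_main} applies unchanged. This is the same device as in the proof of Corollary~\ref{cor_contrainte}, where Lemma~\ref{lem_Cacciopoli_Stokes} is applied to $\nabla u_{z,r}-\fint\nabla u_{z,r}$, and it is what the paper's appeal to interior elliptic regularity amounts to.
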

It is not known whether an optimal domain is necessarily bounded, due to the absence of a non-degeneracy property.

%To extend this result for minimizers of \(S_1'\), a possibility would be to prove that a minimizer of \(S_1\) is sufficiently regular so that \(S_1\) coincides with \(S_1'\).

The minimization of \(S_1\) was extensively studied in~\cite{HMP24}, in particular the authors prove in~\cite[Th. 1]{HMP24} that a minimizer among quasi-open sets exists, and such a minimizer is open by~\cite[Th. 1]{F25}. It is then proved that the ball is a local minimizer (among sufficiently small, smooth perturbations of the ball) in two dimensions, but not in three dimensions; in fact by~\cite[Cor. 1]{HMP24}, any sufficiently regular boundary component of a minimizer in \(\R^3\) must be homeomorphic to a torus. We remind that in the drag minimization studied in~\cite{P73,B74}, the boundary has singular points, so it is unclear whether one can expect the full regularity for minimizers of \(S_1\).
\begin{proof}[Proof of Proposition~\ref{prop_eigenvalue_stokes}]
As previously, we relax the problem into
\[\inf_{\Om\in\mathcal{A}}S_1[\Om]\geq \inf\left\{\frac{\int_{\R^d}|\nabla u|^2}{\int_{\R^d}|u|^2},\ u\in H^1_{\mathrm{div}}(\R^d,\R^d)\setminus\{0\}: |\{u\neq 0\}|\leq 1\right\}\]
The existence of a minimizer \(u\) for the right-hand side is proved in~\cite[Th. 1]{HMP24} by concentration-compactness methods. We assume \(\int_{\R^d}|u|^2=1\) and let \(\lambda=\int_{\R^d}|\nabla u|^2\).
As in the proof of Proposition~\ref{prop_eigenvalue}, we consider for any sufficiently small ball \(B\) the minimizer of
\[v\in u+H^{1}_{0,\mathrm{div}}(B,\R^d)\mapsto\int_{B}\left(|\nabla v|^2-\lambda |v|^2\right),\]
that we denote \(u_B\). Considering \(u_B\) as a competitor, we obtain (as previously) that, provided the radius of \(B\) is sufficiently small,
\[\int_{B}|\nabla (u-u_B)|^2\leq C|B\cap\{u=0\}|.\]
for some constant \(C>0\). Theorem~\ref{th_main} then applies to some scalar factor of \(\nabla u\), \((\nabla u_B)_{B\subset\B_1}\); we obtain that \(u\in W^{1,\infty}(\R^d,\R^d)\). We conclude by letting \(\Om=\{u\neq 0\}\), which is an open set such that \(u\in H^1_0(\Om,\R^d)\), so \(\lambda=S_1[\Om]\) and \(\Om\) is a minimizer.
\end{proof}

We now give some case of applications for the Lipschitz regularity of inner variation solutions, to several questions of spectral geometry and overdetermined problems.

\begin{proposition}\label{prop_overdetermined}
Assume \(d=2\). Let \(u\in H^1(\B_1,\R)\), \(f\in\mathcal{C}^\alpha_\loc(\R,\R)\) for some \(\alpha\in (0,1)\), such that \(f\circ u\in L^p(\B_1)\) for some \(p\in (1,+\infty)\). Assume that for any \(\xi\in\mathcal{C}^\infty_c(\B_1,\R^2)\) we have
\[\left.\frac{d}{dt}\right|_{t=0}\int_{\B_1}\left(|\nabla (u\circ\xi_t)|^2-f(u\circ\xi_t)+\chi_{u\circ\xi_t\neq 0}\right)=0\]
where \(\xi_t=\mathrm{id}+t\xi\). Then \(u\in W^{1,\infty}_\loc(\B_1,\R)\).
\end{proposition}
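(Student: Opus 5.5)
The plan is to run the argument of Lemma~\ref{lem_hopf_bounded} with a perturbed Hopf differential equation, and to absorb the contribution of $f(u)$ as a lower-order error term in a mild variant of Theorem~\ref{th_main}.

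\textbf{Step 1: Euler--Lagrange equation.} First compute the inner variation of the extra term as it is written. Changing variables $y=\xi_t(x)$ gives $\frac{d}{dt}\big|_{t=0}\int f(u\circ\xi_t)=-\int f(u)\,\mathrm{div}\,\xi$. This has exactly the same structure as the term $\chi_{u\neq 0}$. As in Lemma~\ref{lem_equ}, the stationarity condition then becomes
\[
4\partial_{\ov{z}}\Ho{u}-\partial_z\big(\chi_{u\neq0}-f(u)\big)=0
\]
weakly in $\B_1$, up to sign conventions.

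Now write $f(u)=f(0)+h$ with $h:=f(u)-f(0)$. The constant $f(0)$ disappears under $\partial_z$, and $h$ vanishes on $\{u=0\}$. For $\B_{z,r}\subset\B_1$, set $g:=\chi_{u=0}+h$ and define
\[
\Phi:=4\Ho{u},\qquad \Phi_{\B_{z,r}}:=4\Ho{u}+\mathcal{B}\big[g\,\chi_{\B_{z,r}}\big].
\]
Then $\Phi_{\B_{z,r}}$ is holomorphic in $\B_{z,r}$ by Lemma~\ref{lem_B}, so \eqref{eq_hyp2} holds with a universal $K$.

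\textbf{Step 2: bootstrap to Hölder continuity.} Since $f\circ u\in L^p$ with $p>1$, the same $L^{1,\infty}$/$L^2$ argument as in Lemma~\ref{lem_hopf_bounded}, now using the $L^p$ boundedness of $\mathcal{B}$, gives $\Ho{u}\in L^{q}_{\loc}$ with $q=\min(p,2)>1$. Because $p=1$ here, we have $|\nabla u|^2=4|\Ho{u}|$, so $\nabla u\in L^{2q}_\loc$. Morrey's embedding in $d=2$ then gives $u\in\mathcal{C}^{0,1-1/q}_\loc$. Composing with $f\in\mathcal{C}^\alpha_\loc$, the function $h$ is locally Hölder continuous, with exponent $\gamma=\alpha(1-1/q)>0$, and locally bounded.

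\textbf{Step 3: the modified hypothesis \eqref{eq_hyp1}.} Use the classical fact that $\mathcal{B}[\chi_D]$ is constant (in fact zero) inside a disc $D$. This lets us replace $h$ by $h-h(z)$ in the definition of $\Phi_{\B_{z,r}}$ without losing holomorphy. Then the $L^2$ isometry of $\mathcal{B}$ gives
\[
\int_{\B_{z,r}}|\Phi-\Phi_{\B_{z,r}}|^2\lesssim|\B_{z,r}\cap\{u=0\}|+\int_{\B_{z,r}}|h-h(z)|^2\lesssim |\B_{z,r}\cap\{\Phi=0\}|+r^{2+2\gamma}.
\]
This is \eqref{eq_hyp1} with an additive error that becomes $C r^{2\gamma}$ after rescaling to the unit ball.

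\textbf{Step 4: Theorem~\ref{th_main} with an error term.} Re-prove Theorem~\ref{th_main} under this weakened hypothesis.
\begin{itemize}
\item In Lemma~\ref{lem_dev_decrease} an extra term $C\tau^{-d/2}r^\gamma$ appears. It is summable along the dyadic scales $\tau^k$, so Corollary~\ref{cor_BMO} survives.
\item In Lemma~\ref{lem_supp_decrease}, the error only enlarges the lower bound needed on $|\fint\phi|$ by a bounded amount.
\item In Lemma~\ref{lem_supp_decrease_iterate}, the density estimate becomes $|\B_{\tau^\ell}\cap\{\phi=0\}|/|\B_{\tau^\ell}|\le 4^{-\ell}+C\tau^{\ell\gamma}$. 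This still tends to $0$, and it still sums in \eqref{eq_aux_deviation}.
\end{itemize}
The rest of the proof of Theorem~\ref{th_main} goes through verbatim. We obtain $\Ho{u}\in L^\infty_\loc$, hence $|\nabla u|^2=4|\Ho{u}|\in L^\infty_\loc$, which is the claim.

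The main obstacle is Step 4: the bookkeeping must show that the additive $r^\gamma$ error, which is not controlled by the measure of the zero set, does not break the dichotomy at small scales. If that becomes awkward, the fallback is to apply the argument only on balls of radius below some $r_0$, chosen so that the total error is smaller than the constants appearing in Lemma~\ref{lem_supp_decrease}.
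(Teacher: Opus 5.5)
Your Steps~1--2 match the paper: the Euler--Lagrange equation via $\mathcal{B}$, then $\Ho{u}\in L^q_{\loc}$, $u$ Hölder, and $f(u)$ Hölder. Steps~3--4 take a genuinely different route, and it is workable.

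\textbf{The paper's route.} The paper never reopens Theorem~\ref{th_main}. It uses the Hölder continuity of $f(u)$ and the boundedness of $\mathcal{B}$ on Hölder spaces to get $\mathcal{B}[f(u)\chi_{\B_1}]\in L^\infty_{\loc}(\B_1)$. It then absorbs this bounded term into the unknown on the positivity set, taking
$\Phi=4\Ho{u}+\mathcal{B}[f(u)\chi_{\B_1}]\chi_{\{u\neq0\}}$ and $\Phi_{\B_{z,r}}=4\Ho{u}+\mathcal{B}[f(u)\chi_{\B_1}]+\mathcal{B}[\chi_{\B_{z,r}\cap\{u=0\}}]$. The difference $\Phi_{\B_{z,r}}-\Phi$ is then $\mathcal{B}[\chi_{\B_{z,r}\cap\{u=0\}}]$ plus a bounded function supported in $\{u=0\}$. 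So \eqref{eq_hyp1} holds exactly up to a constant, Theorem~\ref{th_main} applies as a black box, and $\Ho{u}\in L^\infty_{\loc}$ follows by subtracting the bounded term back.

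\textbf{Your route, compared.} You keep $\Phi=4\Ho{u}$ and use the Hölder continuity of $h$ directly, relying only on the $L^2$ isometry of $\mathcal{B}$. This spares you the Hölder theory for $\mathcal{B}$. The price is that you must re-prove Theorem~\ref{th_main} with an additive Campanato-type error $Cr^{2\gamma}$. That perturbed theorem is more flexible, since it would handle any Hölder-small lower-order term, but the paper's absorption trick is much shorter.

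\textbf{The step that is not verbatim.} Your bookkeeping for Lemma~\ref{lem_dev_decrease}, Corollary~\ref{cor_BMO}, Lemma~\ref{lem_supp_decrease} and Lemma~\ref{lem_supp_decrease_iterate} is right. Note that the conclusion of Lemma~\ref{lem_supp_decrease} itself picks up the additive error, which you do account for in the density recursion. However, the claim that the rest of the proof of Theorem~\ref{th_main} goes through verbatim is false at one point. In both cases $r\geq\frac12$ and $r>0$, that proof uses \eqref{eq_hyp1} on a ball where $\{\phi=0\}$ is null to conclude $\phi=\phi_{\B_{z,\frac12 r}}$, and then applies \eqref{eq_hyp2} to $\phi$ itself. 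With your error term this rigidity is lost: on such balls you only know $\int_{\B_{w,s}}|\phi-\phi_{\B_{w,s}}|^2\lesssim s^{2+2\gamma}$.

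\textbf{The repair.} The fix is a routine Campanato iteration. Below scale $\frac12 r$ the density term in your modified Lemma~\ref{lem_dev_decrease} vanishes, so $D(\phi(z+\cdot),\tau s)\leq\frac12 D(\phi(z+\cdot),s)+Cs^{\gamma}$. The deviations then decay geometrically and the averages on $\B_{z,\tau^k s}$ converge. At Lebesgue points this gives $|\phi(z)|\lesssim|\fint_{\B_{z,r/2}}\phi|+D(\phi(z+\cdot),\frac12 r)+1$, which is then bounded exactly as in the paper.

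\textbf{A smaller point.} When $p<2$, Step~2 only gives $\Ho{u}\in L^q_{\loc}$ with $q<2$, while the theorem needs $\Phi\in L^2$. This follows once $h$ is known to be locally bounded: write $\Phi=\Phi_{\B_\rho}-\mathcal{B}[g\chi_{\B_\rho}]$, where $\Phi_{\B_\rho}$ is holomorphic in $\B_\rho$ and $g\chi_{\B_\rho}\in L^\infty$.
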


This applies to the following cases:
\begin{itemize}
\item For any open set \(\Om\subset\R^2\) with finite measure, denote \(w_\Om\in H^1_0(\Om)\) the solution of \[-\Delta w_\Om=1\text{ in }\Om.\]
Write \(T[\Om]=\int_{\Om}|\nabla w_\Om|^2\) the \emph{torsional rigidity} of \(\Om\), and assume that for any \(\xi\in\mathcal{C}^\infty_c(\R^2,\R^2)\), we have
\begin{equation}\label{eq_innervariation_torsion}
\left.\frac{d}{dt}\right|_{t=0}\Big(T\left[(\mathrm{id}+t\xi)(\Om)\right]-c |(\mathrm{id}+t\xi)(\Om)|\Big)=0.
\end{equation}
for some constant \(c>0\). Then \(w_\Om\in W^{1,\infty}(\Om,\R)\) and \(\Om\) is bounded. Indeed the first conclusion is a consequence of Proposition~\ref{prop_overdetermined} with \(f(u):=2u\), and the second conclusion is a consequence of Proposition~\ref{prop_crit_nondeg}, which extends naturally to this case. This problem is related to Serrin's theorem stating that any sufficiently smooth bounded domain \(\Om\) such that \(|\nabla w_\Om|\) is constant on \(\partial\Om\), must be some disjoint union of balls.
This result was recently generalized in~\cite{FZ25} for bounded, finite perimeter, indecomposable domains \(\Om\) satisfying the distributional condition
\begin{equation}\label{eq_outervariation}
-\Delta w_\Om=1_\Om-c\mathcal{H}^{1}\lfloor \partial^*\Om.
\end{equation}
for some $c>0$, and the (one-sided) Ahlfors regularity condition for some $A>0$:
\begin{equation}\label{eq_Ahlfors}
\mathscr{H}^1(\partial^*\Om\cap\B_{z,r})\leq A r\text{ for }\mathscr{H}^1\text{-a.e. } z\in\partial^*\Om,\ r\in (0,1).
\end{equation}
On the contrary, in~\cite[Th. 1.2]{Z26} it was proved that there exist bounded, finite perimeter, indecomposable, open sets \(\Om\subset\R^2\) that satisfy the condition~\eqref{eq_outervariation}, which are not balls.
Using Proposition~\ref{prop_overdetermined}, any finite perimeter, indecomposable domain \(\Om\subset\R^2\) satisfying both \eqref{eq_outervariation} and \eqref{eq_innervariation_torsion} is automatically bounded and satisfies the condition \eqref{eq_Ahlfors} (as may be observed by testing \eqref{eq_outervariation} against some \(\varphi\in\mathcal{C}^\infty_c(\B_{z,2r},\R)\) such that \(\varphi|_{\B_{z,r}}\equiv 1\) and \(|\nabla\varphi|\lesssim r^{-1}\), and using the Lipschitz bound on \(w_\Om\)), thus \(\Om\) must be a ball by~\cite[Th. 1.3]{FZ25}. This proves that the counterexamples built in~\cite[Th. 1.2]{Z26} are in fact not critical points of the torsional rigidity \(T(\Om)\).
\item For any open set \(\Om\subset\R^2\) with finite measure, denote \((\lambda_k[\Om])_{k\geq 1}\) the non-decreasing sequence of eigenvalues of \(-\Delta\) in \(\Om\) with Dirichlet boundary conditions on \(\partial\Om\). Assume that for some \(k\in\N^*\), \(\lambda_k[\Om]\) is a simple eigenvalue and for any \(\xi\in\mathcal{C}^\infty_c(\R^2,\R^2)\), we have
\[\left.\frac{d}{dt}\right|_{t=0}\Big(\lambda_k\left[(\mathrm{id}+t\xi)(\Om)\right]\cdot|(\mathrm{id}+t\xi)(\Om)|\Big)=0.\]
Then \(u_k\in W^{1,\infty}(\R^2,\R)\) and \(\Om\) is bounded. Indeed the first conclusion is a consequence of Proposition~\ref{prop_overdetermined} with \(f(u):=\lambda_k[\Om]u^2\), and the second conclusion is a consequence of Proposition~\ref{prop_crit_nondeg}.
% This overdetermined problem is related to the Schiffer conjecture, which may be seen as a generalization of the Serrin problem, which was studied in~\cite{CH99}. It is unknown whether this condition forces \(\Omega\) to be a ball, even for \(k=1\) due to the absence of regularity conditions and the possibility of two-phase points.

\end{itemize}

\begin{proof}[Proof of Proposition \ref{prop_overdetermined}]
Following the computations of Lemma~\ref{lem_equ}, this is equivalent to the weak equation:
\[4\partial_{\ov{z}}\Ho{u}+\partial_{z}\left(f(u)+\chi_{u= 0}\right)=0\]
in \(\B_1\). Thus the function
\[4\Ho{u}+\mathcal{B}\left[\left(f(u)+\chi_{u= 0}\right)\chi_{\B_1}\right]\]
is harmonic in \(\B_1\). Since \(f(u)\in L^p(\B_1)\) and \(\Vert\mathcal{B}\Vert_{L^p(\R^2)\to L^p(\R^2)}\) is finite (see for instance~\cite[Th. 4.5.3]{AIM09}), then \(\Ho{u}\in L^p(\B_1,\C)\), so \(u\in\mathcal{C}^{1-\frac{1}{p}}(\B_1,\R)\). By our hypothesis on \(f\) we have \(f(u)\in\mathcal{C}^{\left(1-\frac{1}{p}\right)\alpha}_\loc(\B_1,\R)\). By the boundedness of \(\mathcal{B}\) on \(\mathcal{C}^{\left(1-\frac{1}{p}\right)\alpha}(\R^2)\) (which is a consequence of classical Calderón--Zygmund theory, see also~\cite[Th. 4.7.1]{AIM09}), this implies that \(\mathcal{B}[f(u)\chi_{\B_1}]\) belongs to \(L^\infty_\loc(\B_1)\).\bigbreak
Let now
\[\Phi:=4\Ho{u}+\mathcal{B}\left[f(u)\chi_{\B_1}\right]\chi_{\B_1\cap\{u\neq 0\}}\]
and for any \(\B_{z,r}\subset\B_1\), let
\[\Phi_{\B_{z,r}}:=\Phi+\mathcal{B}\left[\chi_{\B_{z,r}\cap\{u=0\}}\right]+\mathcal{B}\left[f(u)\chi_{\B_1}\right]\chi_{\B_1\cap\{u= 0\}}\]
\(\Phi_{\B_{z,r}}\) is harmonic in \(\B_{z,r}\) by Lemma~\ref{lem_B}, so \(\Phi_{\B_{z,r}}\) satisfies the hypothesis~\eqref{eq_hyp2} for some universal constant \(K>0\). Moreover, assuming that \(\B_{z,r}\subset\B_\rho\) for some \(\rho\in (0,1)\), we have
\begin{align*}
\int_{\B_{z,r}}\left|\Phi_{\B_{z,r}}-\Phi\right|^2&\lesssim \int_{\B_{z,r}}\left(\left|\mathcal{B}\left[\chi_{\B_{z,r}\cap\{u=0\}}\right]\right|^2+\chi_{\B_{z,r}\cap\{u=0\}}\left|\mathcal{B}\left[f(u)\chi_{\B_1}\right]\right|^2\right)\\
&\lesssim \left(1+\left\Vert \mathcal{B}\left[f(u)\chi_{\B_1}\right]\right\Vert_{L^\infty(\B_\rho)}^2\right)|\B_{z,r}\cap\{u=0\}|.
\end{align*}
Thus the hypothesis~\eqref{eq_hyp1} is satisfied by $\Phi$ in \(\B_\rho\) up to some scalar factor depending on \(\rho\). Thus by application of Theorem~\ref{th_main}, we obtain \(\Phi\in L^\infty_\loc(\B_1)\). Since \(\mathcal{B}\left[f(u)\chi_{\B_1}\right]\in L^\infty_\loc(\B_1)\), we conclude
\[4\Ho{u}=\Phi-\mathcal{B}\left[f(u)\chi_{\B_1}\right]\chi_{\B_1\cap\{u\neq 0\}}\in L^\infty_\loc(\B_1).\]
Since \(|\nabla u|^2=4|\Ho{u}|\), this concludes the proof.
\end{proof}

\section*{Acknowledgment}
GDP's research is funded by the European Research Council (ERC) through CoG 101169953 ``RISE''.\footnote{Views and opinions expressed are however those of the authors only and do not necessarily reflect those of the European Union or the European Research Council.}
MN's research is supported by the project ANR STOIQUES (ANR-24-CE40-2216) financed by the French Agence Nationale de la Recherche (ANR). No AI tools were used in the preparation of this manuscript.

% \bibliographystyle{plain}
% \bibliography{biblio}

\printbibliography

\end{document}

% Local Variables:
% jinx-local-words: "vectorial"
% End: